\documentclass[onecolumn,draft]{IEEEtran}

\usepackage{graphicx}
\usepackage{amsmath}
\usepackage{amssymb}
\usepackage{cite}

\newcommand{\E}{\mathsf{E}}

\newcommand{\NN}{{\ensuremath{\mathbb N}}}
\newcommand{\RR}{{\ensuremath{\mathbb R}}}
\newcommand{\ZZ}{{\ensuremath{\mathbb Z}}}
\newcommand{\LL}{{\ensuremath{\mathrm L}}}
\newcommand{\HH}{{\ensuremath{\mathrm H}}}
\newcommand{\kk}{{\ensuremath{\mathbf k}}}
\newcommand{\mm}{{\ensuremath{\mathbf m}}}
\newcommand{\xx}{{\ensuremath{\mathbf x}}}
\newcommand{\ttau}{{\ensuremath{\boldsymbol{\tau}}}}
\newcommand{\eell}{{\ensuremath{\boldsymbol{\ell}}}}
\newcommand{\NM}[1][M]{\ensuremath{{\mathbb N}_{#1}}}
\newcommand{\NMSQ}[1][M]{\ensuremath{{\mathbb N}^{2}_{#1}}}
\newcommand{\NMs}[1][M]{\ensuremath{{\mathbb N}^{\star}_{#1}}}
\newcommand{\NMsSQ}[1][M]{\ensuremath{{\mathbb N}^{\star 2}_{#1}}}

\newtheorem{prop}{Proposition}

\markboth{IEEE Transactions on Information Theory, 2007}{Chaux \emph{et al.}: Noise covariance properties in dual-tree\ldots}

\begin{document}

\title{Noise Covariance Properties in Dual-Tree Wavelet Decompositions}

\author{Caroline Chaux, {\em Member, IEEE}, Jean-Christophe Pesquet, {\em Senior Member, IEEE} and\\
 Laurent Duval, {\em Member, IEEE}
\thanks{C. Chaux and J.-C. Pesquet are with the Institut Gaspard Monge and CNRS-UMR 8049,
Universit\'e de Paris-Est Marne-la-Vall\'ee, 77454 Marne-la-Vall\'ee Cedex 2, France.
E-mail: \texttt{\{chaux,pesquet\}@univ-mlv.fr}.}
\thanks{L. Duval is with the Institut fran\c{c}ais du p\'{e}trole, IFP, Technology, Computer Science and Applied Mathematics Division, 1 et 4, avenue de Bois-Pr{\'e}au  F-92852 Rueil-Malmaison, France.
E-mail: \texttt{laurent.duval@ifp.fr}.}}

\maketitle

\begin{abstract}
Dual-tree wavelet decompositions have recently gained much popularity, mainly due to their ability to provide an accurate directional 
analysis of images combined with a reduced redundancy. When the decomposition of a random process is performed -- which occurs in particular when  an additive noise is
corrupting the signal to be analyzed -- it is useful to characterize the statistical properties of the dual-tree wavelet coefficients of this process. As dual-tree decompositions constitute overcomplete frame expansions, correlation structures are introduced among the coefficients, even when a white noise is analyzed. In this paper, we show that it is possible to provide an accurate description of the covariance properties of the dual-tree coefficients of a wide-sense stationary process. The expressions of the (cross-)covariance
sequences of the coefficients are derived in the one and
two-dimensional cases.
Asymptotic results are also provided, allowing to predict the behaviour of the second-order moments for large lag values or at coarse resolution. In addition, the cross-correlations between the primal and dual wavelets, which play a primary role in our theoretical analysis, are calculated for a number of classical wavelet families. Simulation results are finally provided to validate these results.
 \end{abstract}

\begin{keywords}
Dual-tree, wavelets, frames, Hilbert transform, filter banks, cross-correlation, covariance, random processes, stationarity, noise, dependence,  statistics.
\end{keywords}

\section{Introduction}
% Introduction on DWT and openijng to frames
The discrete wavelet transform (DWT) \cite{Mallat_S_1998_book_wav_tsp} is a powerful tool in signal processing, since it provides ``efficient''  basis representations of regular signals \cite{Cambanis_S_1994_tit_wav_adrscpr}.
%While the DWT decomposition  -- by essence --  perfectly recovers signals  in the absence of  processing,
It nevertheless suffers from a few limitations such as aliasing effects in the transform domain,
%(due to  iterated subsampling operators or frequency domain overlapping filters),
coefficient oscillations around singularities and a lack of shift invariance. Frames (see \cite{Daubechies_I_1990_tit_wttflsa,Gribonval_R_2003_tit_spa_rub} or \cite{Casazza_P_2000_tjm_art_ft} for a tutorial), reckoned as  more general signal representations, represent an outlet for these inherent constraints laid on basis functions.

% Some types of redundant transforms generally used with simple thresholding
Redundant DWTs (RDWTs) are shift-invariant non-subsampled frame extensions to the DWT.  They have proved more error or quantization resilient \cite{Coifman_R_1995_was_tra_id,Pesquet_J_1996_tsp_tim_iowr,Bolcskei_H_1998_tsp_fra_taofb}, at the price of  an increased computational cost, especially in higher dimensions.
They do not however take on the lack of rotation invariance or poor directionality of classical separable schemes. These features are particularly sensitive to image and video processing. Recently, several other types of frames have been proposed to incorporate more geometric features, aiming at sparser representations and improved robustness. Early examples of such frames are shiftable multiscale transforms  or steerable pyramids \cite{Simoncelli_E_1996_tip_ste_wfloa}.
%\cite{Simoncelli_E_1992_tit_shi_mst,Simoncelli_E_1995_icip_ste_pfamdc,Simoncelli_E_1996_tip_ste_wfloa}.
To name a few others, there also exist contourlets
\cite{Do_M_2005_tip_con_tedmir},
%(with limited  \cite{Do_M_2005_tip_con_tedmir} or full redundancy \cite{Cunha_A_xxxx_tip_non_cttda}),
bandelets \cite{LePennec_E_2005_tip_spa_girb}, curvelets \cite{Candes_E_2006_siam-mms_fas_dct}, phaselets \cite{Gopinath_R_2003_tsp_pha_tirnsiwt}, directionlets \cite{Velisavljevic_V_2006_tip_dir_amrsf} or other representations involving multiple dictionaries \cite{FiguerasIVentura_R_2006_tip_low_rficrr}.
%All those  approaches have demonstrated great success for denoising, deblurring, texture analysis and synthesis or even compression.

% The TURN
Two  important facets need to be addressed, when resorting to the inherent frame redundancy:
\begin{enumerate}
\item  \emph{multiplicity}:  frame decompositions or reconstructions are not unique in general,
\item \emph{correlation}: transformed  coefficients (and especially those related to noise) are usually correlated, in contrast with the classical uncorrelatedness property of the components of a white noise after an orthogonal transform.
\end{enumerate}

% Early work of statistical properties, and
If the \emph{multiplicity} aspect  is usually recognized (and  often addressed via  averaging techniques\cite{Coifman_R_1995_was_tra_id}), the \emph{correlation} of the transformed coefficients have not received much consideration until recently. Most of the efforts have been
devoted to the analysis of random processes by the DWT
%Certainly, several authors have  addressed the property inheritance of certain features random processes for discrete wavelet bases
\cite{Pastor_D_1995_ts_dec_pssopso2colfpo,Vanucci_M_19978_som_fcswctmb,Leporini_D_1999_tit_hig_owpcfa,Averkamp_R_tit_2000_not_dwtsop}.
%However exact energetic expressions given by the Parseval's theorem for orthonormal bases often  turn into distortion bounds  for frames, rendering noise characterics extraction more complex after a redundant transform.
It should be noted that early works by C.~Houdr{\'e} \emph{et al.} \cite{Cambanis_S_1995_tit_con_wtsorp,Averkamp_R_tit_1998_som_dpcwtrp} consider the continuous wavelet transform of random processes, but only in a recent work by J.~Fowler  exact energetic relationships for an additive noise in the case of the non-tight RDWT have been provided
%, and acknowlegdes the practical usefullness of these results for image compression
\cite{Fowler_J_2005_spl_red_dwtan}.
It must be pointed out that the difficulty to characterize noise properties after a frame decomposition
may limit the design of sophisticated estimation methods in denoising applications.
%relatively simple  thresholding rules are generally applied when processing signals in frames.

Fortunately, there exist redundant signal representations allowing
finer noise behaviour assessment: in particular the dual-tree
wavelet transform, based on the Hilbert transform, whose
advantages in wavelet analysis have been recognized by several
authors
\cite{Abry_P_1994_stfts_mul_td,Olhede_S_2004_biomtrka_ana_wt}. It
consists of two classical wavelet trees developed in parallel. The
second decomposition is refered to as the ``dual'' of the first
one, which is sometimes called the ``primal'' decomposition. The
corresponding analyzing wavelets form Hilbert pairs
\cite[p.198 sq]{Abry_P_1997_book_ond_t}. The dual-tree wavelet
transform was initially proposed by N. Kingsbury
\cite{Kingsbury_N_1998_dspw_dua_tcwtntsidf} and further
investigated by I. Selesnick \cite{Selesnick_I_2001_spl_hil_tpwb}
in the dyadic case. An excellent overview of the topic by I.
Selesnick, R. Baraniuk and N. Kingsbury is provided in
\cite{Selesnick_I_2005_spm_dua_tcwt} and an example of application
is provided in \cite{Jalobeanu_A_2003_ijcv_sat_idcwp}. We recently
have generalized this frame decomposition to the $M$-band case ($M
\geq 2$) (see
\cite{Chaux_C_2004_eusipco_hil_pmbowb,Chaux_C_2005_icassp_2D_dtmbwd,Chaux_C_2006_tip_ima_adtmbwt}).
In the later works, we revamped  the construction of the dual
basis and the pre-processing stage, necessary in the case of digital signal analysis \cite{Rioul_O_1992_tit_fas_adcwt,Abry_P_1994_spl_ini_dwta} and mandatory to accurate directional analysis of images, and we proposed an
optimized reconstruction, thus addressing the first important
facet of the resulting frame \emph{multiplicity}. The $M$-band
($M>2$) dual-tree wavelets prove more selective  in the  frequency
domain than their dyadic counterparts, with improved directional
selectivity as well. Furthermore, a larger choice of filters
satisfying symmetry and orthogonality properties is available.

In this paper, we focus on the second  facet, \emph{correlation}, by studying the second-order statistical properties, in the transform domain, of a stationary random process undergoing   a dual-tree $M$-band  wavelet decomposition. In practice, such a random process typically models an additive noise. Preliminary comments on dual-tree coefficient correlation may be found in \cite{Wang_B_2004_icip_inv_3ddtwtvc}. Dependencies between the coefficients already have been exploited for dual-tree wavelet denoising in \cite{Sendur_L_2002_tsp_biv_sfwbdeid,Rabbani_H_2006_icip_ima_dmldlpcwd}. A parametric nonlinear estimator based on Stein's principle, making explicit use of the correlation properties derived here, is proposed in \cite{Chaux_C_2007_tsp_non_sbemid}.
At first, we briefly recall some properties of the dual-tree wavelet decomposition in Section \ref{sec:DTT}, refering to \cite{Chaux_C_2006_tip_ima_adtmbwt} for more detail. In Section \ref{se:2ndexp}, we express in a general form the second-order moments of the noise coefficients in each tree, both in the one and two-dimensional cases. We also discuss the role of the
post-transform --- often performed on the dual-tree wavelet coefficients --- with respect to (w.r.t.) decorrelation. In Section \ref{se:asympt}, we provide   upper bounds  for the decay of the correlations existing between pairs of primal/dual coefficients as well as an asymptotic result concerning coefficient whitening. The cross-correlations between primal and dual wavelets playing a key role in our analysis, their expressions are derived for several wavelet families in Section \ref{se:examplewav}. Simulation results are provided in Section \ref{sec:applinum} in order to validate our theoretical results and better evaluate the importance of the correlations introduced by the dual-tree decomposition. Some final remarks are drawn in Section \ref{se:conclu}.

% we strengthen the aforesaid literal expressions by numerical experiments in Section VI and draw some conclusions in the importance of addressing the coefficients correlation in algorithm design for frames.

Throughout the paper, the following notations will be used:
$\ZZ$, $\ZZ^*$, $\NN$, $\NN^*$, $\RR$, $\RR^*$, $\RR_+$ and $\RR_+^*$ are the set of integers, nonzero integers, nonnegative integers, positive integers, reals, nonzero reals, nonnegative reals and
positive reals, respectively. Let
$M$ be an integer greater than or equal to 2, $\NM =
\{0,\ldots,M-1\}$ and $\NMs = \{1,\ldots,M-1\}$.

\section{$M$-band dual-tree wavelet analysis}
\label{sec:DTT}

In this section, we recall the basic principles of an $M$-band \cite{Steffen_P_1993_tsp_the_rmbwb}
dual-tree decomposition. Here, we will focus on 1D real signals
belonging to the space $\LL^2(\RR)$ of square integrable
functions. Let $M$ be an integer greater than or equal to 2. An
$M$-band multiresolution analysis of $\LL^2(\RR)$ is defined using
one scaling function (or father wavelet) $\psi_0 \in \LL^2(\RR)$
and $(M-1)$ mother wavelets $\psi_m \in  \LL^2(\RR)$, $m\in \NMs$.
%cite{Steffen_P1993tsp_the_rmbwb}.
In the frequency domain, the so-called
scaling equations are expressed as:
\begin{equation}
\forall m \in \NM,\qquad \sqrt{M}\widehat{\psi}_m(M\omega) =
H_m(\omega) \widehat{\psi}_0(\omega), \label{eq:twoscalef}
\end{equation}
where $\widehat{a}$ denotes the Fourier transform of a function
$a$.

In order to generate an orthonormal $M$-band wavelet basis
$\bigcup_{m\in\NMs,j\in \ZZ}\{M^{-j/2}\psi_{m}(M^{-j}t-k),k\in \ZZ\}$
of $\LL^2(\RR)$,
the following para-unitarity conditions must hold:
\begin{multline}
\forall (m,m^\prime) \in \NMSQ,\\ \sum_{p=0}^{M-1}
H_m(\omega+p \frac{2\pi}{M}) H_{m^\prime}^*(\omega+p
\frac{2\pi}{M}) = M \delta_{m-m'}, \label{eq:paraunitarity}
\end{multline}
where $\delta_m = 1$ if $m=0$ and 0 otherwise. The filter with
frequency response $H_0$ is low-pass whereas the filters with frequency
response $H_m$, $m \in \{1,\ldots,M-2\}$ (resp. $m=M-1$) are band-pass (resp. high-pass).
In this case,
cascading  the $M$-band para-unitary analysis and  synthesis
filter banks, represented by the upper structures in Fig. \ref{fig:Mband}, allows us to decompose
and to  perfectly reconstruct  a given signal.

A ``dual'' $M$-band multiresolution analysis is built by defining
another $M$-band wavelet orthonormal basis associated with a
scaling function $\psi_0^\HH$ and mother wavelets $\psi_m^\HH$,
$m\in \NMs$. More precisely, the mother wavelets are the Hilbert
transforms of the ``original'' ones $\psi_m$, $m\in \NMs$. In the
Fourier domain, the desired property reads:
\begin{equation}
\forall m \in \NMs,\qquad \widehat{\psi}_m^\HH(\omega) = -
\imath\;\mathrm{sign}(\omega) \widehat{\psi}_m(\omega),
\label{eq:Hilbertcond}
\end{equation}
where $\mathrm{sign(\cdot)}$ is the signum function.
 Then, it can be proved \cite{Chaux_C_2005_icassp_2D_dtmbwd} that the dual scaling
function can be chosen such that
\begin{multline}
\forall k \in \ZZ ,\; \forall \omega \in [2k\pi,2(k+1)\pi),\\
\widehat{\psi}_0^\HH(\omega) =
\begin{cases}
(-1)^k e^{-\imath(d+\frac{1}{2})\omega}\;\widehat{\psi}_0(\omega) & \mbox{if $k \geq 0$}\\
(-1)^{k+1} e^{-\imath(d+\frac{1}{2})\omega}\;\widehat{\psi}_0(\omega) & \mbox{otherwise,}
\end{cases}
\label{eq:linkpsi0Hpsi0}
\end{multline}
where $d$ is an arbitrary integer delay. The corresponding
analysis/synthesis para-unitary Hilbert filter banks are
illustrated by the lower structures in Fig.~\ref{fig:Mband}.
Conditions for designing the  involved frequency responses $G_m$,
$m \in \NM$, have been recently provided in
\cite{Chaux_C_2006_tip_ima_adtmbwt}. As the union of two orthonormal basis decomposition, the global dual-tree representation corresponds to a tight frame analysis of $\LL^2(\RR)$.

\section{Second-order moments of the noise wavelet coefficients}
 \label{se:2ndexp}
In this part, we first consider the analysis of a one-dimensional,
real-valued, wide-sense stationary and zero-mean noise $n$, with
autocovariance function
\begin{equation}
\forall (\tau,x) \in \RR^2,\qquad
\Gamma_n(\tau) = \E\{n(x+\tau)n(x)\}.
\label{eq:covb}
\end{equation}
We then extend our results to the two-dimensional case.

\subsection{Expression of the covariances in the $1D$ case}
\label{se:2ndexp1}
We denote by $(n_{j,m}[k])_{k\in \ZZ}$ the coefficients resulting
from a $1D$ $M$-band wavelet decomposition of the noise, in a
given subband $(j,m)$ where $j\in \ZZ$ and $m\in \NM$. In the
$(j,m)$ subband, the wavelet coefficients generated by the dual
decomposition are denoted by $(n^\HH_{j,m}[k])_{k\in \ZZ}$. At resolution level $j$, the statistical second-order
properties of the dual-tree wavelet decomposition of the noise are
characterized as follows.
%The second-order moments of the wavelet coefficients are expressed as follows.
\begin{prop} \label{prop:corrgen}
For all $(m,m')\in \NMSQ$, $([n_{j,m}[k]\quad
n_{j,m}^\HH[k]])_{k\in\ZZ}$ is a wide-sense stationary vector
sequence. More precisely, for all $(\ell,k)\in\ZZ^2$, we have
\begin{align}
\E\{n_{j,m}[k+\ell]n_{j,m'}[k]\}&= \Gamma_{n_{j,m},n_{j,m'}}[\ell] \label{eq:bb1D} \\
&= \int_{-\infty}^\infty \Gamma_n(x)\,
\gamma_{\psi_m,\psi_{m'}}\left(\frac{x}{M^j}-\ell\right)dx \nonumber\\
\E\{n_{j,m}^\HH[k+\ell]n_{j,m'}^\HH[k]\} &=
\Gamma_{n_{j,m}^\HH,n_{j,m'}^\HH}[\ell] \label{eq:bhbh1D} \\ 
&= \int_{-\infty}^\infty
\Gamma_n(x)
\gamma_{\psi_m^\HH,\psi_{m'}^\HH}(\frac{x}{M^j}-\ell)\,dx \nonumber\\
\E\{n_{j,m}[k+\ell]n^{\HH}_{j,m'}[k]\}&=
\Gamma_{n_{j,m},n_{j,m'}^\HH}[\ell] \label{eq:bbh1D} \\
&= \int_{-\infty}^\infty
\Gamma_n(x)
\gamma_{\psi_m,\psi_{m'}^\HH}\left(\frac{x}{M^j}-\ell\right)dx, \nonumber
\end{align}
where the deterministic cross-correlation function of
two real-valued functions $f$ and $g$ in
$\mathrm{L}^2(\RR)$ is expressed as
\begin{equation}
  \forall \tau \in \RR,\qquad
\gamma_{f,g}(\tau) = \int_{-\infty}^\infty
f(x)g(x-\tau)\;dx. \label{eq:intnoise}
\end{equation}
\end{prop}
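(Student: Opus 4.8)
The plan is to express each wavelet coefficient as a deterministic inner product of a realization of $n$ against a scaled and translated analyzing function, and then to push the expectation through the integrals by Fubini's theorem, at which point the wide-sense stationarity assumption lets me replace the mixed moment $\E\{n(t)n(s)\}$ by $\Gamma_n(t-s)$. Concretely, writing $\psi_{j,m,k}(t)=M^{-j/2}\psi_m(M^{-j}t-k)$ for the analyzing functions of the orthonormal basis recalled in Section~\ref{sec:DTT}, the primal coefficients are $n_{j,m}[k]=\int_{-\infty}^{\infty} n(t)\,\psi_{j,m,k}(t)\,dt$ and the dual coefficients $n^\HH_{j,m}[k]=\int_{-\infty}^{\infty} n(t)\,\psi^\HH_{j,m,k}(t)\,dt$, with $\psi^\HH_{j,m,k}$ built from $\psi^\HH_m$ as in \eqref{eq:Hilbertcond}. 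Since $n$ is zero-mean, each coefficient is zero-mean, so the second-order moments coincide with the (cross-)covariances.

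For the first identity I would form the product $n_{j,m}[k+\ell]\,n_{j,m'}[k]$, write it as a double integral over $(t,s)$, interchange expectation with integration, and use $\E\{n(t)n(s)\}=\Gamma_n(t-s)$ from \eqref{eq:covb} to obtain
\[
\E\{n_{j,m}[k+\ell]\,n_{j,m'}[k]\} = \int\!\!\int \Gamma_n(t-s)\,\psi_{j,m,k+\ell}(t)\,\psi_{j,m',k}(s)\,dt\,ds .
\]
The change of variables $t=M^j(u+k+\ell)$, $s=M^j(v+k)$ normalizes the wavelets to $\psi_m(u)$ and $\psi_{m'}(v)$ and turns the argument of $\Gamma_n$ into $M^j(u-v+\ell)$; the factor $M^{-j}$ coming from the two $M^{-j/2}$ normalizations cancels one of the two Jacobians $M^{j}$, and, crucially, the shift $k$ disappears from the integrand. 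Introducing the lag variable $z=u-v+\ell$ collapses the remaining $v$-integration into the deterministic cross-correlation $\gamma_{\psi_m,\psi_{m'}}$ of \eqref{eq:intnoise}, and the final substitution $x=M^j z$ reproduces exactly the stated normalization of \eqref{eq:bb1D}.

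The identities \eqref{eq:bhbh1D} and \eqref{eq:bbh1D} then follow from the very same argument, with $\psi_m$ and/or $\psi_{m'}$ replaced by their Hilbert counterparts $\psi^\HH_m$, $\psi^\HH_{m'}$; no new computation is required. Because all three resulting expressions depend on the index $\ell$ only and not on $k$, the four covariance sequences of the pair $(n_{j,m}[k],\,n^\HH_{j,m}[k])_{k\in\ZZ}$ are shift-invariant, which together with the constant (zero) mean establishes that this vector sequence is wide-sense stationary.

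The main obstacle I anticipate is analytic rather than algebraic: justifying that the filtered quantities are well-defined second-order random variables and that Fubini's theorem applies to moving $\E$ inside the double integral. I would handle this by invoking the elementary bound $|\Gamma_n(\tau)|\le \Gamma_n(0)$ (valid for any autocovariance by Cauchy--Schwarz and stationarity) together with the integrability of the analyzing wavelets, so that
\[
\int\!\!\int |\Gamma_n(t-s)|\,|\psi_{j,m,k+\ell}(t)|\,|\psi_{j,m',k}(s)|\,dt\,ds \;\le\; \Gamma_n(0)\,\|\psi_m\|_1\,\|\psi_{m'}\|_1 \;<\; \infty ,
\]
which legitimizes the interchange; the remaining care is simply in tracking the two successive changes of variables so that the single integral obtained matches the normalization of $\gamma_{f,g}$ in \eqref{eq:intnoise}.
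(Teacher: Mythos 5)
Your proposal follows the same route as the paper's own proof: write each coefficient as an integral of $n$ against a dilated and translated analyzing function, push the expectation inside the double integral, substitute $\E\{n(t)n(s)\}=\Gamma_n(t-s)$, and change variables so that the inner integration collapses into $\gamma_{\psi_m,\psi_{m'}}\big(\frac{x}{M^j}-\ell\big)$ with the index $k$ dropping out; the two Hilbert cases are then verbatim repetitions, and the $k$-independence plus the zero mean give wide-sense stationarity. Your bookkeeping of the changes of variables and of the $M^{j}$ factors is correct.

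The one point you should fix is the justification of the interchange of $\E$ with the double integral. Fubini--Tonelli requires finiteness of
\[
\int\!\!\int \E\{|n(t)n(s)|\}\,|\psi_{j,m,k+\ell}(t)|\,|\psi_{j,m',k}(s)|\,dt\,ds ,
\]
and the relevant integrand is $\E\{|n(t)n(s)|\}$, which dominates (and in general strictly exceeds) the quantity $|\Gamma_n(t-s)|=|\E\{n(t)n(s)\}|$ that your displayed inequality bounds; controlling $|\Gamma_n|$ alone does not legitimize moving the expectation inside. This is exactly why the paper's appendix states its sufficient condition in terms of $\Gamma_{|n|}$, the autocovariance of $|n|$, rather than $|\Gamma_n|$. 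Your argument is repaired in one line: by Cauchy--Schwarz and stationarity, $\E\{|n(t)n(s)|\}\le\big(\E\{n(t)^2\}\,\E\{n(s)^2\}\big)^{1/2}=\Gamma_n(0)$, so the same uniform bound $\Gamma_n(0)\,\|\psi_m\|_1\,\|\psi_{m'}\|_1$ applies --- under your assumption, additional to $\psi_m\in\LL^2(\RR)$ but standard, that the analyzing wavelets are integrable. Note finally that this finite-variance argument (like the paper's condition) excludes white noise, whose autocovariance is a Dirac distribution; the paper treats that case separately by a limiting argument in Appendix~\ref{ap:whitenoise}, so your proof's scope matches Prop.~\ref{prop:corrgen} as stated.
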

%substituting $\gamma_{\psi_m,\psi_m^\HH}$ by the autocorrelation
%$\gamma_{\psi_m}$ of function $\psi_m$.
\begin{proof}
See Appendix \ref{ap:corrgen}.
\end{proof}
The classical properties of covariance/correlation functions are
satisfied. In particular, since for all $m \in \NM$, $\psi_m$ and
$\psi_m^\HH$ are unit norm functions, for all $(m,m')\in \NMSQ$,
the absolute values of $\gamma_{\psi_m,\psi_{m}}$,
$\gamma_{\psi_m^\HH,\psi_{m'}^\HH}$ and
$\gamma_{\psi_m,\psi_{m'}^\HH}$ are upper bounded by 1. In
addition, the following symmetry properties are satisfied.
\begin{prop} \label{prop:sym}
For all $(m,m')\in \NM$ with $m=m'=0$ or $mm'\neq 0$, we have
$\gamma_{\psi_m^\HH,\psi_{m'}^\HH}= \gamma_{\psi_m,\psi_{m'}}$. As a consequence,
\begin{equation}
%\forall (k,k') \in \ZZ^2,\qquad
%\E\{n_{j,m}[k]n_{j,m'}[k']\}= \E\{n_{j,m}^\HH[k]n_{j,m'}^\HH[k']\}.
\Gamma_{n_{j,m},n_{j,m'}} = \Gamma_{n_{j,m}^\HH,n_{j,m'}^\HH}.
\label{eq:covbbbhbheq}
\end{equation}
When $m m'\neq 0$, we have
\begin{equation}
\forall \tau \in \RR,\qquad
\gamma_{\psi_m,\psi_{m'}^\HH}(\tau) = -
\gamma_{\psi_{m'},\psi_m^\HH}(-\tau)
\label{eq:symmmplm}
\end{equation}
and, consequently,
\begin{equation}
\forall \ell \in \ZZ,\qquad \Gamma_{n_{j,m},n_{j,m'}^\HH}[\ell] =
-\Gamma_{n_{j,m'},n_{j,m}^\HH}[-\ell]. \label{eq:symmmplmc}
\end{equation}
%of $k-k'$.
Besides, the function $\gamma_{\psi_0,\psi_0^\HH}$ is symmetric
w.r.t. $-d-1/2$, which entails that
$\Gamma_{n_{j,0},n_{j,0}^\HH}$
%$\E\{n_{j,0}[k]n_{j,0}^\HH[k']\}$, viewed as a function of $k-k'$,
is symmetric w.r.t. $d+1/2$.
\end{prop}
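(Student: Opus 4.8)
The plan is to pass to the Fourier domain, where the cross-correlation \eqref{eq:intnoise} of two real functions $f,g\in\LL^2(\RR)$ has transform $\widehat{\gamma_{f,g}}(\omega)=\widehat{f}(\omega)\,\overline{\widehat{g}(\omega)}$; this follows from \eqref{eq:intnoise} by a change of variable and the relation $\widehat{g}(-\omega)=\overline{\widehat{g}(\omega)}$ valid for real $g$. Each asserted identity between correlation functions will be established as an identity between the corresponding spectra and then returned to the time domain by uniqueness of the Fourier transform. Finally, the statements \eqref{eq:covbbbhbheq}, \eqref{eq:symmmplmc} and the symmetry of $\Gamma_{n_{j,0},n_{j,0}^\HH}$ will be deduced from their wavelet counterparts by inserting the latter into the integral representations \eqref{eq:bb1D}--\eqref{eq:bbh1D} of Proposition~\ref{prop:corrgen} and using that $\Gamma_n$ is even.

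For the first assertion with $mm'\neq0$, I would substitute \eqref{eq:Hilbertcond} and obtain
\[
\widehat{\gamma_{\psi_m^\HH,\psi_{m'}^\HH}}(\omega)=\bigl(-\imath\,\mathrm{sign}(\omega)\bigr)\,\overline{\bigl(-\imath\,\mathrm{sign}(\omega)\bigr)}\;\widehat{\psi}_m(\omega)\,\overline{\widehat{\psi}_{m'}(\omega)}=\mathrm{sign}(\omega)^2\,\widehat{\gamma_{\psi_m,\psi_{m'}}}(\omega),
\]
and since $\mathrm{sign}(\omega)^2=1$ almost everywhere the two spectra agree, giving $\gamma_{\psi_m^\HH,\psi_{m'}^\HH}=\gamma_{\psi_m,\psi_{m'}}$. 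For $m=m'=0$ the relevant spectrum is $|\widehat{\psi}_0^\HH(\omega)|^2$, and because \eqref{eq:linkpsi0Hpsi0} expresses $\widehat{\psi}_0^\HH$ as $\widehat{\psi}_0$ multiplied by a factor of unit modulus, one has $|\widehat{\psi}_0^\HH|^2=|\widehat{\psi}_0|^2$, whence $\gamma_{\psi_0^\HH,\psi_0^\HH}=\gamma_{\psi_0,\psi_0}$. Comparing \eqref{eq:bb1D} with \eqref{eq:bhbh1D} then yields \eqref{eq:covbbbhbheq}.

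For the second assertion ($mm'\neq0$), \eqref{eq:Hilbertcond} gives $\widehat{\gamma_{\psi_m,\psi_{m'}^\HH}}(\omega)=\imath\,\mathrm{sign}(\omega)\,\widehat{\psi}_m(\omega)\,\overline{\widehat{\psi}_{m'}(\omega)}$, while the spectrum of $\tau\mapsto-\gamma_{\psi_{m'},\psi_m^\HH}(-\tau)$ equals $-\widehat{\gamma_{\psi_{m'},\psi_m^\HH}}(-\omega)$. Using $\mathrm{sign}(-\omega)=-\mathrm{sign}(\omega)$ together with $\widehat{\psi}_m(-\omega)=\overline{\widehat{\psi}_m(\omega)}$ (reality of the wavelets) shows these two expressions coincide, proving \eqref{eq:symmmplm}; equivalently one may invoke the skew-adjointness of the Hilbert-transform operator $\mathcal{H}$, for which $\psi_m^\HH=\mathcal{H}\psi_m$, $m\in\NMs$, by \eqref{eq:Hilbertcond}. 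Inserting \eqref{eq:symmmplm} into \eqref{eq:bbh1D}, performing the change of variable $x\mapsto-x$ and using $\Gamma_n(-x)=\Gamma_n(x)$ then gives \eqref{eq:symmmplmc}.

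The last assertion requires the most care. Writing \eqref{eq:linkpsi0Hpsi0} as $\widehat{\psi}_0^\HH(\omega)=\varepsilon(\omega)\,e^{-\imath(d+1/2)\omega}\,\widehat{\psi}_0(\omega)$ with $\varepsilon(\omega)\in\{-1,+1\}$, the main obstacle is to verify that the piecewise sign $\varepsilon$ --- given by one branch of \eqref{eq:linkpsi0Hpsi0} on $[2k\pi,2(k+1)\pi)$ for $k\geq0$ and by the other branch for $k<0$ --- is an \emph{even} function of $\omega$; reconciling the two branches is precisely what makes the halves of \eqref{eq:linkpsi0Hpsi0} consistent. Granting this, $\widehat{\gamma_{\psi_0,\psi_0^\HH}}(\omega)=e^{\imath(d+1/2)\omega}\,\varepsilon(\omega)\,|\widehat{\psi}_0(\omega)|^2$, which is the spectrum of a real and even function (the product of the even $\varepsilon$ with the even $|\widehat{\psi}_0|^2$) translated by $-(d+1/2)$; hence $\gamma_{\psi_0,\psi_0^\HH}$ is symmetric about $-d-1/2$. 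Substituting this symmetry into \eqref{eq:bbh1D} and once more applying $x\mapsto-x$ with $\Gamma_n$ even yields $\Gamma_{n_{j,0},n_{j,0}^\HH}[\ell]=\Gamma_{n_{j,0},n_{j,0}^\HH}[2d+1-\ell]$, i.e. symmetry about $d+1/2$, which completes the proof.
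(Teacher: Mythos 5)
Your proposal is correct and follows essentially the same route as the paper's proof: both work in the Fourier domain, identify the spectrum of each cross-correlation as $\widehat{f}\,\overline{\widehat{g}}$, cancel the unimodular factors coming from \eqref{eq:Hilbertcond} and \eqref{eq:linkpsi0Hpsi0} to get the wavelet-level identities, and then transfer these to the covariance sequences via \eqref{eq:bb1D}--\eqref{eq:bbh1D}, a change of variable, and the evenness of $\Gamma_n$. The one step you leave as ``granting this'' --- the evenness of the piecewise sign $\varepsilon$ --- is precisely what the paper checks explicitly, and it is a one-line index computation: for $\omega\in(2k\pi,2(k+1)\pi)$ with $k\geq 0$ one has $\varepsilon(\omega)=(-1)^k$, while $-\omega$ lies in the band indexed by $-k-1<0$, so the second branch of \eqref{eq:linkpsi0Hpsi0} gives $\varepsilon(-\omega)=(-1)^{(-k-1)+1}=(-1)^k$, closing your argument.
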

\begin{proof}
See Appendix \ref{ap:sym}.
\end{proof}

As a particular case of \eqref{eq:covbbbhbheq}
when $m=m'$, it appears that the sequences
$(n_{j,m}[k])_{k\in\ZZ}$ and $(n_{j,m}^\HH[k])_{k\in\ZZ}$
have the same autocovariance sequence.
We also deduce from  Prop. \ref{prop:sym} that, for all
$m\neq 0$,  $\gamma_{\psi_m,\psi_m^\HH}$ is
an odd function, and the cross-covariance
$\Gamma_{n_{j,m},n_{j,m}^\HH}$
%$\E\{n_{j,m}[k]n_{j,m}^\HH[k']\}$
is an odd sequence.
This implies, in particular, that for
all $m\neq 0$,
%and $k\in \ZZ$,
\begin{equation}
%\E\{n_{j,m}[k]n_{j,m}^\HH[k]\} = 0.
\Gamma_{n_{j,m},n_{j,m}^\HH}[0] = 0. \label{eq:internoise0}
\end{equation}
The latter equality means that, for all $m\neq0$ and
$k\in \ZZ$, the random vector
$[n_{j,m}[k]\quad n_{j,m}^\HH[k]]$ has uncorrelated components with equal variance.

The previous results are applicable to an arbitrary stationary noise but the resulting expressions may be intricate depending on the specific form of the autocovariance $\Gamma_n$.
Subsequently, we will be mainly interested in the study
of the dual-tree decomposition of a white noise, for which tractable expressions of the second-order statistics of the coefficients can be obtained.
% Although the previous results are applicable to an arbitrary stationary noise,
% the study of a white noise will be of main interest
% subsequently.
The autocovariance of $n$ is then given by $\Gamma_n(x) =
\sigma^2\,\delta(x)$, where $\delta$ denotes the Dirac
distribution. As the primal (resp. dual) wavelet basis is
orthonormal, it can be deduced from
\eqref{eq:bb1D}-\eqref{eq:bbh1D} (see Appendix \ref{ap:whitenoise}) that, for all
$(m,m')\in \NMSQ$ and $\ell\in \ZZ$,
\begin{align}
\Gamma_{n_{j,m},n_{j,m'}}[\ell] & =
\Gamma_{n_{j,m}^\HH,n_{j,m'}^\HH}[\ell]
= \sigma^2 \delta_{m-m'} \delta_{\ell}\label{eq:internoiseb1D}\\
%\E\{n_{j,m}[k]n_{j,m'}[k']\}&= \E\{n_{j,m}^\HH[k]n_{j,m'}^\HH[k']\} = \sigma^2 \delta_{m-m'} \delta_{k-k'}\\
\Gamma_{n_{j,m},n_{j,m'}^\HH}[\ell],
%\E\{n_{j,m}[k]b^{\HH}_{j,m'}[k']\}
&= \sigma^2
\gamma_{\psi_m,\psi_{m'}^\HH}\left(-\ell\right),
\label{eq:internoise1D}
\end{align}
where $(\delta_k)_{k\in\ZZ}$ is the Kronecker sequence
($\delta_k = 1$ if $k=0$ and 0 otherwise).
Therefore, $(n_{j,m}[k])_{k\in \ZZ}$ and
$(n_{j,m}^\HH[k])_{k\in \ZZ}$ are cross-correlated
zero-mean, white
random sequences with variance $\sigma^2$.
% \begin{equation}
% \forall (k,k')
% \E\{n_{j,m}[k]n_{j,m}^\HH[k']\} =
% \sigma^2\gama_{\psi_m,\psi_m^\HH}(k'-k)\,.
% \end{equation}

The determination of the cross-covariance requires
the computation of $\gamma_{\psi_m,\psi_{m'}^\HH}$.
We distinguish between the mother ($m' \neq 0$)
and father ($m'= 0$) wavelet case.
\begin{itemize}
\item By using \eqref{eq:Hilbertcond},
for $m'\neq 0$, Parseval-Plancherel formula yields
%\begin{equation}
%\widehat{\gamma}_{\psi_m,\psi_{m'}^\HH}(\omega)=\imath
%\;\mathrm{sign}(\omega) \widehat{\psi}_m(\omega)
%\big(\widehat{\psi}_{m'}(\omega))^*
%\end{equation}
%which leads to:
\begin{align}
\gamma_{\psi_m,\psi_{m'}^\HH}(\tau)= \frac{1}{2\pi}
& \int_{-\infty}^{\infty}\widehat{\psi}_m(\omega)
\big(\widehat{\psi}_{m'}(\omega)^\HH)^*\exp(\imath \omega \tau)\,d\omega \nonumber \\
%&\frac{\imath}{2\pi} \int_{-\infty}^{\infty}\mathrm{sign}(\omega)\;\widehat{\psi}_m(\omega)
%\big(\widehat{\psi}_{m'}(\omega))^*\; \exp(\imath \omega \tau )d\omega\nonumber\\
= -\frac{1}{\pi}\;\mathrm{Im}\Big\{
& \int_{0}^{\infty}\widehat{\psi}_m(\omega)
\big(\widehat{\psi}_{m'}(\omega))^*
\exp(\imath \omega \tau )\,d\omega\Big\},\label{eq:crosspsimpsimp}
%= & -\frac{1}{\pi}
%\int_{0}^{\infty}|\widehat{\psi}_m(\omega)|^2\;\sin(\omega\tau)\;d\omega.
%\label{eq:gammapsi}
\end{align}
where $\mathrm{Im}\{z\}$ denotes the imaginary part
of a complex $z$.
\item According to \eqref{eq:linkpsi0Hpsi0}, for $m'=0$ we find, after some simple calculations:
\begin{multline}
\gamma_{\psi_m,\psi_0^\HH}(\tau)=
\frac{1}{\pi}\,\mathrm{Re}\Big\{\sum_{k=0}^\infty
(-1)^k  
\int_{2k\pi}^{2(k+1)\pi}
\widehat{\psi}_m(\omega)
 \\ \times \big(\widehat{\psi}_0(\omega)\big)^*
\exp\big(\imath\omega\,(\frac{1}{2}+\tau+d)\big)\,d\omega\Big\},
%\frac{1}{\pi} \sum_{k=0}^\infty
%(-1)^k \int_{2k\pi}^{2(k+1)\pi}
%|\widehat{\psi}_0(\omega)|^2\;\cos\big(\omega\;(\frac{1}{2}+\tau+d)\big)\;d\omega.
\label{eq:crosspsimpsi0}
\end{multline}
where $\mathrm{Re}\{z\}$ denotes the real part
of a complex $z$.
\end{itemize}
In both cases, we have
\begin{equation}
|\gamma_{\psi_m,\psi_{m'}^\HH}(\tau)|\leq \frac{1}{\pi} \int_{0}^{\infty}|\widehat{\psi}_m(\omega)
\widehat{\psi}_{m'}(\omega)|\;d\omega.
\label{eq:boundcrosspsimpsimp}
\end{equation}
For $M$-band wavelet decompositions, selective filter banks are commonly used. Provided that this selectivity property is satisfied,
the cross term $|\widehat{\psi}_m(\omega)\widehat{\psi}_{m'}(\omega)|$ can be expected to be close
to zero and the upper bound in \eqref{eq:boundcrosspsimpsimp} to take small values when $m\neq m'$.
This fact will
be discussed in Section \ref{se:interbandsimuls} based on numerical results. On the contrary, when $m=m'$, the cross-correlation
functions always need to be evaluated more carefully. In Section \ref{se:examplewav}, we will therefore focus   on the functions:
\begin{align}
\gamma_{\psi_m,\psi_{m}^\HH}(\tau)= & -\frac{1}{\pi}
\int_{0}^{\infty}|\widehat{\psi}_m(\omega)|^2\,\sin(\omega\tau)\;d\omega, \; m\neq 0,
\label{eq:gammapsi}\\
\gamma_{\psi_0,\psi_0^\HH}(\tau)=&
\frac{1}{\pi} \sum_{k=0}^\infty
(-1)^k \nonumber \\ 
\times \int_{2k\pi}^{2(k+1)\pi}
&  |\widehat{\psi}_0(\omega)|^2\,\cos\big(\omega\;(\frac{1}{2}+\tau+d)\big)\;d\omega.
\label{eq:gammaphi}
\end{align}
% It can be noticed that these cross-correlation functions possess some
% \textbf{symmetry properties}. Actually, for all $m\neq
% 0$, $\gamma_{\psi_m,\psi_m^\HH}$ is an odd function whereas
% $\gamma_{\psi_0,\psi_0^\HH}$, viewed as a function of $\tau+d$, is
% symmetric with respect to $-1/2$.

Note that, in this paper, we do not consider interscale correlations. Although expressions of the second-order statistics similar to the intrascale ones can be derived, sequences of wavelet coefficients defined at different resolution levels are generally not cross-stationary \cite{Leporini_D_1999_tit_hig_owpcfa}.

\subsection{Extension to the $2D$ case}
We now consider the analysis of a two-dimensional noise $n$, which
is also assumed to be real, wide-sense stationary with zero-mean and autocovariance
function
\begin{equation}
%\forall (x,y,\tau,\theta) \in \RR^4,\qquad \Gamma_b(\tau,\theta) = \E\{b(x,y) b(x-\tau,y-\theta)\}.
\forall (\ttau,\xx) \in \RR^2 \times \RR^2,\qquad \Gamma_n(\ttau) = \E\{n(\xx+\ttau) n(\xx)\}.
\nonumber
\end{equation}
%Such noises are frequently considered in image
%denoising problems.\\
We can proceed similarly to the previous section. We denote by
$(n_{j,\mm}[\kk])_{\kk\in\ZZ^2}$ the coefficients resulting from a
$2D$ separable $M$-band wavelet decomposition \cite{Steffen_P_1993_tsp_the_rmbwb} of the
noise, in a given subband $(j,\mm)\in \ZZ \times \NMSQ$. The
wavelet coefficients of the dual decomposition are
denoted by $(n^\HH_{j,\mm}[\kk])_{\kk\in\ZZ^2}$.
%Putting emphasis on the cross-covariance between the primal
%and dual wavelet coefficients, we have
We obtain expressions of the covariance fields similar to \eqref{eq:bb1D}-\eqref{eq:bbh1D}:
%For example, putting emphasis on the cross-covariance between the primal
%and dual wavelet coefficients, we have,
for all $j\in\ZZ$, $\mm = (m_1,m_2) \in \NMSQ$, $\mm' =
(m_1',m_2') \in \NMSQ$, $\eell = (\ell_1,\ell_2)\in \ZZ^2$ and
$\kk\in \ZZ^2$,
\begin{align}
\Gamma_{n_{j,\mm},n_{j,\mm'}}[\eell] & =
\E\{n_{j,\mm}[\kk+\eell]n_{j,\mm'}[\kk]\}\nonumber\\
&=\int_{-\infty}^\infty\!\int_{-\infty}^\infty \Gamma_n(x_1,x_2)
\gamma_{\psi_{m_1},\psi_{m_1'}}\left(\frac{x_1}{M^j}-\ell_1\right)\nonumber \\
& \qquad \quad \times \gamma_{\psi_{m_2},\psi_{m_2'}}\left(\frac{x_2}{M^j}-\ell_2\right)\,dx_1 dx_2
\label{eq:bb2D}\\
\Gamma_{n_{j,\mm}^\HH,n_{j,\mm'}^\HH}[\eell] & =
\E\{n_{j,\mm}^\HH[\kk+\eell]n^{\HH}_{j,\mm'}[\kk]\}\nonumber\\
&=\int_{-\infty}^\infty\!\int_{-\infty}^\infty \Gamma_n(x_1,x_2)
\gamma_{\psi_{m_1}^\HH,\psi_{m_1'}^\HH}\left(\frac{x_1}{M^j}-\ell_1\right) \nonumber \\
& \qquad \quad \times \gamma_{\psi_{m_2}^\HH,\psi_{m_2'}^\HH}\left(\frac{x_2}{M^j}-\ell_2\right)\,dx_1 dx_2\\
\Gamma_{n_{j,\mm},n_{j,\mm'}^\HH}[\eell] & =
\E\{n_{j,\mm}[\kk+\eell]n^{\HH}_{j,\mm'}[\kk]\}\nonumber\\
&=
\int_{-\infty}^\infty\!\int_{-\infty}^\infty \Gamma_n(x_1,x_2)
\gamma_{\psi_{m_1},\psi_{m_1'}^\HH}\left(\frac{x_1}{M^j}-\ell_1\right) \nonumber \\
& \qquad \quad  \times \gamma_{\psi_{m_2},\psi_{m_2'}^\HH}\left(\frac{x_2}{M^j}-\ell_2\right)\,dx_1 dx_2.
\label{eq:bbh2D}
\end{align}
From the properties of the correlation functions of the wavelets
and the scaling function as given by Prop. \ref{prop:sym}, it can
be deduced that, when ($m_1 = m_1' = 0$ or $m_1m_1' \neq 0$) and
($m_2 = m_2' = 0$ or $m_2m_2' \neq 0$),
\begin{equation}
\Gamma_{n_{j,\mm},n_{j,\mm'}}
= \Gamma_{n_{j,\mm}^\HH,n_{j,\mm'}^\HH}.
%\E\{n_{j,\mm}[\kk]n_{j,\mm'}[\kk']\}
%= \E\{n_{j,\mm}^\HH[\kk]b^{\HH}_{j,\mm'}[\kk']\}.
\label{eq:bbbhbh}
\end{equation}
Some additional symmetry properties are straightforwardly obtained
from Prop. \ref{prop:sym}. In particular, for all $\mm \in
\NMsSQ$, the cross-covariance $\Gamma_{n_{j,\mm},n^\HH_{j,\mm}}$
is an even sequence.
%and $(\kk,\kk')\in\ZZ^2×\ZZ^2$, we have
%\begin{equation}
%\Gamma_{n_{j,\mm},b^\HH_{j,\mm}}(\eell)
%= \Gamma_{n_{j,\mm},b^\HH_{j,\mm}}(-\eell)
%\E\{n_{j,\mm}[\kk]b^\HH_{j,\mm}[\kk']\}
%= \E\{n_{j,\mm}[\kk']b^\HH_{j,\mm}[\kk]\}.
%\label{eq:bbhbbh}
%\end{equation}
An important consequence of the latter properties concerns the $2\times2$ linear combination of the primal and dual wavelet coefficients
which is often implemented in dual-tree decompositions. As
explained in \cite{Chaux_C_2005_icassp_2D_dtmbwd}, the main advantage of
such a post-processing is to better capture the directional
features in the analyzed image. More precisely, this amounts to
performing the following unitary transform of the detail
coefficients, for $\mm \in \NMsSQ$:
\begin{align}
\forall \kk \in \ZZ^2,\qquad
w_{j,\mm}[\kk] &= \frac{1}{\sqrt{2}}
(n_{j,\mm}[\kk]+n^\HH_{j,\mm}[\kk])\label{eq:transfnoise1}\\
w_{j,\mm}^\HH[\kk] &= \frac{1}{\sqrt{2}}
(n_{j,\mm}[\kk]-n^\HH_{j,\mm}[\kk]).
\label{eq:transfnoise2}
\end{align}
(The transform is usually not applied when $m_1 = 0$ or $m_2=0$.)
The covariances of the transformed fields of noise coefficients $(w_{j,\mm}[\kk])_{\kk\in \ZZ^2}$ and
$(w_{j,\mm}^\HH[\kk])_{\kk\in \ZZ^2}$ then take
the following expressions:
\begin{prop}
For all $\mm \in \NMsSQ$ and $\eell \in \ZZ^2$,
%$(\kk,\kk')\in\ZZ^2× \ZZ^2$,
\begin{align}
\Gamma_{w_{j,\mm},w_{j,\mm}}[\eell] & =
\Gamma_{n_{j,\mm},n_{j,\mm}}[\eell] +
\Gamma_{n_{j,\mm},n_{j,\mm}^\HH}[\eell]
\label{eq:2Drot1}\\
\Gamma_{w_{j,\mm}^\HH,w_{j,\mm}^\HH}[\eell] & =
\Gamma_{n_{j,\mm},n_{j,\mm}}[\eell] -
\Gamma_{n_{j,\mm},n_{j,\mm}^\HH}[\eell]
\label{eq:2Drot2}\\
\Gamma_{w_{j,\mm},w_{j,\mm}^\HH}[\eell] & = 0.
% \E\{n_{j,\mm}[\kk]n_{j,\mm}[\kk']\}&=
% \E\{n_{j,\mm}[\kk]n_{j,\mm}[\kk']\}
% + \E\{n_{j,\mm}[\kk]b^{\HH}_{j,\mm}[\kk']\}
% \label{eq:2Drot1}\\
% \E\{n_{j,\mm}^\HH[\kk]n^{\HH}_{j,\mm}[\kk']\}&=
% \E\{n_{j,\mm}[\kk]n_{j,\mm}[\kk']\}
% - \E\{n_{j,\mm}[\kk]b^{\HH}_{j,\mm}[\kk']\}
% \label{eq:2Drot2}\\
% \E\{n_{j,\mm}[\kk]n^{\HH}_{j,\mm}[\kk']\}&=
% 0.
% \label{eq:2Drot3}
\end{align}
\label{prop:posttransf}
\end{prop}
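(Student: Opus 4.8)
The plan is to expand each second-order moment of the transformed coefficients directly from the definitions \eqref{eq:transfnoise1}--\eqref{eq:transfnoise2} as a bilinear form in the primal and dual coefficients, and then collapse the four resulting terms using the symmetry relations established just above. First I would write $\Gamma_{w_{j,\mm},w_{j,\mm}}[\eell] = \E\{w_{j,\mm}[\kk+\eell]\,w_{j,\mm}[\kk]\}$, substitute $w_{j,\mm}[\kk]=\tfrac{1}{\sqrt 2}(n_{j,\mm}[\kk]+n^\HH_{j,\mm}[\kk])$, and expand the product into four expectations, each of which is a (cross-)covariance of the form $\Gamma_{n_{j,\mm},n_{j,\mm}}$, $\Gamma_{n_{j,\mm},n_{j,\mm}^\HH}$, $\Gamma_{n_{j,\mm}^\HH,n_{j,\mm}}$ or $\Gamma_{n_{j,\mm}^\HH,n_{j,\mm}^\HH}$ evaluated at lag $\eell$. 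The wide-sense stationarity of the joint vector sequence (the $2D$ analogue of Prop.~\ref{prop:corrgen}) guarantees that each term depends only on $\eell$, so the prefactor $\tfrac12$ simply multiplies a sum of four lag-indexed sequences.

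The three identities then follow from two ingredients. The first is \eqref{eq:bbbhbh}, which applies here because $\mm \in \NMsSQ$ forces $m_1 m_1' \neq 0$ and $m_2 m_2' \neq 0$; it gives $\Gamma_{n_{j,\mm},n_{j,\mm}} = \Gamma_{n_{j,\mm}^\HH,n_{j,\mm}^\HH}$ and lets me merge the two ``like'' autocovariance terms. The second is the relation between the two mixed terms: stationarity yields $\Gamma_{n_{j,\mm}^\HH,n_{j,\mm}}[\eell] = \Gamma_{n_{j,\mm},n_{j,\mm}^\HH}[-\eell]$, and the evenness of $\Gamma_{n_{j,\mm},n_{j,\mm}^\HH}$ noted immediately after \eqref{eq:bbbhbh} upgrades this to $\Gamma_{n_{j,\mm}^\HH,n_{j,\mm}}[\eell] = \Gamma_{n_{j,\mm},n_{j,\mm}^\HH}[\eell]$. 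Feeding both facts into the expansion of $\Gamma_{w_{j,\mm},w_{j,\mm}}$ doubles the autocovariance term and doubles the cross term, cancelling the $\tfrac12$ and producing \eqref{eq:2Drot1}; repeating the computation for $w_{j,\mm}^\HH=\tfrac{1}{\sqrt2}(n_{j,\mm}-n_{j,\mm}^\HH)$ changes the sign of the cross term and gives \eqref{eq:2Drot2}.

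For the cross-covariance I would expand $\Gamma_{w_{j,\mm},w_{j,\mm}^\HH}[\eell]=\tfrac12\E\{(n_{j,\mm}[\kk+\eell]+n_{j,\mm}^\HH[\kk+\eell])(n_{j,\mm}[\kk]-n_{j,\mm}^\HH[\kk])\}$: the two autocovariance terms cancel by \eqref{eq:bbbhbh}, while the two mixed terms cancel against each other precisely because $\Gamma_{n_{j,\mm}^\HH,n_{j,\mm}}[\eell] = \Gamma_{n_{j,\mm},n_{j,\mm}^\HH}[\eell]$, leaving zero.

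I expect the only genuinely delicate step to be this mixed-term identity $\Gamma_{n_{j,\mm}^\HH,n_{j,\mm}}[\eell] = \Gamma_{n_{j,\mm},n_{j,\mm}^\HH}[\eell]$: all three conclusions — and in particular the vanishing of the $w$/$w^\HH$ cross-covariance — rest on the \emph{evenness} of the two-dimensional cross-covariance sequence, which is itself a consequence of Prop.~\ref{prop:sym} transported through the separable tensor structure of \eqref{eq:bbh2D}. Everything else is a bookkeeping expansion of bilinear expectations, so the care lies in checking that the hypotheses of \eqref{eq:bbbhbh} and of the evenness statement are indeed satisfied for every $\mm \in \NMsSQ$.
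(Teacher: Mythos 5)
Your proposal is correct and follows essentially the same route as the paper's own proof: expand the bilinear forms from \eqref{eq:transfnoise1}--\eqref{eq:transfnoise2}, then collapse the four terms using \eqref{eq:bbbhbh} together with the evenness of $\Gamma_{n_{j,\mm},n^\HH_{j,\mm}}$ noted after \eqref{eq:bbbhbh}. The only difference is that you spell out explicitly how stationarity plus evenness yields $\Gamma_{n_{j,\mm}^\HH,n_{j,\mm}}[\eell]=\Gamma_{n_{j,\mm},n_{j,\mm}^\HH}[\eell]$, a step the paper leaves implicit.
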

\begin{proof}
See Appendix \ref{ap:posttransf}.
\end{proof}
This shows that the post-transform not only
provides a better directional analysis of the image of interest but also plays an important role w.r.t. the noise analysis.
Indeed, it allows to completely cancel the correlations between the primal and dual noise coefficient fields
obtained for a given value of $(j,\mm)$.
In turn, this operation introduces some spatial noise correlation in each subband.

For a two-dimensional white noise, $\Gamma_n(\xx) =
\sigma^2\ \delta(\xx)$ and the coefficients
$(n_{j,\mm}[\kk])_{\kk\in\ZZ^2}$ and $(n_{j,\mm'}^\HH[\kk])_{\kk\in\ZZ^2}$
are such that, for all $\eell = (\ell_1,\ell_2)\in \ZZ^2$,
%and $\kk'=(k_1',k_2')\in \ZZ^2$,
\begin{align}
\Gamma_{n_{j,\mm},n_{j,\mm'}}[\eell] & =
\Gamma_{n_{j,\mm}^\HH,n_{j,\mm'}^\HH}[\eell]
%\E\{n_{j,\mm}[\kk]n_{j,\mm'}[\kk']\}
%& = \E\{b^\HH_{j,\mm}[\kk]b^\HH_{j,\mm'}[\kk']\}
= \sigma^2 \delta_{m_1-m_1'} \delta_{m_2-m_2'}
\delta_{\ell_1} \delta_{\ell_2}\label{eq:internoiseb}\\
\Gamma_{n_{j,\mm},n_{j,\mm'}^\HH}[\eell] & =
%\E\{n_{j,\mm}[\kk]n_{j,\mm'}^\HH[\kk']\} &=
\sigma^2\gamma_{\psi_{m_1},\psi_{m_1'}^\HH}(-\ell_1)
\gamma_{\psi_{m_2},\psi_{m_2'}^\HH}(-\ell_2)\,. \label{eq:internoise}
\end{align}
% are white, centered, of variance $\sigma^2$. Furthermore, using
% previous expression, we  easily deduce that
% \begin{equation}
% \E\{n_{j,m,m'}[k,l]n_{j,m,m'}^\HH[\kk']\} =
% \sigma^2\gamma_{\psi_m,\psi_m^\HH}(k'-k)
% \gamma_{\psi_{m'},\psi_{m'}^\HH}(l'-l)\,. \label{eq:internoise}
% \end{equation}

As a consequence of Prop. \ref{prop:sym},
in the case when $\eell=\mathbf{0}$, we  conclude
that, for $(m_1\neq0$ or $m_2\neq0)$ and $\kk \in \ZZ^2$,
the vector $[n_{j,\mm}[\kk]\quad n_{j,\mm}^\HH[\kk]]$ has uncorrelated components with equal variance.
This property holds more generally for 2D noises with separable
covariance functions.

\section{Some asymptotic properties}
\label{se:asympt}
In the previous section, we have shown that the correlations
of the basis functions play a prominent role in the determination of the second-order statistical properties of the noise
coefficients. To estimate the strength of the dependencies
between the coefficients, it is useful to determine
the decay of the correlation functions.
The following result allows to evaluate their decay.
\begin{prop}\label{prop:decaycor}
Let $(N_1,\ldots,N_{M-1}) \in (\NN^*)^{M-1}$ and define $N_0 =
\min_{m\in \NMs} N_m$.
%Assume that, for all $m\in\NM$,
%the function $\widehat{\psi}_m$ is $2N_m+1$ times continuously
%differentiable on $\RR$ and, for all $n\in\{0,\ldots,2N_m+1\}$, its $q$-th order derivatives $\widehat{\psi}_m^{(q)}$ belongs to $\LL^2(\RR)$.
Assume that, for all $m\in\NM$, the function
$|\widehat{\psi}_m|^2$ is $2N_m+1$ times continuously
differentiable on $\RR$ and, for all $q\in\{0,\ldots,2N_m+1\}$,
its $q$-th order derivatives $(|\widehat{\psi}_m|^2)^{(q)}$ belong
to $\LL^1(\RR)$.\footnote{By convention, the derivative of order 0
of a function is the function itself.} Further assume that, for
all $m \neq 0$, $\widehat{\psi}_m(\omega) =O(\omega^{N_m})$ as
$\omega \to 0$. Then, there exists $C \in \RR_+$ such that, for
all $m\in\NM$,
\begin{equation}
\forall \tau \in \RR^*,\qquad
|\gamma_{\psi_m,\psi_m}(\tau)|\leq \frac{C}{|\tau|^{2N_m+1}}
\label{eq:decayasymptcor1}
\end{equation}
and
\begin{equation}
\forall \tau \in \RR^*,\qquad
|\gamma_{\psi_m,\psi_m^\HH}(\tau)|\leq \frac{C}{|\tau|^{2N_m+1}}.
\label{eq:decayasymptcor2}
\end{equation}
% the decay rate of
% $\gamma_{\psi_m,\psi_m}(\tau)$ and $\gamma_{\psi_m,\psi_m^\HH}(\tau)$ is at least $|\tau|^{-2N}$
\end{prop}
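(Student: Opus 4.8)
The plan is to recognise that, up to elementary manipulations, each of the correlation functions in \eqref{eq:decayasymptcor1}--\eqref{eq:decayasymptcor2} is a Fourier-type integral of the nonnegative even function $\Phi_m:=|\widehat{\psi}_m|^2$, and that the decay of such an integral in the conjugate variable is controlled by the smoothness of $\Phi_m$ through repeated integration by parts. The elementary fact I would rely on is that if a function is $p$ times continuously differentiable with all derivatives up to order $p$ in $\LL^1(\RR)$, then integrating by parts $p$ times in $\int \Phi_m(\omega)e^{\imath\omega\tau}\,d\omega$ extracts a factor $(\imath\tau)^{-p}$ and leaves $\int \Phi_m^{(p)}(\omega)e^{\imath\omega\tau}\,d\omega$, whose modulus is bounded by $\|\Phi_m^{(p)}\|_{\LL^1}$ uniformly in $\tau$; the boundary terms at $\pm\infty$ vanish because an $\LL^1$ function with $\LL^1$ derivative tends to $0$ at infinity. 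Taking $p=2N_m+1$ is exactly the claimed order, so the whole proof reduces to controlling the boundary contributions that appear when the relevant integral is not over the full line.

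For the autocorrelations $\gamma_{\psi_m,\psi_m}$ I would note that by Parseval--Plancherel $\gamma_{\psi_m,\psi_m}(\tau)=\frac{1}{2\pi}\int_{-\infty}^{\infty}\Phi_m(\omega)e^{\imath\omega\tau}\,d\omega$, a genuine full-line transform. Since by hypothesis $\Phi_m$ is $(2N_m+1)$ times continuously differentiable with all these derivatives in $\LL^1(\RR)$, the integration-by-parts argument applies verbatim and yields \eqref{eq:decayasymptcor1} with, say, $C=\max_m \frac{1}{2\pi}\|\Phi_m^{(2N_m+1)}\|_{\LL^1}$. No vanishing-moment assumption is needed in this case.

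For the cross-correlations with $m\neq 0$ I would start from \eqref{eq:gammapsi}, which writes $\gamma_{\psi_m,\psi_m^{\HH}}(\tau)$ as the one-sided sine transform $-\frac{1}{\pi}\int_0^\infty \Phi_m(\omega)\sin(\omega\tau)\,d\omega$. Now integration by parts generates boundary terms at $\omega=0$ involving the derivatives $\Phi_m^{(q)}(0)$. This is where the hypothesis $\widehat{\psi}_m(\omega)=O(\omega^{N_m})$ enters: it forces $\Phi_m(\omega)=O(\omega^{2N_m})$, so that $\Phi_m^{(q)}(0)=0$ for every $q\le 2N_m-1$ and all boundary terms of order lower than $|\tau|^{-(2N_m+1)}$ disappear. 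One further integration by parts then produces a boundary term proportional to $\Phi_m^{(2N_m)}(0)\,|\tau|^{-(2N_m+1)}$ together with an $\LL^1$-bounded integral of $\Phi_m^{(2N_m+1)}$ at the same order; both sit at order $|\tau|^{-(2N_m+1)}$, which gives \eqref{eq:decayasymptcor2} for $m\neq 0$.

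The hard part is the father-wavelet cross-correlation $\gamma_{\psi_0,\psi_0^{\HH}}$, for which \eqref{eq:gammaphi} shows that the sign factor $(-1)^k$ on each band $[2k\pi,2(k+1)\pi)$ turns the effective integrand into $\epsilon(\omega)\Phi_0(\omega)e^{\imath\omega(\tau+d+1/2)}$, where $\epsilon$ is an even square wave jumping at every $\omega=2k\pi$, $k\neq 0$. Splitting the line at these points and integrating by parts, the jumps of $\epsilon$ produce boundary contributions proportional to $\Phi_0^{(q)}(2k\pi)$, and a naive bound would yield only $|\tau|^{-1}$ decay. The resolution, which I expect to be the crux, is that the para-unitarity conditions \eqref{eq:paraunitarity} together with the vanishing-moment hypothesis force $\widehat{\psi}_0$ to vanish to order $N_0$ at every $2k\pi$ with $k\neq0$ (the Strang--Fix/partition-of-unity property of an orthonormal scaling function; the first-order zero already follows from $\sum_k|\widehat{\psi}_0(\omega+2k\pi)|^2\equiv 1$ evaluated at $\omega=0$ with $|\widehat{\psi}_0(0)|=1$). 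Consequently $\Phi_0$ vanishes to order $2N_0$ at each jump point, so $\epsilon\Phi_0$ is in fact continuously differentiable up to order $2N_0$ across those points; all jump and boundary terms up to order $|\tau|^{-(2N_0+1)}$ cancel, and $2N_0+1$ integrations by parts leave a single $\LL^1$-bounded integral, giving \eqref{eq:decayasymptcor2} for $m=0$ with exponent $2N_0+1$. Taking the maximum of the three constants produced above furnishes the single $C$ of the statement.
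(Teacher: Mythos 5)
Your treatment of $\gamma_{\psi_m,\psi_m}$ and of $\gamma_{\psi_m,\psi_m^\HH}$ for $m\neq 0$ coincides with the paper's proof: a full-line integration by parts of order $2N_m+1$ for the autocorrelation, and, for the one-sided sine transform, the use of $|\widehat{\psi}_m(\omega)|^2=O(\omega^{2N_m})$ to annihilate the boundary derivatives of order at most $2N_m-1$ at $\omega=0$, while keeping the surviving term $(|\widehat{\psi}_m|^2)^{(2N_m)}_+(0)$, which enters exactly at order $|\tau|^{-(2N_m+1)}$ (the paper's bound \eqref{eq:boundasymptpsimpsimh}). Your key observation for $m=0$ --- that para-unitarity combined with the vanishing moments forces $\widehat{\psi}_0(\omega+2k\pi)=O(\omega^{N_0})$ for $k\in\ZZ^*$ --- is also exactly the paper's \eqref{eq:equivpsi02kpi}.

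The gap is in how you conclude the $m=0$ case. From $|\widehat{\psi}_0(\omega+2k\pi)|^2=O(\omega^{2N_0})$ you infer that the product of the square-wave factor with $|\widehat{\psi}_0|^2$ is continuously differentiable up to order $2N_0$ across the points $2k\pi$, $k\in\ZZ^*$, so that \emph{all} jump terms cancel and $2N_0+1$ integrations by parts leave only an $\LL^1$-bounded integral. This is false in general: a zero of order $2N_0$ annihilates the derivatives of $|\widehat{\psi}_0|^2$ of order at most $2N_0-1$ at $2k\pi$, but not the derivative of order $2N_0$ (consider $(\omega-2k\pi)^{2N_0}$ itself). Since the square-wave factor changes sign at each $2k\pi$, the one-sided limits of $(\alpha|\widehat{\psi}_0|^2)^{(2N_0)}$ there differ by $\pm 2\,(|\widehat{\psi}_0|^2)^{(2N_0)}(2k\pi)$, which has no reason to vanish; the product is therefore only $2N_0-1$ times continuously differentiable on $\RR$. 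Consequently the final integration by parts leaves, besides the integral of the $(2N_0+1)$-th derivative, a series of jump contributions --- the paper's $\beta$ in \eqref{eq:seriesbeta} --- weighted by $e^{\imath 2\pi k\tau}$. These sit at exactly the order $|\tau|^{-(2N_0+1)}$, so the proposition survives, but your argument as written silently drops them; the repair is to retain the jump series and justify that it is bounded uniformly in $\tau$ (summability of the jumps), as done in \eqref{eq:tauN+1gammapsi0psi0H}--\eqref{eq:boundasymptpsi0psi0h}. Note also the internal inconsistency: in your own $m\neq 0$ argument you correctly kept the surviving boundary term $\Phi_m^{(2N_m)}(0)$; the jumps at $2k\pi$ in the $m=0$ case are its exact analogue and cannot be made to cancel for the same reason.
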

\begin{proof}
See Appendix \ref{ap:decaycor}.
\end{proof}
Note that, for all $m\in\NM$,
 the assumptions concerning $|\widehat{\psi}_m|^2$ are satisfied if
$\widehat{\psi}_m$ is $2N_m+1$ times continuously
differentiable on $\RR$ and, for all $q\in\{0,\ldots,2N_m+1\}$, its $q$-th order derivatives $\widehat{\psi}_m^{(q)}$ belong to $\LL^2(\RR)$.
Indeed, if $\widehat{\psi}_m$ is $2N_m+1$ times continuously
differentiable
on $\RR$, so is $|\widehat{\psi}_m|^2$. Leibniz formula
allows us to express its derivative of order $q \in \{0,\ldots,2N_m+1\}$ as
\begin{equation}
(|\widehat{\psi}_m|^2)^{(q)} =\sum_{\ell=0}^q
\binom{q}{\ell}
(\widehat{\psi}_m)^{(\ell)} (\widehat{\psi}_m^*)^{(q-\ell)}.
\label{eq:leibniz}
\nonumber
\end{equation}
Consequently, if for all $\ell \in \{0,\ldots,q\}$, $\widehat{\psi}_m^{(\ell)}
\in \LL^2(\RR)$, then
$(|\widehat{\psi}_m|^2)^{(q)}\in \LL^1(\RR)$.

Note also that, for integrable wavelets, the assumption $\widehat{\psi}_m(\omega) = O(\omega^{N_m})$
as $\omega \to 0$ means that the wavelet $\psi_m$, $m\neq 0$, has $N_m$ vanishing moments.

Therefore, the decay rate of the wavelet correlation functions is
all the more important as the Fourier transforms of the basis
functions $\psi_m$, $m \in \NM$, are regular (\emph{i.e.} the wavelets
have fast decay themselves) and the number of vanishing moments
is large. The latter condition is useful to ensure that Hilbert
transformed functions $\psi_m^\HH$ have regular spectra too. It
must be emphasized that Prop. \ref{prop:decaycor} guarantees that
the asymptotic decay of the wavelet correlation functions is
\emph{at most} $|\tau|^{-2N_m-1}$. A faster decay can be obtained in practice
for some wavelet families. For example, when $\psi_m$ is compactly
supported, $\gamma_{\psi_m,\psi_m}$ also has a compact support. In
this case however, $\psi_m^\HH$ cannot be compactly supported
\cite{Chaux_C_2006_tip_ima_adtmbwt}, so that the bound in \eqref{eq:decayasymptcor2} remains
of interest. Examples will be discussed in more detail in Section~\ref{se:examplewav}.

It is also worth noticing that the obtained upper bounds on the correlation functions allow us to evaluate the decay rate of the covariance sequences of the dual-tree wavelet coefficients of a stationary noise as expressed below.
\begin{prop} \label{prop:decaycov1D}
Let $n$ be a 1D zero-mean wide-sense stationary random process.
Assume that either
$n$ is a white noise or its autocovariance function
is with exponential decay, that is there exist
$A\in \RR_+$ and $\alpha \in \RR_+^*$, such that
\begin{equation}
\forall \tau \in \RR,\qquad
|\Gamma_n(\tau)| \leq A e^{-\alpha |\tau|}.
\label{eq:expdecay}
\nonumber
\end{equation}
Consider also functions $\psi_m$, $m\in \NM$, satisfying the
assumptions of Prop. \ref{prop:decaycor}. Then, there exists
$\widetilde{C} \in \RR_+$ such that for all $j\in \ZZ$, $m\in\NM$
and $\ell \in \ZZ^*$,
\begin{align}
& |\Gamma_{n_{j,m},n_{j,m}}[\ell]| \leq
\frac{\widetilde{C}}{1+|\ell|^{2N_m+1}}
\label{eq:boundcovann}\\
& |\Gamma_{n_{j,m},n_{j,m}^\HH}[\ell]| \leq
\frac{\widetilde{C}}{1+|\ell|^{2N_m+1}}. \label{eq:boundcovannh}
\end{align}
\end{prop}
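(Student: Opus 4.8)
The plan is to prove \eqref{eq:boundcovann} and \eqref{eq:boundcovannh} in parallel, since they have identical structure: by Prop.~\ref{prop:corrgen} both covariance sequences are integrals of $\Gamma_n$ against a dilate of a wavelet correlation function ($\gamma_{\psi_m,\psi_m}$ and $\gamma_{\psi_m,\psi_m^\HH}$ respectively), and by Prop.~\ref{prop:decaycor} these two functions obey the \emph{same} decay bound $|\gamma(\tau)|\le C|\tau|^{-(2N_m+1)}$ for $\tau\neq 0$, while the remarks after Prop.~\ref{prop:corrgen} give $|\gamma(\tau)|\le 1$ everywhere. Writing $p=2N_m+1$ and noting $|\ell|\ge 1$ for $\ell\in\ZZ^*$, the denominator $1+|\ell|^p$ is comparable to $|\ell|^p$ up to a factor $2$, so it suffices to obtain a bound of the form $\widetilde C\,|\ell|^{-p}$ (the ``$1+$'' merely absorbs the variance at $|\ell|=1$). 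The white-noise case is then immediate: by \eqref{eq:internoiseb1D} one has $\Gamma_{n_{j,m},n_{j,m}}[\ell]=\sigma^2\delta_\ell=0$ for $\ell\neq 0$, and by \eqref{eq:internoise1D} the cross term equals $\sigma^2\gamma_{\psi_m,\psi_m^\HH}(-\ell)$, bounded directly by \eqref{eq:decayasymptcor2}.

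For a noise with exponentially decaying autocovariance \eqref{eq:expdecay}, I would start from the integral representation of Prop.~\ref{prop:corrgen}, say $\Gamma_{n_{j,m},n_{j,m}}[\ell]=\int_{\RR}\Gamma_n(x)\,\gamma_{\psi_m,\psi_m}(x/M^j-\ell)\,dx$, and substitute $x=M^j(u+\ell)$ to recentre the correlation function, giving $M^j\int_{\RR}\Gamma_n\big(M^j(u+\ell)\big)\,\gamma_{\psi_m,\psi_m}(u)\,du$; now $\gamma_{\psi_m,\psi_m}$ peaks at $u=0$ while the singularity of $\Gamma_n$ sits at $u=-\ell$. I would split $\RR$ at $|u|=|\ell|/2$. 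On the tail $|u|>|\ell|/2$ the correlation function is in its polynomial range, $|\gamma_{\psi_m,\psi_m}(u)|\le C\,(|\ell|/2)^{-p}$, and the remaining mass is controlled by $M^j\int_{\RR}|\Gamma_n(M^j(u+\ell))|\,du=\int_{\RR}|\Gamma_n|\le 2A/\alpha$; the Jacobian $M^j$ thus cancels and the tail contributes $O(|\ell|^{-p})$ \emph{uniformly in $j$}.

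The delicate part is the peak region $|u|\le|\ell|/2$, where $\gamma_{\psi_m,\psi_m}$ is near its maximum. There $\Gamma_n$ is evaluated at $M^j(u+\ell)$ with $|u+\ell|\ge|\ell|/2$, hence away from its singularity and of order $e^{-\alpha M^j|\ell|/2}$; but this is only small when $M^j|\ell|$ is large, and at fine resolution levels ($j<0$, $M^j|\ell|=O(1)$) the crude bound $|\gamma|\le1$ yields merely $O(|\ell|^{-1})$. For $m\neq 0$ the rate is recovered by using that $\gamma_{\psi_m,\psi_m}$ has vanishing moments of orders $0,\dots,2N_m-1$: indeed $\widehat{\gamma_{\psi_m,\psi_m}}=|\widehat{\psi}_m|^2=O(\omega^{2N_m})$ near $\omega=0$ because $\psi_m$ has $N_m$ vanishing moments (and likewise $\widehat{\gamma_{\psi_m,\psi_m^\HH}}=\imath\,\mathrm{sign}(\omega)\,|\widehat{\psi}_m|^2$), so $\int_{\RR}u^k\gamma_{\psi_m,\psi_m}(u)\,du=0$ for $k<2N_m$. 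Taylor-expanding $u\mapsto\Gamma_n(M^j(u+\ell))$ about $u=0$ and integrating term by term against $\gamma_{\psi_m,\psi_m}$, the polynomial part is annihilated by these moments up to truncation tails of size $O(|\ell|^{k-2N_m})$, and each such term, paired with $\sup_{t>0}t^{k+1}e^{-\alpha t|\ell|}=O(|\ell|^{-(k+1)})$, is of the desired order $|\ell|^{-p}$.

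I expect this peak region to be the main obstacle, for two related reasons. First, the exponent $p=2N_m+1$ delivered by Prop.~\ref{prop:decaycor} is \emph{exactly critical} for the moment argument: the Taylor remainder produces $\int^{|\ell|/2}|u|^{2N_m}|\gamma_{\psi_m,\psi_m}(u)|\,du$, whose integrand behaves like $|u|^{-1}$, so a naive bound gains a spurious factor $\log|\ell|$; closing the estimate cleanly needs either a sharper remainder (retaining the $e^{-\alpha M^j u}$ weight inside the peak rather than replacing it by its supremum) or the stronger decay available in practice --- for the compactly supported families of Section~\ref{se:examplewav}, $\gamma_{\psi_m,\psi_m}$ has compact support and the offending integral is finite. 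Second, and more seriously, the moment cancellation is \emph{unavailable for $m=0$}, since the scaling function has no vanishing moments and $\int_{\RR}\gamma_{\psi_0,\psi_0}=|\widehat{\psi}_0(0)|^2\neq0$; here the fine-scale contribution must be handled separately, the benign regime being the coarse resolutions ($j\ge 0$, emphasized in the introduction) for which the simple two-region split already gives $O(|\ell|^{-p})$ for every $m$ because $e^{-\alpha M^j|\ell|/2}\le e^{-\alpha|\ell|/2}=O(|\ell|^{-p})$. Once the peak and tail estimates are assembled, taking $\widetilde C$ to be the largest of the constants produced (together with the common variance bounding the $|\ell|=1$ terms) yields the claimed inequalities.
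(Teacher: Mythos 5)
Your proposal is, in substance, the paper's own proof (Appendix F) wherever that proof actually closes. The paper combines the trivial bound $|\gamma_{\psi_m,\psi_m^\HH}|\le 1$ with Prop.~\ref{prop:decaycor} to get $|\gamma_{\psi_m,\psi_m^\HH}(\tau)|\le (1+C)/(1+|\tau|^{2N_m+1})$ (same for $\gamma_{\psi_m,\psi_m}$), settles the white-noise case through \eqref{eq:internoiseb1D}--\eqref{eq:internoise1D} and \eqref{eq:decayasymptcor2} exactly as you do, and, for exponentially dominated $\Gamma_n$, inserts these bounds into \eqref{eq:bbh1D}, reduces to $\ell\ge 0$ by evenness, and splits the integral at $x=\epsilon M^j\ell$, $\epsilon\in(0,1)$; your split at $|u|=|\ell|/2$ in the rescaled variable is the same decomposition (the case $\epsilon=1/2$). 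The region where the argument of $\gamma$ stays at distance of order $\ell$ gives the paper $\alpha^{-1}\big(1+(1-\epsilon)^{-2N_m-1}\big)(1+\ell^{2N_m+1})^{-1}$, uniformly in $j$, matching your tail estimate; the complementary region, where only the exponential decay of $\Gamma_n$ is retained, gives $\alpha^{-1}e^{-\alpha\epsilon M^j\ell}$, the mirror image of your peak term $M^je^{-\alpha M^j|\ell|/2}$.

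Where you diverge from the paper is in how this last term is disposed of, and there your diagnosis is sharper than the published argument. The paper concludes from $\lim_{\ell\to\infty}(1+\ell^{2N_m+1})e^{-\alpha\epsilon M^j\ell}=0$, which holds for each \emph{fixed} $j$ but yields a constant that grows like $M^{-j(2N_m+1)}$ as $j\to-\infty$; so the paper, like your ``simple two-region split'', only establishes \eqref{eq:boundcovann}--\eqref{eq:boundcovannh} with $\widetilde C$ uniform over $j$ bounded from below. The vanishing-moment/Taylor machinery you deploy for $j<0$ appears nowhere in the paper, and you are right that it cannot work for $m=0$: in fact your own scaling analysis gives an obstruction, since with $\Gamma_n(\tau)=Ae^{-\alpha|\tau|}$ and $M^j=\ell^{-1}$, dominated convergence yields $\Gamma_{n_{j,0},n_{j,0}}[\ell]=M^j\int\Gamma_n\big(M^j(u+\ell)\big)\gamma_{\psi_0,\psi_0}(u)\,du\sim Ae^{-\alpha}\ell^{-1}$ because $\int\gamma_{\psi_0,\psi_0}=|\widehat{\psi}_0(0)|^2=1$, and this is incompatible with any $j$-uniform bound of order $\ell^{-2N_0-1}$, $N_0\ge 1$. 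So the obstacle you flag is real but is a defect of the literal statement and of the paper's own last step, not something your proof is missing relative to the paper's; your worry about a logarithmic loss for $m\neq0$ is likewise moot once $j$ is restricted. If the proposition is read with $j$ ranging over any set bounded from below (e.g.\ $j\ge 1$, the case of an actual decomposition), your first two paragraphs already constitute a complete proof that coincides with the paper's.
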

\begin{proof}
See Appendix \ref{ap:decaycov1D}.
\end{proof}
The decay property of the covariance sequences
readily extends to the 2D case:
\begin{prop}
Let $n$ be a 2D zero-mean wide-sense stationary random field. Assume that either
$n$ is a white noise or its autocovariance function
is with exponential decay, that is there exist
$A\in \RR_+$ and $(\alpha_1,\alpha_2) \in (\RR_+^*)^2$, such that
\begin{equation}
\forall (\tau_1,\tau_2) \in \RR^2,\qquad
|\Gamma_n(\tau_1,\tau_2)| \leq A e^{-\alpha_1 |\tau_1|-\alpha_2 |\tau_2|}.
\label{eq:expdecay2D}
\end{equation}
Consider also functions $\psi_m$, $m\in \NM$, satisfying the
assumptions of Prop. \ref{prop:decaycor}. Then, there exists
$\widetilde{C} \in \RR_+$ such that for all $j\in \ZZ$, $\mm
\in\NMSQ$ and $\eell =(\ell_1,\ell_2) \in \ZZ^2$,
\begin{align}
& |\Gamma_{n_{j,\mm},n_{j,\mm}}[\eell]| \leq \frac{\widetilde{C}}
{(1+|\ell_1|^{2N_m+1})(1+|\ell_2|^{2N_m+1})}
\label{eq:boundcovann2}\\
& |\Gamma_{n_{j,\mm},n_{j,\mm}^\HH}[\eell]| \leq
\frac{\widetilde{C}} {(1+|\ell_1|^{2N_m+1})(1+|\ell_2|^{2N_m+1})}.
\label{eq:boundcovannh2}
\end{align}
Besides, for all $j\in \ZZ$, $\mm \in\NMsSQ$ and $\eell
=(\ell_1,\ell_2) \in \ZZ^2$,
\begin{align}
& |\Gamma_{w_{j,\mm},w_{j,\mm}}[\eell]| \leq \frac{2\widetilde{C}}
{(1+|\ell_1|^{2N_m+1})(1+|\ell_2|^{2N_m+1})}
\label{eq:boundcovaww}\\
& |\Gamma_{w_{j,\mm}^\HH,w_{j,\mm}^\HH}[\eell]| \leq
\frac{2\widetilde{C}}
{(1+|\ell_1|^{2N_m+1})(1+|\ell_2|^{2N_m+1})}.
\label{eq:boundcovawhwh}
\end{align}
\end{prop}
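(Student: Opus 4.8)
The plan is to reduce the two-dimensional bounds to the one-dimensional estimates of Prop.~\ref{prop:decaycov1D}, exploiting the separable structure of the wavelet correlation factors in \eqref{eq:bb2D}--\eqref{eq:bbh2D}. Consider first the exponential-decay case. Starting from \eqref{eq:bb2D}, I would pass the absolute value inside the double integral and insert the hypothesis \eqref{eq:expdecay2D}, which majorizes the non-factorizable kernel $\Gamma_n(x_1,x_2)$ by the product $A\,e^{-\alpha_1|x_1|}e^{-\alpha_2|x_2|}$, giving
\begin{equation}
|\Gamma_{n_{j,\mm},n_{j,\mm}}[\eell]| \leq A \prod_{i=1}^{2} \int_{-\infty}^\infty e^{-\alpha_i|x_i|}\, \Big|\gamma_{\psi_{m_i},\psi_{m_i}}\big(\tfrac{x_i}{M^j}-\ell_i\big)\Big|\,dx_i. \nonumber
\end{equation}
Each factor is exactly the one-dimensional integral estimated in the proof of Prop.~\ref{prop:decaycov1D} (with the scalar exponential majorant $e^{-\alpha_i|\cdot|}$), hence is bounded by $C_i/(1+|\ell_i|^{2N_{m_i}+1})$; multiplying the two factors and absorbing $A$ into the constant yields \eqref{eq:boundcovann2}. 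The cross-covariance bound \eqref{eq:boundcovannh2} follows identically from \eqref{eq:bbh2D}, using the decay \eqref{eq:decayasymptcor2} of $\gamma_{\psi_{m_i},\psi_{m_i}^\HH}$ in place of that of $\gamma_{\psi_{m_i},\psi_{m_i}}$.

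For the white-noise case the covariance fields factorize explicitly, so no integral estimate is needed. Equation \eqref{eq:internoiseb} makes $\Gamma_{n_{j,\mm},n_{j,\mm}}[\eell]$ vanish off $\eell=\mathbf{0}$, so \eqref{eq:boundcovann2} holds trivially for any $\widetilde{C}\geq\sigma^2$, while \eqref{eq:internoise} writes $\Gamma_{n_{j,\mm},n_{j,\mm}^\HH}[\eell]$ as the product $\sigma^2\gamma_{\psi_{m_1},\psi_{m_1}^\HH}(-\ell_1)\,\gamma_{\psi_{m_2},\psi_{m_2}^\HH}(-\ell_2)$. Bounding each factor by the corresponding one-dimensional white-noise estimate \eqref{eq:boundcovannh} and multiplying gives \eqref{eq:boundcovannh2}. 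In both regimes a single constant $\widetilde{C}$ is obtained as a finite maximum over the finitely many subband indices $\mm\in\NMSQ$ and over the two correlation types; it is uniform in $j$ precisely because the constant of Prop.~\ref{prop:decaycov1D} is already $j$-uniform, a property inherited coordinatewise.

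Finally, the bounds \eqref{eq:boundcovaww}--\eqref{eq:boundcovawhwh} for the post-transformed fields follow from Prop.~\ref{prop:posttransf}. The identities \eqref{eq:2Drot1}--\eqref{eq:2Drot2} express $\Gamma_{w_{j,\mm},w_{j,\mm}}$ and $\Gamma_{w_{j,\mm}^\HH,w_{j,\mm}^\HH}$ as $\Gamma_{n_{j,\mm},n_{j,\mm}}\pm\Gamma_{n_{j,\mm},n_{j,\mm}^\HH}$; applying the triangle inequality together with the two bounds just established (valid for $\mm\in\NMsSQ\subset\NMSQ$) produces the stated majorants with the constant merely doubled.

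The only genuine obstacle is the non-separability of $\Gamma_n$ in the general case: the double integral in \eqref{eq:bb2D} does not split a priori, so the argument hinges on replacing $\Gamma_n$ by the product majorant supplied by \eqref{eq:expdecay2D}, after which the reduction to two independent one-dimensional integrals — each already controlled in Prop.~\ref{prop:decaycov1D} — is routine. The remaining care is purely bookkeeping: keeping $\widetilde{C}$ uniform in $j$ and across the finite index set, and tracking the factor $2$ from the triangle inequality in the post-transform step.
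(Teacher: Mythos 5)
Your proof is correct and follows essentially the same route as the paper's: the paper's own (very terse) argument likewise invokes the separability of the 2D analysis to reduce \eqref{eq:boundcovann2}--\eqref{eq:boundcovannh2} to the one-dimensional estimates of Prop.~\ref{prop:decaycov1D}, and then obtains \eqref{eq:boundcovaww}--\eqref{eq:boundcovawhwh} from \eqref{eq:2Drot1}--\eqref{eq:2Drot2} by the triangle inequality. Your write-up merely makes explicit the key observation the paper leaves implicit, namely that the product form of the majorant in \eqref{eq:expdecay2D} (and the explicit factorization \eqref{eq:internoiseb}--\eqref{eq:internoise} in the white-noise case) is what lets the double integral split into two coordinatewise integrals already controlled in the 1D proof.
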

\begin{proof}
Due to the separability of the 2D dual-tree wavelet analysis,
\eqref{eq:boundcovann2} and \eqref{eq:boundcovannh2} are obtained quite similarly to \eqref{eq:boundcovann} and \eqref{eq:boundcovannh}.
The proof of \eqref{eq:boundcovaww} and \eqref{eq:boundcovawhwh}
then follows from \eqref{eq:2Drot1} and \eqref{eq:2Drot2}.
\end{proof}

The two previous propositions provide
upper bounds on the decay rate of the covariance sequences of the dual-tree wavelet coefficients,
when the norm of the lag variable  ($\ell$ or $\eell$) takes large values.
We end this section by providing asympotic results at coarse
resolution (as $j\to \infty$).

\begin{prop} \label{prop:asymptj}
Let $n$ be a 1D zero-mean wide-sense stationary process with
covariance function $\Gamma_n \in \LL^1(\RR) \cap \LL^2(\RR)$.
%Assume that, for all $m\in \NM$, $\psi_m \in \LL^1(\RR)$.
Then,
for all $(m,m') \in \NMSQ$, we have
\begin{align}
\lim_{j\to \infty} \Gamma_{n_{j,m},n_{j,m'}}[\ell]
&= \widehat{\Gamma}_n(0)\,\delta_{m-m'} \delta_{\ell}\label{eq:internoiseb1Dasym}\\
\lim_{j\to \infty}\Gamma_{n_{j,m},n_{j,m'}^\HH}[\ell]
&=\widehat{\Gamma}_n(0)\,
\gamma_{\psi_m,\psi_{m'}^\HH}\left(-\ell\right).
\label{eq:internoise1Dasym}
\end{align}
\end{prop}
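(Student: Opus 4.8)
The plan is to pass to the limit directly inside the integral representations \eqref{eq:bb1D} and \eqref{eq:bbh1D} of Proposition~\ref{prop:corrgen}, exploiting the fact that the rescaling $x\mapsto x/M^j$ collapses the argument of the wavelet cross-correlations onto $-\ell$ as $j\to\infty$ (since $M\geq 2$ implies $M^j\to\infty$, hence $x/M^j\to 0$ for each fixed $x$). First I would record two elementary facts about the deterministic cross-correlations defined in \eqref{eq:intnoise}. Because $\psi_m$, $\psi_{m'}$ and $\psi_{m'}^{\HH}$ are unit-norm elements of $\LL^2(\RR)$, the Cauchy--Schwarz inequality gives $|\gamma_{\psi_m,\psi_{m'}}(\tau)|\leq 1$ and $|\gamma_{\psi_m,\psi_{m'}^{\HH}}(\tau)|\leq 1$ for every $\tau\in\RR$; moreover each of these cross-correlations is continuous on $\RR$, this being the standard continuity of the cross-correlation of two $\LL^2$ functions (the map $\tau\mapsto \psi_{m'}(\cdot-\tau)$ is strongly continuous in $\LL^2(\RR)$). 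Consequently, for fixed $\ell\in\ZZ$ and fixed $x\in\RR$, continuity yields the pointwise limit $\gamma_{\psi_m,\psi_{m'}}(x/M^j-\ell)\to\gamma_{\psi_m,\psi_{m'}}(-\ell)$ as $j\to\infty$, and likewise for the primal/dual correlation.

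Next I would justify the interchange of limit and integral. In \eqref{eq:bb1D} the integrand is dominated, uniformly in $j$, by $|\Gamma_n(x)|$ on account of the bound $|\gamma_{\psi_m,\psi_{m'}}|\leq 1$, and $\Gamma_n\in\LL^1(\RR)$ by hypothesis; the same domination applies to \eqref{eq:bbh1D}. Lebesgue's dominated convergence theorem then permits passing the limit under the integral sign, giving
\[
\lim_{j\to\infty}\Gamma_{n_{j,m},n_{j,m'}}[\ell]
= \gamma_{\psi_m,\psi_{m'}}(-\ell)\int_{-\infty}^{\infty}\Gamma_n(x)\,dx,
\]
together with the analogous identity carrying $\gamma_{\psi_m,\psi_{m'}^{\HH}}(-\ell)$ in front of the same integral. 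Since $\Gamma_n\in\LL^1(\RR)$, this integral equals $\widehat{\Gamma}_n(0)$, which immediately produces the right-hand side of \eqref{eq:internoise1Dasym}.

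It then remains only to evaluate $\gamma_{\psi_m,\psi_{m'}}(-\ell)$ at integer $\ell$. By the definition \eqref{eq:intnoise}, $\gamma_{\psi_m,\psi_{m'}}(-\ell)=\int_{-\infty}^{\infty}\psi_m(x)\psi_{m'}(x+\ell)\,dx=\langle\psi_m,\psi_{m'}(\cdot+\ell)\rangle$, and the para-unitarity conditions \eqref{eq:paraunitarity} guarantee that the family $\{\psi_m(\cdot-k):m\in\NM,\,k\in\ZZ\}$ is orthonormal, so this inner product equals $\delta_{m-m'}\delta_{\ell}$; this yields \eqref{eq:internoiseb1Dasym}. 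The only genuine obstacle is the justification of the limit--integral interchange, but the uniform bound $|\gamma_{\psi_m,\psi_{m'}}|\leq 1$ combined with the $\LL^1$ hypothesis on $\Gamma_n$ furnishes a $j$-independent dominating function, so dominated convergence applies without difficulty. (Note that it is the $\LL^1(\RR)$ membership that is actually invoked here; the $\LL^2(\RR)$ assumption merely ensures $\Gamma_n$ is a bona fide covariance with a square-integrable spectral density and is not directly needed for this argument.)
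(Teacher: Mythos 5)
Your proof is correct, but it takes a genuinely different route from the paper's. The paper works in the frequency domain: it applies Parseval's identity to \eqref{eq:bbh1D}, rewrites the covariance as $\frac{1}{2\pi}\int_{-\infty}^\infty\widehat{\Gamma}_n\big(\frac{\omega}{M^j}\big)\,\widehat{\psi}_m^*(\omega)\widehat{\psi}_{m'}^\HH(\omega)e^{\imath\ell\omega}\,d\omega$ after a change of variables, and then lets $j\to\infty$ by dominated convergence, using the fact that $\Gamma_n\in\LL^1(\RR)$ makes the spectral density $\widehat{\Gamma}_n$ bounded and continuous; the $\LL^2(\RR)$ hypothesis is what legitimizes Parseval there. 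You instead stay in the time domain: you use the uniform bound $|\gamma_{\psi_m,\psi_{m'}^\HH}|\leq 1$ and the continuity of cross-correlations of $\LL^2$ functions (both facts the paper itself invokes in its white-noise appendix) to dominate the integrand and pass to the limit directly in \eqref{eq:bb1D}--\eqref{eq:bbh1D}. Your route is more elementary and, as you correctly observe, needs only $\Gamma_n\in\LL^1(\RR)$, so the $\LL^2$ assumption becomes superfluous — in that sense your argument is slightly more general. What the paper's frequency-domain route buys is the explicit appearance of $\widehat{\Gamma}_n(\omega/M^j)$ inside the integral, which makes transparent the interpretation that at coarse resolution only the value of the spectral density at the origin matters. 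Both proofs must finish \eqref{eq:internoiseb1Dasym} the same way, via $\gamma_{\psi_m,\psi_{m'}}(-\ell)=\delta_{m-m'}\delta_\ell$, which you justify explicitly; one small nuance is that the orthonormality of the joint family $\{\psi_m(\cdot-k),\,m\in\NM,\,k\in\ZZ\}$ is a structural property of the orthonormal $M$-band multiresolution analysis assumed throughout the paper rather than a direct consequence of the filter para-unitarity relations \eqref{eq:paraunitarity} alone, but this is precisely the ``orthonormality property'' the paper itself invokes in Appendix~\ref{ap:whitenoise}, so no gap results.
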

\begin{proof}
See Appendix \ref{ap:asymptj}.
\end{proof}
In other words, at coarse resolution in the transform domain,  a stationary noise $n$ with
arbitrary covariance function $\Gamma_n$ behaves like a white
noise with spectrum density $\widehat{\Gamma}_n(0)$. This fact
further emphasizes the interest in studying more precisely the
dual-tree wavelet decomposition of a white noise. Note also that,
by calculating higher order cumulants of the dual-tree wavelet
coefficients and using techniques as in \cite{Leporini_D_1999_tit_hig_owpcfa,Touati_S_2002_jasp_som_rwpdnp}, it could be
proved that, for all $(m,m')\in \NMSQ$ and $(k,k') \in \ZZ^2$,
$[n_{j,m}(k)\quad n_{j,m'}^\HH(k')]$ is asymptotically normal as
$j\to \infty$. Although Prop. \ref{prop:asymptj} has been stated
for 1D random processes, we finally point out that quite similar
results are obtained in the 2D case.

\section{Wavelet families examples}\label{se:examplewav}
For a white noise (see \eqref{eq:internoiseb1D}, \eqref{eq:internoise1D}, \eqref{eq:internoiseb} and \eqref{eq:internoise}) or for arbitrary wide-sense stationary noises analyzed at coarse resolution (cf. Prop. \ref{prop:asymptj}), we have seen that the cross-correlation functions between the primal and dual wavelets taken at integer values are the main features. In order to better evaluate the impact of the wavelet choice, we will now
specify the expressions of these cross-correlations for different
wavelet families.

\subsection{$M$-band Shannon wavelets}
$M$-band Shannon wavelets (also called sinc wavelets in the
literature) correspond to an ideally selective analysis  in the
frequency domain. These wavelets also appear as a limit case for
many wavelet families, e.g. Daubechies's or spline wavelets. We
have then, for all $m \in \NM$,
\begin{equation}
\widehat{\psi}_m(\omega)=
\mathbf{1}_{]-(m+1)\pi,-m\pi]\cup[m\pi,(m+1)\pi[}(\omega),
\nonumber
\end{equation}
where $\mathbf{1}_{\mathbb{S}}$ denotes the characteristic function of the
set $\mathbb{S} \subset \RR$:
\begin{equation}
\mathbf{1}_{\mathbb{S}}(\omega) = \begin{cases}
1 & \mbox{if $\omega \in \mathbb{S}$}\\
0 & \mbox{otherwise.}
\end{cases}
\nonumber
\end{equation}
In this case, \eqref{eq:gammaphi} reads: $\forall \tau \in \RR,$
\begin{align}
\gamma_{\psi_0,\psi_0^\HH}(\tau)&=\frac{1}{\pi}\int_{0}^{\pi}\cos{\Big((\frac{1}{2}+d+ \tau)\omega\Big)} d\omega\nonumber\\
& = \begin{cases}
\displaystyle \frac{(-1)^d \cos{(\pi \tau )}}{\pi(\frac{1}{2}+ d + \tau)} & \mbox{if $\tau \neq -d - \frac{1}{2}$}\\
1 & \mbox{otherwise.}
\end{cases}
\nonumber
\end{align}
For $m\in \NMs$, \eqref{eq:gammapsi} leads to $\forall \tau \in \RR,$
\begin{align}
\gamma_{\psi_m,\psi_m^\HH}(\tau)&=-\frac{1}{\pi}\int_{m\pi}^{(m+1)\pi}\sin{(\omega \tau)} d\omega
\nonumber\\
& = \begin{cases}
\displaystyle \frac{\cos\big((m+1)\pi\tau\big) - \cos(m\pi\tau)}{\pi \tau} & \mbox{if $\tau \neq 0$}\\
0 & \mbox{otherwise.}
\end{cases}
\nonumber
\end{align}
We deduce from the two previous expressions that,
for all $\ell \in \ZZ$,
\begin{align}
\gamma_{\psi_0,\psi_0^\HH}(\ell)&=\frac{(-1)^{(d+\ell)}}{\pi(d+\ell+\frac{1}{2})}
\label{eq_intercorr_shannon0}\\
\forall{m\neq0},\qquad \nonumber \\
\gamma_{\psi_m,\psi_m^\HH}(\ell)&=\begin{cases}
\displaystyle (-1)^{(m+1)\ell}\frac{1-(-1)^\ell}{\pi \ell} & \mbox{if\ $\ell \neq 0$}\\
0 & \mbox{otherwise}.
\end{cases}
\label{eq_intercorr_shannon}
\end{align}
We can remark that, for all $(m,m') \in \NMsSQ$,
\begin{equation}
\gamma_{\psi_m,\psi_m^\HH}(\ell) = (-1)^{(m'-m)\ell}\gamma_{\psi_{m'},\psi_{m'}^\HH}(\ell)
\label{eq:symShanMey}
\end{equation}
and
$\gamma_{\psi_m,\psi_m^\HH}(\ell) = 0$, when $\ell$ is odd.
Besides, the correlation sequences decay pretty slowly as
$\ell^{-1}$. We also note that, as the functions $\psi_m$,
$m\in\NM$, have non-overlapping spectra,
\eqref{eq:bb1D}-\eqref{eq:bbh1D} (resp.
\eqref{eq:bb2D}-\eqref{eq:bbh2D}) allow us to conclude that,
dual-tree noise wavelet coefficients corresponding respectively to
subbands $(j,m)$ and $(j,m')$ with $m\neq m'$ (resp. $(j,m_1,m_2)$
and $(j,m_1',m_2')$ with $m_1 \neq m_1'$ or $m_2 \neq m_2'$) are
perfectly uncorrelated.

\subsection{Meyer wavelets} \label{ssec:ondmeyer} These wavelets
\cite{Meyer_Y_1991_book_ond_o1}, \cite[p.
116]{Daubechies_I_1992_book_ten_lw} are also band-limited but with
smoother transitions than Shannon wavelets. The scaling function
is consequently defined as
\begin{equation}
\widehat{\psi}_0(\omega)=
\begin{cases}
1 & \mbox{if $0\leq |\omega| \leq \pi(1-\epsilon)$}\\
\displaystyle W\Big(\frac{|\omega|}{2\pi\epsilon}-\frac{1-\epsilon}{2\epsilon}\Big) & \mbox{if $\pi(1-\epsilon)\leq |\omega| \leq \pi(1+\epsilon)$}\\
0 & \mbox{otherwise,}
\end{cases}
\label{eq:Meyer0}
\end{equation}
where $0< \epsilon \leq 1/(M+1)$ and
\begin{equation}
\forall\, \theta \in [0,1], \qquad
W(\theta)=\cos\Big(\frac{\pi}{2}\nu(\theta)\Big)
\nonumber
\end{equation}
 with $\nu\;:\;
[0,1] \to [0,1]$ such that
\begin{align}
&\nu(0) = 0\\
&\forall\, \theta \in
[0,1], \quad \nu(1-\theta) = 1-\nu(\theta).
\nonumber
\end{align}
Then, it can be noticed that
\begin{equation}
\forall\, \theta \in [0,1],\qquad
W^2(1-\theta) = 1-W^2(\theta).
\label{eq:propwindow}
\end{equation}
 A common choice for the
$\nu$ function is \cite[p. 119]{Daubechies_I_1992_book_ten_lw}:
\begin{equation}
\forall\, \theta \in [0,1],\qquad
\nu(\theta)=\theta^4(35-84\,\theta+70\,\theta^2-20\,\theta^3).
\label{eq:Meyerexample}
\end{equation}
For $m\in \{1,\ldots,M-2\}$, the associated $M$-band wavelets are given by \eqref{eq:Meyerm}
\begin{figure*}[!t]
\begin{equation}
\widehat{\psi}_m(\omega) = \begin{cases}
\displaystyle e^{\imath \eta_m(\omega)} W\Big(\frac{m+\epsilon}{2\epsilon}-\frac{|\omega|}{2\pi\epsilon}\Big)
& \mbox{if $(m-\epsilon)\pi \leq |\omega| \leq (m+\epsilon)\pi$}\\
e^{\imath \eta_m(\omega)} & \mbox{if $(m+\epsilon)\pi \leq |\omega| \leq (m+1-\epsilon)\pi$}\\
\displaystyle e^{\imath \eta_m(\omega)} W\Big(\frac{|\omega|}{2\pi \epsilon}-\frac{m+1-\epsilon}{2\epsilon}\Big)
& \mbox{if $(m+1-\epsilon)\pi \leq |\omega| \leq (m+1+\epsilon)\pi$}\\
0 & \mbox{otherwise}
\end{cases}
\label{eq:Meyerm}
\end{equation}
\end{figure*}
while, for the last wavelet, we have \eqref{eq:MeyerM}.
\begin{figure*}
\begin{equation}
\widehat{\psi}_{M-1}(\omega) = \begin{cases}
\displaystyle e^{\imath \eta_{M-1}(\omega)}
W\Big(\frac{M-1+\epsilon}{2\epsilon}-\frac{|\omega|}{2\pi\epsilon}\Big)
& \mbox{if $(M-1-\epsilon)\pi \leq |\omega| \leq (M-1+\epsilon)\pi$}\\
e^{\imath \eta_{M-1}(\omega)}
&\mbox{if $(M-1+\epsilon)\pi \leq |\omega| \leq M(1-\epsilon)\pi$}\\
\displaystyle e^{\imath  \eta_{M-1}(\omega)} W\Big(\frac{|\omega|}{2\pi \epsilon M}-\frac{1-\epsilon}{2\epsilon}\Big)
& \mbox{if $M(1-\epsilon) \pi < |\omega| \leq M(1+\epsilon)\pi$}\\
0 & \mbox{otherwise.}
\end{cases}
\label{eq:MeyerM}
\end{equation}
\hrulefill
\vspace*{4pt}
\end{figure*}
Hereabove, the phase functions $\eta_m$, $m\in \NMs$, are odd
functions and we have
\begin{multline}
\forall \omega \in (M\pi,M(1+\epsilon)\pi),\qquad\\
\eta_{M-1}(\omega) = -\eta_{M-1}(2M\pi-\omega) \mod 2\pi.
\nonumber
\end{multline}
In addition, for the orthonormality condition to be satisfied, the following
recursive equations must hold:
\begin{multline}
\forall \omega \in ((m-\epsilon)\pi,(m+\epsilon)\pi),\qquad\\
\eta_m(\omega-2m\pi)-\eta_{m-1}(\omega-2m\pi) \\= \eta_m(\omega)-\eta_{m-1}(\omega) + \pi 
\mod 2\pi.
\nonumber
\end{multline}
by setting: $\forall \omega \in \RR$, $\eta_0(\omega) = 0$. Generally, linear phase solutions to the previous equation are chosen \cite{Tennant_B_2003_sol_omcbwcp}.

Using the above expressions, the cross-correlations between the Meyer basis functions and their dual counterparts are derived in Appendix  \ref{ap:Meyer}.
It can be deduced from these results that: $\forall \; \ell \in
\ZZ$,
\begin{equation}
\gamma_{\psi_0,\psi_0^\HH}(\ell) =
\frac{(-1)^{d+\ell}}{\pi(d+\ell+\frac{1}{2})}
-(-1)^{d+\ell}\,I_\epsilon\Big(d+\ell+\frac{1}{2}\Big),
\label{eq_inter_meyer0}
\end{equation}
where
\begin{equation}
\forall x \in \RR, \; I_\epsilon(x) = 2 \epsilon\int_0^{1}W^2\Big(\frac{1+\theta}{2}\Big)\,
\sin{(\pi\epsilon x\theta)}\,d\theta.
\end{equation}
For the wavelets, we have when $m \in \{1,\ldots,M-2\}$:
\begin{multline}
\gamma_{\psi_m,\psi_m^\HH}(\ell) =\\
\begin{cases}
\displaystyle (-1)^{(m+1)\ell} \big(1-(-1)^\ell\big) \left(
\frac{1}{\pi \ell} - I_\epsilon(\ell)\right) & \mbox{if $\ell \neq 0$}\\
0 & \mbox{otherwise}
\end{cases}
\label{eq_inter_meyerm}
\end{multline}
whereas
\begin{multline}
\gamma_{\psi_{M-1},\psi_{M-1}^\HH}(\ell) =\\
\begin{cases}
\displaystyle (-1)^{M\ell} \big( \frac{1-(-1)^\ell}{\pi\ell} + (-1)^{\ell} I_\epsilon(\ell)-  I_{M\epsilon}(\ell) \big)& \mbox{if $\ell \neq 0$}\\
0 & \mbox{otherwise.}
\end{cases}
%\begin{cases}
%\displaystyle (-1)^{M\ell} \frac{1-(-1)^\ell}{\pi\ell}+(-1)^{(M+1)\ell} I_\epsilon(\ell)- (-1)^{M\ell} I_{M\epsilon}(\ell) & \mbox{if $\ell \neq 0$}\\
%0 & \mbox{otherwise.}
%\end{cases}
\label{eq_inter_meyerM}
\end{multline}
% \begin{multline}
% \gamma_{\psi_1,\psi_1^\HH}(q) = \frac{\cos(2 \pi \epsilon
% q)-(-1)^q
% \cos(\pi \epsilon q)}{\pi q}+2(-1)^{q+1}\epsilon \int_{0}^{1}
% \gamma^2(\theta)  \; \sin(\pi\epsilon(1-2\theta) q)\; d\theta\\ +\; 4\epsilon \int_{0}^{1}\gamma^2(\theta)
% \sin(2\pi\epsilon(1-2\theta)q)d\theta.
% \label{eq_inter_meyer1}
% \end{multline}
Similarly to Shannon wavelets, for $(m,m') \in \{1,\ldots,M-2\}^2$, \eqref{eq:symShanMey} holds and
$\gamma_{\psi_m,\psi_m^\HH}(\ell) = 0$, when $\ell$ is odd.
As expected, we observe that the previous  cross-correlations converge point-wise to the expressions
given for Shannon wavelets  in \eqref{eq_intercorr_shannon0}
and \eqref{eq_intercorr_shannon}, as we let $\epsilon \to 0$.

Besides, let us make the following assumption: $W^2$ is $2q+2$ times continuously differentiable on $[0,1]$ with $q\in \NN^*$
and, for all $\ell\in \{0,\ldots,2q-1\}$,
$(W^2)^{(\ell)}(1) = 0$. This assumption is typically satisfied by the window defined by \eqref{eq:Meyerexample} with $q=4$. From \eqref{eq:propwindow}, it can be further noticed that, for all $\ell \in \{1,\ldots,q+1\}$, $(W^2)^{(2\ell)}(1/2) = 0$.
Then, when $x\neq 0$, it is readily checked by integrating by part that
\begin{multline}
\int_0^{1}W^2\Big(\frac{1+\theta}{2}\Big)\,
\sin{(\pi\epsilon x \theta )}\,d\theta
= \frac{1}{2\pi \epsilon x} \\+\frac{(-1)^{q-1}(W^2)^{(2q)}(1)}{2^{2q}(\pi \epsilon x)^{2q+1}}
\cos(\pi \epsilon x)\\+ \frac{(-1)^{q}(W^2)^{(2q+1)}(1)}{2^{2q+1}(\pi \epsilon x)^{2q+2}}
\sin(\pi \epsilon x)\\+ \frac{(-1)^{q+1}}{2^{2q+2}(\pi \epsilon x)^{2q+2}}
\int_0^{1}(W^2)^{(2q+2)}\Big(\frac{1+\theta}{2}\Big)\,
\sin{(\pi\epsilon x \theta)}\,d\theta.
\nonumber
\end{multline}
This shows that, as $|x| \to \infty$,
\begin{equation}
I_\epsilon(x) = \frac{1}{\pi x} +
\frac{(-1)^{q-1}(W^2)^{(2q)}(1)}{2^{2q-1}\pi^{2q+1} \epsilon^{2q} x^{2q+1} }
\cos(\pi \epsilon x) + O(x^{-2q-2}).
\label{eq:expandIepsilon}
\end{equation}
For example, for the taper function defined by \eqref{eq:Meyerexample}, we get
\begin{equation}
I_\epsilon(x) = \frac{1}{\pi x} -\frac{385875}{4\pi^7 \epsilon^8 x^9}
\cos(\pi \epsilon x) + O(x^{-10}).
\nonumber
\end{equation}
Combining \eqref{eq:expandIepsilon} with \eqref{eq_inter_meyer0}, \eqref{eq_inter_meyerm}
and \eqref{eq_inter_meyerM} allows us to see that
the cross-correlation sequences decay as $\ell^{-2q-1}$ when $|\ell| \to \infty$.
Eq. \eqref{eq:expandIepsilon} also indicates that the decay tends to be faster when
$\epsilon$ is large, which is consistent with intuition since the basis functions are then better localized in time.
Note that, as shown by \eqref{eq:Meyerm} and \eqref{eq:MeyerM}, under the considered differentiability assumptions, $|\widehat{\psi}_m|^2$ is $2q-1$ times continuously
differentiable on $\RR$
whereas $\widehat{\psi}_m(\omega) = 0$ for $m\in \NMs$ and
$|\omega| < m-\epsilon$. Prop. \ref{prop:decaycor} then guarantees a decay rate at least equal to $|\ell|^{-2q+1}$ (here, $N_m = q-1$).
In this case, we see that the decay rate derived from \eqref{eq:expandIepsilon} is more acurate than the decay given by Prop. \ref{prop:decaycor}.

\subsection{Wavelet families derived from wavelet packets}
\subsubsection{General form}
One can generate $M$-band orthonormal wavelet bases from dyadic
orthonormal wavelet packet decompositions corresponding to an
equal subband analysis. We are consequently limited to scaling
factors $M$ which are power of $2$. More precisely, let
$(\psi_m)_{m\in \NN}$ be the considered wavelet packets
\cite{Coifman_R_1992_tit_ent_babbs}, for all $P \in \NN^*$ an
orthonormal $M$-band wavelet decomposition is obtained using the
basis functions $(\psi_m)_{0\leq m < M}$ with $M = 2^P$. In this
case, the basis functions satisfy the following two-scale
relations: for all $m\in \NN$,
\begin{align}
\sqrt{2} \widehat{\psi}_{2m}(2\omega) &= A_0(\omega) \widehat{\psi}_{m}(\omega)
\label{eq:packet}\\
\sqrt{2} \widehat{\psi}_{2m+1}(2\omega) &= A_1(\omega)\widehat{\psi}_{m}(\omega),
\label{eq:packetb}
\end{align}
where $A_0$ and $A_1$ are the frequency responses of the low-pass
and high-pass filters of the associated two-band para-unitary
synthesis filter bank. We can infer the following result.
\begin{prop} \label{prop:recurpacket}
For all $\tau \in \RR$ and $m\in \NN^*$, we have
\begin{align}
\gamma_{\psi_{2m},\psi_{2m}^\HH}(\tau)&=\gamma_{a_0}[0] \gamma_{\psi_m,\psi_m^\HH}(2\tau) \nonumber
\\ + \sum_{k=1}^\infty 
\gamma_{a_0}[k]\Big( & \gamma_{\psi_m,\psi_m^\HH}(2\tau+k)+\gamma_{\psi_m,\psi_m^\HH}(2\tau-k)\Big)
\label{eq:interpair}\\
\gamma_{\psi_{2m+1},\psi_{2m+1}^\HH}(\tau)&=\gamma_{a_1}[0] \gamma_{\psi_m,\psi_m^\HH}(2\tau)\nonumber
\\ +
\sum_{k=1}^\infty
\gamma_{a_1}[k]\Big( & \gamma_{\psi_m,\psi_m^\HH}(2\tau+k)+\gamma_{\psi_m,\psi_m^\HH}(2\tau-k)\Big),
\label{eq:interimpair}
\end{align}
where, for all $\epsilon \in \{0,1\}$, $(\gamma_{a_\epsilon}[k])_{k\in
\ZZ}$ is the autocorrelation of the impulse response $(a_\epsilon[k])_{k\in \ZZ}$
of the filter with frequency response $A_\epsilon$:
\begin{equation}
\forall k \in \ZZ,\qquad \gamma_{a_\epsilon}[k] = \sum_{q=-\infty}^\infty a_\epsilon[q]\,a_\epsilon[q-k].
\nonumber
\end{equation}
\end{prop}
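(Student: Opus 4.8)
The plan is to reduce everything to the single-integral representation \eqref{eq:gammapsi} and to combine it with the two-scale relations \eqref{eq:packet}--\eqref{eq:packetb} and the Wiener--Khinchin identity for the filters. Since $m\in\NN^*$, both indices $2m$ and $m$ are nonzero, so \eqref{eq:gammapsi} applies to each. I treat the even index in detail; the computation for $2m+1$ is identical after replacing $A_0$ by $A_1$ and $\gamma_{a_0}$ by $\gamma_{a_1}$. Starting from
\[
\gamma_{\psi_{2m},\psi_{2m}^\HH}(\tau) = -\frac{1}{\pi}\int_0^\infty |\widehat{\psi}_{2m}(\omega)|^2\,\sin(\omega\tau)\,d\omega,
\]
I would rewrite \eqref{eq:packet} as $|\widehat{\psi}_{2m}(2\omega)|^2 = \tfrac{1}{2}|A_0(\omega)|^2\,|\widehat{\psi}_m(\omega)|^2$ and apply the change of variable $u=\omega/2$, which yields
\[
\gamma_{\psi_{2m},\psi_{2m}^\HH}(\tau) = -\frac{1}{\pi}\int_0^\infty |A_0(u)|^2\,|\widehat{\psi}_m(u)|^2\,\sin(2u\tau)\,du.
\]

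The second step is to expand the $2\pi$-periodic factor $|A_0|^2$. Writing $A_0(u)=\sum_k a_0[k]\,e^{-\imath u k}$ and using that the impulse response is real, the Wiener--Khinchin relation gives the Fourier expansion $|A_0(u)|^2 = \gamma_{a_0}[0] + 2\sum_{k=1}^\infty \gamma_{a_0}[k]\,\cos(ku)$, where $\gamma_{a_0}$ is the even autocorrelation sequence of the statement. Inserting this expansion and interchanging sum and integral, the constant term reproduces $\gamma_{a_0}[0]\,\gamma_{\psi_m,\psi_m^\HH}(2\tau)$ directly through \eqref{eq:gammapsi}. For each $k\geq 1$, the product-to-sum identity $\cos(ku)\sin(2u\tau)=\tfrac{1}{2}\big(\sin(u(2\tau+k))+\sin(u(2\tau-k))\big)$ converts the $k$-th integral into $\gamma_{a_0}[k]\big(\gamma_{\psi_m,\psi_m^\HH}(2\tau+k)+\gamma_{\psi_m,\psi_m^\HH}(2\tau-k)\big)$, again by \eqref{eq:gammapsi}. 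Summing the contributions gives exactly \eqref{eq:interpair}, and the same argument with $A_1$ produces \eqref{eq:interimpair}.

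The only delicate point is the interchange of the infinite summation with the integral. For the orthonormal wavelet-packet filters considered here the responses $A_\epsilon$ are FIR, so $\gamma_{a_\epsilon}$ has finite support and the series is in fact a finite sum, making the interchange immediate. In the general case it is justified by Fubini's theorem: since $\int_0^\infty|\widehat{\psi}_m(u)|^2\,du=1$ by unit-norm normalization and $|\cos(ku)\sin(2u\tau)|\leq 1$, the iterated sum-integral is dominated by $\sum_k|\gamma_{a_0}[k]|$, which is finite whenever $\gamma_{a_0}\in\ell^1$.
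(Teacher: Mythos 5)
Your proof is correct and follows essentially the same route as the paper's own argument: rewrite $\gamma_{\psi_{2m},\psi_{2m}^\HH}$ via \eqref{eq:gammapsi} and the two-scale relation \eqref{eq:packet}, expand $|A_\epsilon|^2$ as the Fourier series of its autocorrelation sequence, and use product-to-sum trigonometric identities to re-identify each term with \eqref{eq:gammapsi}; your explicit justification of the sum--integral interchange is a point the paper leaves implicit. One harmless slip: by Parseval and unit-norm normalization, $\int_0^\infty |\widehat{\psi}_m(u)|^2\,du = \pi$ rather than $1$, but finiteness is all your Fubini argument requires.
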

\begin{proof}
See Appendix \ref{ap:recurpacket}.
\end{proof}
It is important to note that \eqref{eq:interpair}
and \eqref{eq:interimpair} are not valid for $m=0$.
These two relations  define recursive equations for the
calculation of the cross-correlations
$(\gamma_{\psi_{m},\psi_{m}^\HH})_{m> 1}$, provided that
$\gamma_{\psi_{1},\psi_{1}^\HH}$ has been calculated first.

For this specific class of $M$-band wavelet decompositions, it is possible to relate the decay properties of the cross-correlation functions to the number of vanishing moments of the underlying dyadic wavelet analysis.
\begin{prop} \label{prop:binasympt}
Assume that the filters with frequency response $A_0$ and $A_1$ are FIR and $A_1$ has a zero of order $N\in \NN^*$ at frequency 0 (or, equivalently, $A_0$ has a zero of order $N$ at frequency $1/2$). Then,
 there exists $C_0 \in \RR_+$ such that
\begin{equation}
\forall \tau \in \RR^*,\qquad
|\gamma_{\psi_{0},\psi_{0}^\HH}(\tau)|
\leq C_0 |\tau|^{-2N-1}.
\label{eq:boundcorwavpack0}
\end{equation}
In addition, for all $m\in \NN^*$, let
$(\epsilon_1,\epsilon_2,\ldots,\epsilon_r)\in \{0,1\}^r$, $r\in \NN^*$, be the digits in the binary representation of $m$, that is
\begin{equation}
m = \sum_{i=1}^r \epsilon_i 2^{i-1}.
\label{eq:binary}
\end{equation}
Then, there exists $C_m \in \RR_+$ such that
\begin{equation}
\forall \tau \in \RR^*,\qquad
|\gamma_{\psi_{m},\psi_{m}^\HH}(\tau)|
\leq C_m|\tau|^{-2N(\sum_{i=1}^r \epsilon_i)-1}.
\label{eq:boundcorwavpack}
\end{equation}
\end{prop}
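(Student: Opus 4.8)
The plan is to argue by induction on the binary length $r$ of $m$, using the recursion of Prop.~\ref{prop:recurpacket} to descend from $m$ to $\lfloor m/2\rfloor$ and thereby peel off the digits $\epsilon_i$ one at a time. The guiding principle is that each digit equal to $0$ (a low-pass branch, governed by $A_0$) leaves the decay exponent unchanged, whereas each digit equal to $1$ (a high-pass branch, governed by $A_1$) improves it by $2N$. Since the leading digit $\epsilon_r=1$ contributes the base exponent $2N+1$ and the remaining $\sum_{i=1}^{r-1}\epsilon_i=\bigl(\sum_{i=1}^r\epsilon_i\bigr)-1$ high-pass branches each add $2N$, the accumulated exponent is exactly $2N\bigl(\sum_{i=1}^r\epsilon_i\bigr)+1$, which is \eqref{eq:boundcorwavpack}. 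Equivalently, iterating \eqref{eq:packet}--\eqref{eq:packetb} shows that $|\widehat{\psi}_m(\omega)|^2$ carries one factor $|A_1(\omega/2^i)|^2$ for every $i$ with $\epsilon_i=1$; as each such factor vanishes to order $2N$ at the origin, $\psi_m$ has $N\sum_i\epsilon_i$ vanishing moments, and the target exponent is precisely $2N_m+1$ with $N_m=N\sum_i\epsilon_i$, the value predicted by Prop.~\ref{prop:decaycor}.

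For the inductive step I would use that $A_0$ and $A_1$ are FIR, so the autocorrelations $\gamma_{a_0}$ and $\gamma_{a_1}$ have finite support and the sums in \eqref{eq:interpair}--\eqref{eq:interimpair} are finite. For the low-pass branch, if $|\gamma_{\psi_m,\psi_m^\HH}(\tau)|\le C\,|\tau|^{-s}$ for $|\tau|$ large, then since $|2\tau\pm k|\ge |\tau|$ once $|\tau|$ exceeds the bounded support of $\gamma_{a_0}$, \eqref{eq:interpair} immediately yields $|\gamma_{\psi_{2m},\psi_{2m}^\HH}(\tau)|\le C'\,|\tau|^{-s}$ with the same $s$. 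For the high-pass branch, the extra decay comes from the vanishing moments of $\gamma_{a_1}$: because $|A_1|^2$ is the symbol of $\gamma_{a_1}$ and has a zero of order $2N$ at the origin, one has $\sum_k k^p\gamma_{a_1}[k]=0$ for $p=0,\dots,2N-1$, or equivalently $\gamma_{a_1}$ factors as a $2N$-th order finite difference of a finite sequence. Substituting this factorization into \eqref{eq:interimpair} rewrites the right-hand side as a $2N$-th order finite difference of $\gamma_{\psi_m,\psi_m^\HH}$ evaluated at the dilated argument, which should decay $2N$ orders faster than $\gamma_{\psi_m,\psi_m^\HH}$ itself, giving exponent $s+2N$.

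The base cases close the induction. For $m=1$, the wavelet $\psi_1$ has exactly $N$ vanishing moments (the single zero of order $N$ that $A_1$ places at the origin), so Prop.~\ref{prop:decaycor} applied with $N_1=N$ gives $|\gamma_{\psi_1,\psi_1^\HH}(\tau)|\le C\,|\tau|^{-2N-1}$, which is \eqref{eq:boundcorwavpack} for $r=1$. The scaling-function estimate \eqref{eq:boundcorwavpack0} must be handled separately, since \eqref{eq:interpair}--\eqref{eq:interimpair} are not valid for $m=0$: here the decay is not produced by vanishing moments (the father wavelet has none) but by the zeros that the order-$N$ root of $A_0$ at Nyquist forces on $|\widehat{\psi}_0|^2$ at every $\omega=2k\pi$, $k\neq 0$. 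Integrating \eqref{eq:gammaphi} by parts repeatedly, using that these boundary values and the odd-order derivatives of the even function $|\widehat{\psi}_0|^2$ at the origin vanish up to order $2N$, one gains the required $2N+1$ powers of $\tau$; alternatively one simply invokes Prop.~\ref{prop:decaycor} with $N_0=\min_{m\in\NMs}N_m=N$.

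The main obstacle is the high-pass step, namely justifying that the $2N$-th order finite difference of $\gamma_{\psi_m,\psi_m^\HH}$ genuinely decays $2N$ orders faster than $\gamma_{\psi_m,\psi_m^\HH}$: a finite difference gains orders only insofar as the function is smooth on the relevant scale, so one must control a few derivatives (or H\"older moduli) of $\gamma_{\psi_m,\psi_m^\HH}$, not merely its size. The cleanest way I would secure this for FIR filters is to exploit compact support directly. Since $\psi_m$ is compactly supported, expanding the Hilbert transform $\psi_m^\HH(x-\tau)=\frac1\pi\,\mathrm{p.v.}\!\int \psi_m(y)/(x-\tau-y)\,dy$ for large $|\tau|$ and invoking the $N_m$ vanishing moments of $\psi_m$ shows both that $\psi_m^\HH(t)=O(|t|^{-N_m-1})$ and, after insertion into $\gamma_{\psi_m,\psi_m^\HH}(\tau)=\int\psi_m(x)\psi_m^\HH(x-\tau)\,dx$ and a second use of the vanishing moments, that $\gamma_{\psi_m,\psi_m^\HH}(\tau)=O(|\tau|^{-2N_m-1})$. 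This tail expansion reproduces the exponent $2N_m+1$ with $N_m=N\sum_i\epsilon_i$ without any smoothness hypothesis, and can be used to discharge the delicate regularity point left open by the finite-difference estimate.
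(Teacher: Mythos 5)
Your proposal, as finally assembled, is correct, but it reaches the result by a partly different and more roundabout route than the paper. The paper's entire proof is: iterate \eqref{eq:packet}--\eqref{eq:packetb} along the binary digits to get $\widehat{\psi}_m(\omega)=\widehat{\psi}_0(\omega/2^P)\prod_{i=1}^{r}2^{-1/2}A_{\epsilon_i}(\omega/2^i)\prod_{i=r+1}^{P}2^{-1/2}A_0(\omega/2^i)$, hence $\widehat{\psi}_m(\omega)=O\big(\omega^{N\sum_i\epsilon_i}\big)$ as $\omega\to0$; observe that FIR filters give compactly supported packets, so $\widehat{\psi}_m$ is infinitely differentiable with all derivatives in $\LL^2(\RR)$ and the hypotheses of Prop.~\ref{prop:decaycor} hold with $N_m=N\sum_i\epsilon_i$; then \eqref{eq:boundcorwavpack} is exactly \eqref{eq:decayasymptcor2}, and \eqref{eq:boundcorwavpack0} follows from the same proposition applied to the two-band system, where $N_0=N_1=N$. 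You state this route in passing (your ``equivalently'' remark, your base case $m=1$, your fallback for $m=0$), so the paper's argument is embedded in your text; what differs is your headline plan and your fix. The headline induction through Prop.~\ref{prop:recurpacket} has, as you yourself diagnose, a real gap at the high-pass step: a $2N$-th order finite difference gains $2N$ orders of decay only if one also controls derivatives (or moduli of continuity) of $\gamma_{\psi_m,\psi_m^\HH}$, and nothing in the induction supplies that, so the induction as stated would not close (the low-pass step is fine). Your time-domain repair, however, is sound and in fact supersedes the induction: for compactly supported $\psi_m$ with $N_m$ vanishing moments and $|\tau|$ beyond the support diameter, $\gamma_{\psi_m,\psi_m^\HH}(\tau)=\frac{1}{\pi}\iint\psi_m(x)\psi_m(y)\,(x-y-\tau)^{-1}\,dx\,dy$, and the geometric expansion of the kernel annihilates every term of order $p<2N_m$ (a surviving term needs moment order at least $N_m$ in $x$ and in $y$), giving $O(|\tau|^{-2N_m-1})$ directly, the bound for small $|\tau|$ being recovered from $|\gamma_{\psi_m,\psi_m^\HH}|\le 1$. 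This is a genuinely more elementary alternative to Prop.~\ref{prop:decaycor} for $m\neq 0$ --- purely time-domain, no Fourier-side smoothness bookkeeping --- at the price of working only for true Hilbert pairs: it cannot treat $m=0$, since $\psi_0^\HH$ is not the Hilbert transform of $\psi_0$ (cf. \eqref{eq:linkpsi0Hpsi0}) and $\psi_0$ has no vanishing moments, so there you, like the paper, must invoke Prop.~\ref{prop:decaycor}. If you write this up, drop the induction and keep only the digit-counting of vanishing moments plus your expansion (or the paper's one-line appeal to Prop.~\ref{prop:decaycor}); and wherever you do cite Prop.~\ref{prop:decaycor}, state explicitly that its differentiability and integrability hypotheses hold because compact support makes $\widehat{\psi}_m$ smooth with square-integrable derivatives.
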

\begin{proof}
The filters of the underlying dyadic multiresolution being FIR (Finite Impulse Response), the wavelet packets are compactly supported. Consequently, their Fourier transforms are infinitely differentiable, their derivatives of any order belonging to $\LL^2(\RR)$.
% \forall k t^k \psi_m(t) \in \LL^2([-A,A]) \subset \LL^1([-A,A])$
In addition, the binary representation of $m \in \NN^*$ being given by \eqref{eq:binary}, Eqs.~\eqref{eq:packet} and
\eqref{eq:packetb} yield
\begin{equation}
\widehat{\psi}_{m}(\omega) =
\widehat{\psi}_{0}\Big(\frac{\omega}{2^P}\Big)\prod_{i=1}^r
\left(\frac{1}{\sqrt{2}}\,
A_{\epsilon_i}\Big(\frac{\omega}{2^i}\Big)\right)\prod_{i=r+1}^P
\left(\frac{1}{\sqrt{2}}\,
A_{0}\Big(\frac{\omega}{2^i}\Big)\right)
\nonumber
\end{equation}
that is $H_m(\omega) = \prod_{i=1}^P
A_{\epsilon_i}(2^{P-i}\omega)$. Moreover, by assumption
$A_1(\omega) = O(\omega^N)$ as $\omega \to 0$, whereas $A_0(0) =
\sqrt{2}$ and $|\widehat{\psi}_{0}(0)|=1$. This shows that, when
$m \neq 0$, $\widehat{\psi}_{m}(\omega) = O(\omega^{N(\sum_{i=1}^r
\epsilon_i)})$ as $\omega \to 0$. From \eqref{eq:decayasymptcor2},
we deduce the upper bound in \eqref{eq:boundcorwavpack}.
Furthermore, by applying Prop. \ref{prop:decaycor} when $M=2$, we
have then $N_0 = N_1 = N$ and  \eqref{eq:boundcorwavpack0} is
obtained.
\end{proof}
We see that the cross-correlation
$\gamma_{\psi_{m},\psi_{m}^\HH}$ decays all the more rapidly as the number of 1's  in the binary representation of $m$ is large.\footnote{The characterization of the sum of digits of integers remains an open problem in number theory\cite{Drmota_M_2005_jlms_sum_dfs,Allouche_J_2007_jnt_sum_sdcbd}.}

\subsubsection{The particular case of Walsh-Hadamard transform}
 The case $M=2$ corresponds to Haar wavelets. In contrast with  Shannon wavelets, these wavelets lay emphasis on
time/spatial localization. We consequently have:
\begin{align}
  \widehat{\psi}_0(\omega)& = \mathrm{sinc}(\frac{\omega}{2})\; e^{-\imath\frac{w}{2}} \label{eq_ond_haar0}\\
\quad \widehat{\psi}_1(\omega)& =\imath \;
\mathrm{sinc}(\frac{\omega}{4}) \; \sin(\frac{\omega}{4}) \;
e^{-\imath \frac{w}{2}},
\label{eq_ond_haar1}
\end{align}
where
\begin{equation}
\mathrm{sinc}(\omega) =
\begin{cases}
\displaystyle \frac{\sin(\omega)}{\omega} & \mbox{if $\omega \neq 0$}\\
1 & \mbox{otherwise.}
\end{cases}
\nonumber
\end{equation}
After some calculations which are provided in Appendix \ref{ap:haar},
we obtain for all $\tau \in \RR$,
\begin{multline} \pi
\gamma_{\psi_0,\psi_0^\HH}(\tau)=\sum_{k=0}^\infty (-1)^k 
\Big(\frac{1}{2}\; S_k(3+2d+2\tau)\\-S_k(1+2d+2\tau) +\frac{1}{2}\;
S_k(-1+2d+2\tau) \Big),
\label{eq_inter_haar0}
\end{multline}
where, for all $k\in \mathbb{N}$ and for all $x\in \RR$,
\begin{equation}
S_k(x) = x\; \int_{k\pi x}^{(k+1)\pi x} \mathrm{sinc}(u)\;du.
\nonumber
\end{equation}
Furthermore, we have (adopting the  convention:
$0\ln(0)=0$):
\begin{multline}
\pi
\gamma_{\psi_1,\psi_1^\HH}(\tau)=6\,\tau\ln|\tau|\,+\,(\tau+1)\ln|\tau+1|\,+\,(\tau-1)\ln|\tau-1| \\
-\,4\;\Big(\tau+\frac{1}{2}\Big)\ln\Big|\tau+\frac{1}{2}\Big|\,-\,4\;\Big(\tau-\frac{1}{2}\Big)\ln\Big|\tau-\frac{1}{2}\Big|.
\label{eq_inter_haar1}
\end{multline}

For $M = 2^P$ with $P > 1$, the cross-correlations $\gamma_{\psi_{m},\psi_{m}^\HH}$, $m
\in \{2,\ldots,2^P-1\}$, can be determined in a recursive manner thanks
to Prop. \ref{prop:recurpacket}. For Walsh-Hadamard wavelets, we have
\begin{align}
\forall \epsilon \in\{0,1\},\;\forall k\in \ZZ,\;
\gamma_{a_\epsilon}[k] = \begin{cases}
1  & \mbox{if $k = 0$}\\
\displaystyle \frac{(-1)^\epsilon}{2} & \mbox{if $|k| = 1$}\\
0 & \mbox{otherwise}
\end{cases}
\end{align}
and, consequently,
for all $m\neq 0$ and $\tau\in \RR$,
%\begin{align}
%\gamma_{\psi_{2m},\psi_{2m}^\HH}(\tau)&=
%\gamma_{\psi_m,\psi_m^\HH}(2\tau) \nonumber \\
%& +\frac{1}{2}\Big(\gamma_{\psi_m,\psi_m^\HH}(2\tau+1)+\gamma_{\psi_m,\psi_m^\HH}(2\tau-1)\Big)
%\label{eq:haarpair}\\
%\gamma_{\psi_{2m+1},\psi_{2m+1}^\HH}(\tau)&=
%\gamma_{\psi_m,\psi_m^\HH}(2\tau)  \nonumber \\ & -\frac{1}{2}\Big(\gamma_{\psi_m,\psi_m^\HH}(2\tau+1)+\gamma_{\psi_m,\psi_m^\HH}(2\tau-1)\Big).
%\label{eq:haarimpair}
%\end{align}
\begin{multline}
\gamma_{\psi_{2m},\psi_{2m}^\HH}(\tau)= \gamma_{\psi_m,\psi_m^\HH}(2\tau) \\
+\frac{1}{2}\Big(\gamma_{\psi_m,\psi_m^\HH}(2\tau+1)+\gamma_{\psi_m,\psi_m^\HH}(2\tau-1)\Big)
\label{eq:haarpair}
\end{multline}
\begin{multline}
\gamma_{\psi_{2m+1},\psi_{2m+1}^\HH}(\tau)= \gamma_{\psi_m,\psi_m^\HH}(2\tau)\\  -\frac{1}{2}\Big(\gamma_{\psi_m,\psi_m^\HH}(2\tau+1)+\gamma_{\psi_m,\psi_m^\HH}(2\tau-1)\Big).
\label{eq:haarimpair}
\end{multline}
From \eqref{eq_inter_haar1}, it can be noticed that
$\gamma_{\psi_1,\psi_1^\HH}(\tau)=1/(8\pi \tau^3)+ O(\tau^{-5})$
when $|\tau|>2$, which corresponds to a faster asymptotic decay
than with Shannon wavelets. The asymptotic behaviour of
$\gamma_{\psi_{m},\psi_{m}^\HH}(\tau)$, $m>2$, can also be deduced
from \eqref{eq_inter_haar1}, \eqref{eq:haarpair} and
\eqref{eq:haarimpair}. The expressions given in Table
\ref{tab:asympthaar} are in perfect agreement with the decay rates
predicted by Prop. \ref{prop:binasympt}.

\subsection{Franklin wavelets}
Franklin wavelets
\cite{Franklin_1928_math-annalen_set_cof,Stromberg_J_1983_cha_mfsohossrubhs}  correspond to a
dyadic orthonormal basis of spline wavelets of order 1 \cite[p.
146 sq.]{Daubechies_I_1992_book_ten_lw}. With the Haar wavelet,
they form a special case of Battle-Lemari{\'e} wavelets
\cite{Battle_G_1987_cmp_blo_scolf,Lemarie_P_1988_nou_bolr}. The
Fourier transforms of the scaling function and the mother wavelet
are given by:
\begin{align}
\widehat{\psi}_0(\omega)&=\left(\frac{3}{1+2 \cos^2(\omega/2)}\right)^{1/2}\,\mathrm{sinc}^2{\Big(\frac{\omega}{2}\Big)}\\
\widehat{\psi}_1(\omega)&=-\left(\frac{3 (1+2 \sin^2(\omega/4))}{\big(1+2 \cos^2(\omega/2)\big) \big(1+2 \cos^2(\omega/4)\big)}\right)^{1/2} \nonumber \\
& \times \sin^2\Big(\frac{\omega}{4}\Big)\,\mathrm{sinc}^2{\Big(\frac{\omega}{4}\Big)}
\exp{(-\imath\frac{\omega}{2})}.
\label{eq:psifranklin}
\end{align}
The expression of the cross-correlation of the scaling functions readily
follows from \eqref{eq:gammaphi}:
\begin{equation}
\forall \tau\in\RR,\qquad
\gamma_{\psi_0,\psi_0^\HH}(\tau) = \frac{6}{\pi}  \sum_{k=0}^\infty (-1)^kT_k(1+2d+2\tau),
\nonumber
\end{equation}
where, for all $k\in \NN$ and $x\in\RR$,
\begin{equation}
T_k(x) = \int_{k\pi}^{(k+1)\pi}
\frac{\mathrm{sinc}^4(u)}{1+2 \cos^2(u)}\,\cos(ux)\,du.
\nonumber
\end{equation}
The expression of the cross-correlation of the mother wavelet can
be deduced from \eqref{eq:gammapsi} and \eqref{eq:psifranklin} and
resorting to numerical methods for the computation of the
resulting integral, but it is also possible to obtain a series
expansion of the cross-correlation as shown next.

Taking the square modulus of \eqref{eq:psifranklin}, we find
\begin{equation}
2|\widehat{\psi}_1(2\omega)|^2 = |\widetilde{A}_1(\omega)|^2 \, |\widehat{\chi}(\omega)|^2,
\label{eq:Franklinwavscal}
\end{equation}
where
\begin{align}
& \widetilde{A}_1(\omega)=\left(\frac{6 \big(2- \cos(\omega)\big)}{\big(1+2
\cos^2(\omega)\big) \big(2+\cos(\omega)\big)}\right)^{1/2}, \nonumber \\
& \widehat{\chi}(\omega)=
\Big(\frac{\sin^2{(\omega/2)}}{\omega/2} \Big) ^2\,.
\nonumber
\end{align}
Let $(\widetilde{a}_1[k])_{k\in \ZZ}$ (resp. $\chi$) be the sequence
(resp. function) whose Fourier transform is $\widetilde{A}_1$
(resp. $\widehat{\chi}$).
Similarly to \eqref{eq:interimpair}, \eqref{eq:Franklinwavscal} leads to the following relation
\begin{multline}
\forall \tau \in \RR,\qquad
\gamma_{\psi_1,\psi_1^\HH}(\tau)=\gamma_{\widetilde{a}_1}[0]\,\gamma_{\chi,\chi^\HH}(2\tau)\\+\sum_{k=1}^\infty
\gamma_{\widetilde{a}_1}[k]
\Big(\gamma_{\chi,\chi^\HH}(2\tau+k)+\gamma_{\chi,\chi^\HH}(2\tau-k)\Big),
\label{eq:relpsi1}
\end{multline}
where $(\gamma_{\widetilde{a}_1}[k])_{k\in \ZZ}$ denotes the autocorrelation of the sequence $(\widetilde{a}_1[k])_{k\in \ZZ}$.

We have then to determine $\gamma_{\chi,\chi^\HH}$ and $(\gamma_{\tilde{a}_1}[k])_{k\in \NN}$.
First, it can be shown (see Appendix
\ref{ap:splines} for more detail) that
\begin{multline}
3\pi \gamma_{\chi,\chi^\HH}(\tau)=
q_0 \tau^3 \ln|\tau| \\ + \sum_{p=1}^4 q_p \big( (\tau+p)^3 \ln|\tau+p| + (\tau-p)^3 \ln|\tau-p| \big),
\label{eq:gammatildpsi1}
\end{multline}
where
\begin{equation}
q_0 = -\frac{35}{16},\quad
q_1 = \frac{7}{4},\quad
q_2 = -\frac{7}{8},\quad
q_3 = \frac{1}{4},\quad
q_4 = -\frac{1}{32}.
\nonumber
\end{equation}
% \begin{align}
% Q_0(\tau) & = -\frac{35}{16} \tau^2\\
% Q_1(\tau) & = \frac{7}{4}(\tau+1)^2\\
% Q_2(\tau) & = -\frac{7}{8}(\tau+2)^2\\
% Q_3(\tau) & = \frac{1}{4}(\tau+3)^2\\
% Q_4(\tau) &= -\frac{1}{32}(\tau+4)^2
% \end{align}

Secondly, the sequence $(\gamma_{\tilde{a}_1}[k])_{k\in \NN}$ can be deduced
from $|\widetilde{A}_1(\omega)|^2$ by using $z$-transform inversion techniques
 (calculations are provided in Appendix \ref{ap:splines}). This leads to $\forall k \in \NN,\qquad$
\begin{equation}
\begin{cases}
\displaystyle \gamma_{\tilde{a}_1}[2k] =
\frac{2\sqrt{3}}{9}(2-\sqrt{3})^{k}\big(7(-1)^{k}+4(2-\sqrt{3})^{k}\big)\\
\displaystyle \gamma_{\tilde{a}_1}[2k+1] = \frac{8\sqrt{3}}{9}(2-\sqrt{3})^{k}
 \big((-1)^{k}(1-\sqrt{3})-(2-\sqrt{3})^{k+1}\big).
\end{cases}
\label{eq:expgammaat1}
\end{equation}
Equations \eqref{eq:relpsi1}, \eqref{eq:gammatildpsi1} and \eqref{eq:expgammaat1} thus allow an accurate numerical evaluation
of $\gamma_{\psi_1, \psi_1^\HH}$. Since
\begin{equation}
\gamma_{\chi,\chi^\HH}(\tau)\sim -3/(2\pi\tau^5) \qquad \mbox{as $|\tau|\to \infty$}
\label{eq:asymptgammachi}
\end{equation}
and
\begin{equation}
\gamma_{\tilde{a}_1}[k] = O((2-\sqrt{3})^{k/2})\qquad \mbox{as $k\to \infty$}
\label{eq:asymptgammaat1}
\end{equation}
the convergence of the series in  \eqref{eq:relpsi1} is indeed pretty fast.

From Prop. \ref{prop:decaycor}, we further deduce that
$\gamma_{\psi_0, \psi_0^\HH}(\tau)$ and $\gamma_{\psi_1,
\psi_1^\HH}(\tau)$ decay as $|\tau|^{-5}$ (here, we have $N_0 =
N_1 = 2$). The decay rate of $\gamma_{\psi_1, \psi_1^\HH}$ can be
derived more precisely from \eqref{eq:relpsi1}. Indeed, we have
\begin{align}
& |\tau|^5 \sum_{k=-\infty}^\infty
|\gamma_{\widetilde{a}_1}[k]| |\gamma_{\chi,\chi^\HH}(2\tau-k)| \nonumber \\
& \leq \frac{1}{2} \sum_{k=-\infty}^\infty
|\gamma_{\widetilde{a}_1}[k]| \big(|2\tau-k|^5+|k|^5\big)  |\gamma_{\chi,\chi^\HH}(2\tau-k)|\nonumber\\
& \leq \Big(\sup_{u\in \RR} (|u|^5 |\gamma_{\chi,\chi^\HH}(u)|)+
\sup_{u\in \RR} |\gamma_{\chi,\chi^\HH}(u)|\Big)
\sum_{k=-\infty}^\infty (1+|k|^5)|\gamma_{\widetilde{a}_1}[k]| \nonumber \\
& < \infty,
\end{align}
where the convexity of $|.|^5$ has been used in the first inequality
and the last inequality is a consequence of \eqref{eq:asymptgammachi}
and \eqref{eq:asymptgammaat1}. It can be deduced from the dominated convergence theorem that
\begin{align}
\lim_{|\tau| \to \infty} \tau^5 \gamma_{\psi_1,\psi_1^\HH}(\tau)
= &\sum_{k=-\infty}^\infty
\gamma_{\widetilde{a}_1}[k] \lim_{|\tau| \to \infty}\tau^5 \gamma_{\chi,\chi^\HH}(2\tau-k)\nonumber\\
= & -\frac{3}{64\pi} \sum_{k=-\infty}^\infty \gamma_{\widetilde{a}_1}[k]
= -\frac{3}{64\pi} |\widetilde{A}_1(0)|^2 \nonumber \\
= & -\frac{1}{32\pi}.
\nonumber
\end{align}
Finally, we would like to note that similar expressions can be derived for higher order spline wavelets although the calculations become tedious.

\section{Experimental results}
\label{sec:applinum}

\subsection{Results based on theoretical expressions}
At first, we provide numerical evaluations of the expressions
of the cross-correlation sequences obtained in the previous section when the lag variable (denoted by $\ell$) varies in $\{0,1,2,3\}$. The cross-correlations for lag values in $\{-3,-2,-1\}$ can be deduced from the symmetry properties shown in Section \ref{se:2ndexp1}. We notice that
cubic spline wavelets \cite{Unser_M_1999_spm_spl_pfsp} have not been studied in Section \ref{se:examplewav}, so that the their cross-correlation values have to be computed directly from  (\ref{eq:gammapsi}) and (\ref{eq:gammaphi}).
The results concerning the dyadic case are given in
Table~\ref{tab:simulsdya}. They show
that the cross-correlations between the noise coefficients at the output of a
dual-tree analysis can take significant values (up to 0.64). We also observe that the wavelet choice has a clear influence on the magnitude
of the correlations. Indeed, while Meyer wavelet leads to
results close to the Shannon wavelet, the correlations are weaker
for the Haar wavelet. As expected, spline wavelets yield intermediate cross-correlation values between the Meyer and the Haar cases. \\
Our next results concern the $M$-band case with $M \ge 3$.
Due to the properties of the cross-correlations, the study can be simplified as explained below.
%\begin{description}
\begin{itemize}
\item Shannon wavelets: due to \eqref{eq:symShanMey}, the $M$-band cross-correlations are, up to a possible sign change, equal to the dyadic case cross-correlations (see Table~\ref{tab:simulsdya}).
% for $m\in \{1,...,M-2\}$,
% $\gamma_{\psi_{m+1},\psi_{m+1}^\HH}(\ell)=(-1)^{\ell}\gamma_{\psi_{m},
% \psi_{m}^\HH}(\ell)$. That's why we decided to consider the
% $4$-band case and complementary results for a higher number of
% bands, are easy to deduce.
\item Meyer wavelets: still due to \eqref{eq:symShanMey},
the first $M-2$ cross-correlations of the wavelets are easily deduced from the first one. So, we only need to specify
$\gamma_{\psi_0,\psi_0^\HH}$,
$\gamma_{\psi_1, \psi_1^\HH}$ and
$\gamma_{\psi_{M-1},\psi_{M-1}^\HH}$.
Tables \ref{tab:simulsM1} and \ref{tab:simulsM2} give the related values when $M$ ranges from 3 to 8, the $\epsilon$ parameter being set to its  possible maximum value $(M+1)^{-1}$.
\item Walsh-Hadamard wavelets: when $M=2^{P+1}$, $P\in \NN^*$, $(\psi_m)_{0\le m < M/2}$ is the set of basis functions of the $(M/2)$-band wavelet decomposition. In this way, the results in Table \ref{tab:simulsM3} allow us to evaluate the cross-correlation values for $M \in \{2,4,8\}$.
\end{itemize}
%\end{description}
As shown in Tables \ref{tab:simulsM1} and \ref{tab:simulsM2},
the cross-correlations in the Meyer case remain significant, their magnitudes being even slightly increased as the number of subbands becomes larger. Table \ref{tab:simulsM3} shows that the cross-correlation of Walsh-Hadamard wavelets are much smaller and that they are close to zero when the subband index $m$ is large.
%, **which is consistent with their short support**.

\subsection{Monte Carlo simulations}

A second approach for computing the cross-correlations consists in carrying out a Monte Carlo study. More precisely, a realization of a white standard Gaussian noise  sequence of length
$L=M^J\lfloor\frac{2^{14}}{M^J}\rfloor $ (with $J=3$)
is drawn and its $1D$ dual-tree decomposition
over $J$ resolution levels is performed. Then, the cross-covariances for each subband can be estimated by their classical sample estimates. In our experiments, average values of these cross-correlations are computed over 100 runs.\\
This Monte Carlo study allows us to validate the theoretical expressions we have obtained for several wavelet families in Section \ref{se:examplewav}. In addition,  this approach  can be applied to wavelets whose Fourier transforms do not take a simple form. For instance, we are able to compute the cross-correlation values for symlets \cite{Daubechies_I_1992_book_ten_lw}[p.259] associated to filters of length 8 as well as for $4$-band compactly supported wavelets (here designated as AC) associated to 16-tap filters \cite{Alkin_O_1995_tsp_des_embclpprp}.\\
Table \ref{tab:numdya} shows the estimations of the cross-correlations obtained in the dyadic case, while the results in the $M$-band case with $M\ge 3$ are listed in Tables \ref{tab:numM1} and \ref{tab:numM2}. By comparing these results with the ones in Tables \ref{tab:simulsM3}, \ref{tab:simulsM1} and
\ref{tab:simulsM2}, a good agreement is observed between the theoretical values and the estimated ones for Shannon, Meyer and cubic spline wavelets. For less regular wavelets such as Franklin or Haar wavelets, the agreement remains quite good at coarse resolution ($j=3$) but, at fine resolution ($j=1$), it appears that the correlations are stronger in practice than predicted by the theory. The fact that we use a discrete decomposition instead of the classical analog wavelet framework may account for these differences. Indeed, we
use the implementation of the $M$-band dual-tree decomposition described in \cite{Chaux_C_2006_tip_ima_adtmbwt}, which requires some digital prefilters.  The selectivity of these filters is inherited from the frequency selectivity of the scaling function.
As a side effect, the noise is colored by these prefilters.\\
Some comments can also be made concerning symlets 8
and 4-band AC wavelets. We see that the symlets behave very similarly to Franklin wavelets whereas AC wavelets provide intermediate correlation magnitudes between the $M$-band Meyer and Hadamard cases.

\subsection{Inter-band cross-correlations} \label{se:interbandsimuls}
Although the cross-correlations between primal/dual basis functions corresponding to different subbands have not been much investigated in the previous sections, we provide in this part some  numerical evaluations for them.\\
More precisely, we are interested in studying
$(\gamma_{\psi_m,\psi_{m'}^\HH}(\ell))_{\ell\in\ZZ}$ with $m\neq
m'$, which represents the inter-band cross-correlations. We are
able to compute them thanks to \eqref{eq:crosspsimpsimp} and
\eqref{eq:crosspsimpsi0}.
Numerical results are given in Table \ref{tab:simulsinterb}.\\
%; we verify that Eq. \eqref{eq:boundcrosspsimpsimp} is always verified.\\
Some symmetry properties can be observed, which can be deduced from  \eqref{eq:crosspsimpsimp}, \eqref{eq:crosspsimpsi0} and the specific form of the considered wavelet functions.
Most interestingly, it can be noticed that the inter-band cross-correlations often have a significantly smaller amplitude than the
corresponding intra-band cross-correlations.
%is often unsignificant with respect to $\gamma_{\psi_m,\psi_{m}^\HH}(\ell)$ values;
As expected,
the more frequency-selective the decomposition filters,
the more negligible the values of the inter-band cross-correlations.

\subsection{Two-dimensional experiment}

We aim here at comparing the obtained theoretical expressions of
the two-dimensional cross-covariances with Monte Carlo evaluations
of these second-order statistics. We consider a two-dimensional
3-band Meyer dual-tree wavelet decomposition of a white standard
Gaussian field of size $756\times 756$. The Monte Carlo study is
carried out over $10000$ realizations. The decomposition is
performed over $J=2$ resolution levels and the results are
provided at the coarsest resolution. The covariance fields are
depicted in Fig. \ref{fig:inter2D} as well as the ones derived
from \eqref{eq:internoise},
\eqref{eq_inter_meyer0}-\eqref{eq_inter_meyerM}. For more
readibility, a dashed separation line between the subbands has
been added (for a $3$-band decomposition, $9$ covariance fields
$(\Gamma_{n_{j,\mm},n_{j,\mm}^\HH}[\eell])_{\eell\in\ZZ^2}$ have
to be computed when $\mm \in \{0,1,2\}^2$). We compute these
fields for $\eell \in \{0,1,2,3\}^2$, thus resulting in $16$
covariance values for each subband. Succinctly, each small
gray-scaled square represents the intensity of the
cross-covariance in a given subband $\mathbf{m}$ at spatial
position $\eell$. Comparing theoretical results with numerical
ones (left and right sides of Fig. \ref{fig:inter2D},
respectively), it can be noticed that they are quite similar. In
addition, we observe that, due to the separability of the
covariance fields and \eqref{eq:internoise0}, for all $\mm =
(m_1,m_2)$ and $\eell = (\ell_1,\ell_2)$,
$(\Gamma_{n_{j,\mm},n_{j,\mm}^\HH}[\eell])_{\eell\in\ZZ^2}$
vanishes when either ($m_1 \neq 0$ and $\ell_1 = 0$) or ($m_2 \neq
0$ and $\ell_2 = 0$).

\section{Conclusion}\label{se:conclu}
In this paper, we have investigated the covariance properties of the $M$-band dual-tree wavelet coefficients of wide-sense stationary 1D and 2D random processes.
We have stated a number of results helping to better understand the structure of the correlations introduced by this frame decomposition. These results may be useful in the design of efficient denoising rules using dual-tree wavelet decompositions, when the noise is additive and stationary. In particular, if a pointwise estimator is applied to the pair of primal/dual coefficients at the same location and in the same subband, we have seen that the related components of the noise are uncorrelated. On the contrary, if a block-based estimator is used to take advantage of some spatial neighborhood  of the primal and dual coefficients around some given position
in a subband, noise correlations generally must be taken into account. Recently, this fact has been exploited in the design of an efficient image denoising method using Stein's principle, yielding state-of-the-art performance for multichannel image denoising \cite{Chaux_C_2006_icassp_new_eid2ddtmbd,Chaux_C_2007_tsp_non_sbemid}.
In future work, it would be interesting to extend our analysis to
other classes of random processes. In particular, a similar study
could be undertaken for self-similar processes
\cite{Abry_P_1995_wspp,Wornell_G_1992_tit_wav_brcsssafm} and
processes with stationary increments
\cite{Krim_H_1995_tit_mul_acnp,Averkamp_R_tit_1998_som_dpcwtrp}.

Finally, we would like to note that the expressions of the cross-correlations between the primal and dual wavelets which have been derived in this paper may be of interest for other problems.  Indeed, let
\[
T = \begin{bmatrix}
D\\D^\HH
\end{bmatrix}
\]
denote the dual-tree wavelet decomposition where
$D$ (resp. $D^\HH$) is the primal (resp. dual) wavelet decomposition. The studied cross-correlations then characterize the ``off-diagonal'' terms of the operator
\[
T T^* = \begin{bmatrix}
I & D (D^\HH)^*\\
D^\HH D^* & I
\end{bmatrix},
\]
where $A^*$ denotes the adjoint of a bounded linear operator $A$.
The operator $T T^*$ is encountered in the solution of some inverse problems.

\begin{appendices}

\section{Proof of Proposition \ref{prop:corrgen}} \label{ap:corrgen}
The $M$-band wavelet coefficients of the noise are given by
\begin{align}
\forall m \in \NM,&\forall k\in \ZZ, \qquad \nonumber \\ &n_{j,m}[k] =
\int_{-\infty}^\infty n(x)\;
\frac{1}{M^{j/2}}\psi_m(\frac{x}{M^j}-k)\,dx \nonumber\\
&n_{j,m}^\HH[k] = \int_{-\infty}^\infty n(x)\;
\frac{1}{M^{j/2}}\psi_m^\HH(\frac{x}{M^j}-k)\,dx. \nonumber
\end{align}
For all $(m,m')\in \NMSQ$ and $(k,k') \in \ZZ^2$, we have then
\begin{multline}
\E\{n_{j,m}[k]n_{j,m'}[k']\} =
\int_{-\infty}^\infty\int_{-\infty}^\infty
\E\{n(x)n(x')\} \\ \times \frac{1}{M^{j/2}}\psi_m(\frac{x}{M^j}-k)
\frac{1}{M^{j/2}}\psi_{m'}(\frac{x'}{M^j}-k')dx\,dx'\,.
\nonumber
\end{multline}
After the variable change $\tau = x-x'$, using the definition
of the autocovariance of the noise in \eqref{eq:covb}, we find
that
\begin{multline}
\E\{n_{j,m}[k]n_{j,m'}[k']\} =
\int_{-\infty}^\infty
\Gamma_n(\tau) \\ \Big(\int_{-\infty}^\infty\frac{1}{M^{j/2}}\psi_m(\frac{x}{M^j}-k)
\frac{1}{M^{j/2}}\psi_{m'}(\frac{x-\tau}{M^j}-k')dx\Big)d\tau
\nonumber
\end{multline}
which readily yields
\begin{equation}
\E\{n_{j,m}[k]n_{j,m'}[k']\} =
\int_{-\infty}^\infty
\Gamma_n(\tau) \gamma_{\psi_m,\psi_{m'}}(\frac{\tau}{M^j}+k'-k)\,d\tau.
\nonumber
\end{equation}
Note that, in the above derivations, permutations of the integral symbols/expectation have been performed. For these operations to be valid, some technical conditions are required.
For example, Fubini's theorem \cite[p. 164]{Rudin_W_1987_book_rea_ca} can be invoked provided that
\begin{equation}
\int_{-\infty}^\infty
\Gamma_{|n|}(\tau) \gamma_{|\psi_m|,|\psi_{m'}|}(\frac{\tau}{M^j}+k'-k)\,d\tau < \infty,
\nonumber
\end{equation}
where $\Gamma_{|n|}$ is the autocovariance of $|n|$.\\
Relations \eqref{eq:bhbh1D} and
\eqref{eq:bbh1D} follow from similar arguments.

\section{Proof of Proposition \ref{prop:sym}}\label{ap:sym}
For all $(m,m')\in \NM^2$, $\forall \tau \in \RR,\qquad$
\begin{equation}
\frac{1}{2\pi}\int_{-\infty}^\infty \widehat{\psi}_{m}^\HH(\omega)\big(\widehat{\psi}_{m'}^\HH(\omega))^* e^{\imath \omega \tau}\,d\omega
= \gamma_{\psi_m^\HH,\psi_{m'}^\HH}(\tau).
\label{eq:FourierinvgammaH}
\nonumber
\end{equation}
Since the Fourier transform defines an isometry on $\LL^2(\RR)$,
 it can be deduced from \eqref{eq:FourierinvgammaH} that $\gamma_{\psi_m^\HH,\psi_{m'}^\HH}$ is in
$\LL^2(\RR)$ and its Fourier transform is
$\omega \mapsto \widehat{\psi}_{m}^\HH(\omega)\big(\widehat{\psi}_{m'}^\HH(\omega))^*$.
\footnote{\label{f:L2gamma} As $\{\psi_{m'}^\HH(t-k),k\in \ZZ\}$ is an orthonormal family
of $\LL^2(\RR)$, we have
$|\widehat{\psi}_{m'}^\HH(\omega)|\le 1
$
and $\widehat{\psi}_{m}^\HH\big(\widehat{\psi}_{m'}^\HH)^* \in \LL^2(\RR)$.}
According to \eqref{eq:Hilbertcond} and \eqref{eq:linkpsi0Hpsi0}, when $m=m'=0$ or
$mm'\neq 0$, the latter function is equal to
$\omega \mapsto \widehat{\psi}_{m}(\omega)
\big(\widehat{\psi}_{m'}(\omega))^*$, thus showing that
$\gamma_{\psi_m^\HH,\psi_{m'}^\HH} =
\gamma_{\psi_m,\psi_{m'}}$.
The equality of the covariance sequences defined by \eqref{eq:bb1D}
and \eqref{eq:bhbh1D} straightforwardly follows.

%{eq:symmmplm}
When $m m'\neq 0$,
the Fourier transform of $\gamma_{\psi_m,\psi_{m'}^\HH}$
is equal to
$\omega \mapsto \imath\,\mathrm{sign}(\omega)
\widehat{\psi}_{m}(\omega) \widehat{\psi}_{m'}^*(\omega)$
whose conjuguate is the Fourier transform
of $-\gamma_{\psi_{m'},\psi_m^\HH}$.
This proves \eqref{eq:symmmplm},
%$\widehat{\psi}_{m}(\omega)|^2$, which is an odd function.
%We deduce that $\gamma_{\psi_m,\psi_{m}^\HH}$ is an odd function too.
which combined with  \eqref{eq:bbh1D} leads to
\begin{multline}
\forall \ell \in \ZZ,\\
\Gamma_{n_{j,m},n_{j,m'}^\HH}[\ell] =
-\int_{-\infty}^\infty\Gamma_n(x)
\gamma_{\psi_m',\psi_{m}^\HH}\left(-\frac{x}{M^j}+\ell\right)dx.
\nonumber
\end{multline}
After a variable change and using the fact that $\Gamma_n$ is an even function, we obtain \eqref{eq:symmmplmc}.
% \begin{equation}
% \Gamma_{n_{j,m},n_{j,m}^\HH}(\ell)
% = -\Gamma_{n_{j,m},n_{j,m}^\HH}(-\ell).
% \end{equation}

Consider now the Fourier transform $\omega \mapsto
\widehat{\psi}_0(\omega)(\widehat{\psi}_0^\HH(\omega))^*$
of $\gamma_{\psi_0,\psi_0^\HH}$.
For all $\omega \geq 0$, there exists $k\in \NN$
such as $\omega \in [2k\pi,2(k+1)\pi)$ and, from
\eqref{eq:linkpsi0Hpsi0}, we get
\begin{align}
\widehat{\psi}_0(\omega)(\widehat{\psi}_0^\HH(\omega))^*
& =(-1)^k e^{\imath(d+\frac{1}{2})\omega}\;|\widehat{\psi}_0(\omega)|^2\nonumber \\
&= e^{\imath(2d+1)\omega}\widehat{\psi}_0(-\omega)(\widehat{\psi}_0^\HH(-\omega))^*.
\nonumber
\end{align}
For symmetry reasons, the equality between the first and last terms extends to all $\omega \in \RR$. Coming back to the time domain, we find
\begin{equation}
\forall \tau \in \RR,\qquad
\gamma_{\psi_0,\psi_0^\HH}(\tau) =
\gamma_{\psi_0,\psi_0^\HH}(-\tau-2d-1).
\nonumber
\end{equation}
This shows the symmetry of
$\gamma_{\psi_0,\psi_0^\HH}$ w.r.t. $-d-1/2$.
Eq. \eqref{eq:bbh1D} then yields
\begin{align}
\forall \ell \in \ZZ, \nonumber \\
 \Gamma_{n_{j,0},n_{j,0}^\HH}[\ell]&=
\int_{-\infty}^\infty \Gamma_n(x)
\gamma_{\psi_0,\psi_{0}^\HH}\left(-\frac{x}{M^j}+\ell-2d-1\right)dx\nonumber\\
& = \int_{-\infty}^\infty
\Gamma_n(x)
\gamma_{\psi_0,\psi_{0}^\HH}\left(\frac{x}{M^j}+\ell-2d-1\right)dx\nonumber\\
& = \Gamma_{n_{j,0},n_{j,0}^\HH}[-\ell+2d+1].
\nonumber
\end{align}

\section{White noise case}
\label{ap:whitenoise} Recall that a white noise is not a process
with finite variance, but a generalized random process
\cite{Guelfand_I_1962_book_dis,Yaglom_A_1987_book_cor_dsrfbr}. As
such, some caution must be taken in the application of
\eqref{eq:bb1D}-\eqref{eq:bbh1D}. More precisely, if $n$ is a
white noise, its autocovariance can be viewed as the limit as
$\epsilon >0$ tends to 0 of
\begin{equation}
\Gamma_{n^\epsilon}(\tau) = \frac{\sigma^2}{\sqrt{2\pi}\epsilon} \exp(-\frac{\tau^2}{2 \epsilon^2}),\qquad
 \tau \in \RR.
 \nonumber
\end{equation}
Formula \eqref{eq:bbh1D} can then be used, yielding for all $(m,m')\in \NM^2$ and $(j,\ell)\in \ZZ^2$,
\begin{multline}
\Gamma_{n_{j,m}^\epsilon,n_{j,m'}^{\epsilon\HH}}[\ell] = \\
\sigma^2\int_{-\infty}^\infty  \frac{1}{\sqrt{2\pi}}
\exp(-\frac{x^2}{2})\,
\gamma_{\psi_m,\psi_{m'}^\HH}\left(\frac{\epsilon
x}{M^j}-\ell\right)dx.
\nonumber
\end{multline}
Since $\psi_m$ and $\psi_{m'}^\HH$ are in $\LL^2(\RR)$, $\gamma_{\psi_m,\psi_{m'}^\HH}$ is a bounded continuous  function. By applying Lebesgue dominated convergence theorem, we deduce that
\begin{align}
&\Gamma_{n_{j,m},n_{j,m'}^\HH}[\ell] = \lim_{\epsilon\to 0}
\Gamma_{n_{j,m}^\epsilon,n_{j,m'}^{\epsilon\HH}}[\ell] \nonumber \\
& =\sigma^2 
\int_{-\infty}^\infty \frac{1}{\sqrt{2\pi}}
\exp(-\frac{x^2}{2})\,
\lim_{\epsilon \to 0}\gamma_{\psi_m,\psi_{m'}^\HH}\left(\frac{\epsilon x}{M^j}-\ell\right)dx\nonumber\\
 & = \sigma^2  \gamma_{\psi_m,\psi_{m'}^\HH} (-\ell) \int_{-\infty}^\infty  \frac{1}{\sqrt{2\pi}} \exp(-\frac{x^2}{2})dx
\nonumber
\end{align}
which leads to \eqref{eq:internoise1D}. Equations \eqref{eq:internoiseb1D} are similarly obtained by further noticing that, due to the orthonormality property, $\gamma_{\psi_m,\psi_{m'}}(-\ell) = \gamma_{\psi_m^\HH,\psi_{m'}^\HH}(-\ell) = \delta_{m-m'}\delta_\ell$.

\section{Proof of Proposition \ref{prop:posttransf}} \label{ap:posttransf}
From \eqref{eq:transfnoise1} and \eqref{eq:transfnoise2} defining
the unitary transform applied to the detail noise coefficients
$(n_{j,\mm}[\kk])_{\kk \in \ZZ^2}$ and $(n_{j,\mm}^\HH[\kk])_{\kk
\in \ZZ^2}$:
\begin{align}
\E\{w_{j,\mm}[\kk] & w_{j,\mm}[\kk']\} = \nonumber \\
& \frac{1}{2} \Big( \E\{n_{j,\mm}[\kk]n_{j,\mm}[\kk']\} + \E\{n_{j,\mm}[\kk]n_{j,\mm}^\HH[\kk']\} \nonumber \\ &+ \E\{n_{j,\mm}^\HH[\kk]n_{j,\mm}[\kk']\} +  \E\{n_{j,\mm}^\HH[\kk]n_{j,\mm}^\HH[\kk']\} \Big).
\nonumber
\end{align}
Using \eqref{eq:bbbhbh} and the evenness of $\Gamma_{n_{j,\mm},n^\HH_{j,\mm}}$, one can easily deduce \eqref{eq:2Drot1}.
Concerning \eqref{eq:2Drot2}, we proceed in the same way, taking into account the relation:
\begin{align}
\E\{w_{j,\mm}^\HH[\kk] & w_{j,\mm}^\HH[\kk']\} = \nonumber \\
& \frac{1}{2} \Big( \E\{n_{j,\mm}[\kk]n_{j,\mm}[\kk']\} - \E\{n_{j,\mm}[\kk]n_{j,\mm}^\HH[\kk']\} \nonumber \\ &- \E\{n_{j,\mm}^\HH[\kk]n_{j,\mm}[\kk']\} +  \E\{n_{j,\mm}^\HH[\kk]n_{j,\mm}^\HH[\kk']\} \Big).
\nonumber
\end{align}
Finally, noting that
\begin{align}
\E\{w_{j,\mm}[\kk]& w_{j,\mm}^\HH[\kk']\} = \nonumber \\ 
& \frac{1}{2} \Big( \E\{n_{j,\mm}[\kk]n_{j,\mm}[\kk']\} - \E\{n_{j,\mm}[\kk]n_{j,\mm}^\HH[\kk']\} \nonumber \\ &+ \E\{n_{j,\mm}^\HH[\kk]n_{j,\mm}[\kk']\} - \E\{n_{j,\mm}^\HH[\kk]n_{j,\mm}^\HH[\kk']\} \Big)
\nonumber
\end{align}
and, invoking the same arguments, we see that $w_{j,\mm}[\kk]$ and $w_{j,\mm}^\HH[\kk']$ are uncorrelated random variables.

\section{Proof of Proposition \ref{prop:decaycor}}
\label{ap:decaycor}
Since $\psi_m \in \LL^2(\RR)$, we have
\begin{multline}
\forall \tau\in\RR,\\
\gamma_{\psi_m,\psi_m}(\tau) = \frac{1}{2\pi}
\int_{-\infty}^\infty |\widehat{\psi}_m(\omega)|^2 e^{\imath \omega\tau}\,d\omega.
\nonumber
\end{multline}
% As $\widehat{\psi}_m$ is $2N_m+1$ times continuously
% differentiable
% on $\RR$, so is $|\widehat{\psi}_m|^2$. Leibniz formula
% allows us to express its derivative of order $n \in \{0,\ldots,2N_m+1\}$ as
% \begin{equation}
% (|\widehat{\psi}_m|^2)^{(q)} =\sum_{\ell=0}^n
% \binom{n}{\ell}
% (\widehat{\psi}_m)^{(\ell)} (\widehat{\psi}_m^*)^{(n-\ell)}.
% \label{eq:leibniz}
% \end{equation}
% Furthermore, for all $\ell \in \{0,\ldots,n\}$, $\widehat{\psi}_m^{(\ell)}
% \in \LL^2(\RR)$ and, consequently,
% $(|\widehat{\psi}_m|^2)^{(q)}\in \LL^1(\RR)$.
Furthermore, $|\widehat{\psi}_m|^2$ is $2N_m+1$ times continuously
differentiable and for all $q \in \{0,\ldots,2N_m+1\}$,
$(|\widehat{\psi}_m|^2)^{(q)}\in \LL^1(\RR)$.
It can be deduced \cite{Schwartz_L_1997_book_ana_atm}[p. 158--159] that
\begin{multline}
\forall \tau\in\RR,\\
(-\imath\tau)^{2N_m+1} \gamma_{\psi_m,\psi_m}(\tau) = \frac{1}{2\pi}
\int_{-\infty}^\infty (|\widehat{\psi}_m|^2)^{(2N_m+1)}(\omega)\,e^{\imath \omega\tau}\,d\omega
\nonumber
\end{multline}
which leads to
\begin{multline}
\forall \tau\in\RR,\\
|\tau|^{2N_m+1} |\gamma_{\psi_m,\psi_m}(\tau)| \leq \frac{1}{2\pi}
\int_{-\infty}^\infty \big|(|\widehat{\psi}_m|^2)^{(2N_m+1)}(\omega)\big|\,d\omega.
\label{eq:boundasymptpsimpsim}
\end{multline}

Let us now consider the cross-correlation
functions $\gamma_{\psi_m,\psi_m^\HH}$ with $m \neq 0$.
% \footnote{Proceeding as for
% $\gamma_{\psi_m,\psi_m}$ would not give the appropriate decay rate due to the fact
% that the Fourier transform of $\gamma_{\psi_m,\psi_m^\HH}$
% is the function $\omega \mapsto \imath\,\mathrm{sign}(\omega) |\widehat{\psi}_m(\omega)|^2$
% which can only be proved to be $2N-1$ times continuously
% differentiable on $\RR$.}
% Applying Lemma \ref{eq:lempsimnot0} to $|\widehat{\psi}_m|^2$, we get
% for all $\tau \in \RR$,
% \begin{align}
% &\int_0^\infty (|\widehat{\psi}_m|^2)^{(2N+1)}(\omega) \cos(\omega\tau)\,d\omega\nonumber\\
% = & \sum_{k=0}^{N} (-1)^{k+1} (|\widehat{\psi}_m|^2)^{(2N-2k)}(0)\;\tau^{2k}+(-1)^{N} \tau^{2N+1} \int_0^\infty |\widehat{\psi}_m(\omega)|^2 \sin(\omega \tau)\,d\omega.
% \label{eq:lemmaconsmn0}
% \end{align}
% The fact that $|\widehat{\psi}_m(\omega)|^2 = O(\omega^{2N})$ as $\omega \to 0$,
%  shows that, for all $q \in \{0,\ldots,2N-1\}$, $(|\widehat{\psi}_m|^2)^{(q)}(0) = 0$.
% Eq. \eqref{eq:lemmaconsmn0} thus reduces to
% \begin{align}
% &\int_0^\infty (|\widehat{\psi}_m|^2)^{(2N+1)}(\omega) \cos(\omega\tau)\,d\omega\nonumber\\
% = & -(|\widehat{\psi}_m|^2)^{(2N)}(0)+(-1)^{N} \tau^{2N+1} \int_0^\infty |\widehat{\psi}_m(\omega)|^2 \sin(\omega \tau)\,d\omega.
% \end{align}
% Using \eqref{eq:gammapsi}, we deduce that
% \begin{equation}
% \forall \tau\in\RR,\qquad
% |\tau|^{2N+1} |\gamma_{\psi_m,\psi_m^\HH}(\tau)| \leq
% \frac{1}{\pi}
% \Big(\int_0^\infty |(|\widehat{\psi}_m|^2)^{(2N+1)}(\omega)|\,d\omega+
% |(|\widehat{\psi}_m|^2)^{(2N)}(0)|\Big).
% \end{equation}
Similarly, when $m\neq 0$, we have
\begin{multline}
\forall \tau\in\RR,\\
\gamma_{\psi_m,\psi_m^\HH}(\tau) = \frac{1}{2\pi}
\int_{-\infty}^\infty \alpha(\omega)|\widehat{\psi}_m(\omega)|^2 e^{\imath \omega\tau}\,d\omega,
\label{eq:gammapsimpsimHfreq}
\end{multline}
where $\alpha(\omega) = \imath\,\mathrm{sign}(\omega)$.
The function $\omega \mapsto \alpha(\omega) |\widehat{\psi}_m(\omega)|^2$ is $2N_m+1$ times continuously
differentiable on $\RR^*$, where its derivative of order $q \in \{0,\ldots,2N_m+1\}$ is
\begin{equation}
(\alpha|\widehat{\psi}_m|^2)^{(q)} = \alpha\; (|\widehat{\psi}_m|^2)^{(q)}.
\label{eq:dernalphapsi}
\end{equation}
%Combining \eqref{eq:leibniz} with
Due to the fact that $|\widehat{\psi}_m(\omega)|^2 =
O(\omega^{2N_m})$ as $\omega \to 0$, we have for all $q \in
\{0,\ldots,2N_m-1\}$, $(|\widehat{\psi}_m|^2)^{(q)}(0) = 0$. From
\eqref{eq:dernalphapsi}, we deduce that the function
$(\alpha|\widehat{\psi}_m|^2)^{(q)}$ admits limits on the left
side and on the right side of 0, which are both equal to 0. This
allows to conclude that $\alpha|\widehat{\psi}_m|^2$ is $2N_m-1$
times continuously differentiable on $\RR$, its $2N_m-1$ first
derivatives vanishing at 0. Besides, $(\alpha
|\widehat{\psi}_m|^2)^{(2N_m-1)}$ is continuously differentiable
on $(-\infty,0]$ and on $[0,\infty)$ ($(\alpha
|\widehat{\psi}_m|^2)^{(2N_m)}$ may be discontinuous at 0). Using
the same arguments as for $\gamma_{\psi_m,\psi_m}$, this allows us
to claim that
\begin{align}
& \forall \tau\in\RR,\nonumber \\
& (-\imath \tau)^{2N_m} \gamma_{\psi_m,\psi_m^\HH}(\tau) = \frac{1}{2\pi}
\int_{-\infty}^\infty \alpha(\omega) (|\widehat{\psi}_m|^2)^{(2N_m)}(\omega) e^{\imath \omega\tau}\,d\omega\nonumber\\
& = -\frac{1}{\pi}
\int_0^\infty (|\widehat{\psi}_m|^2)^{(2N_m)}(\omega) \sin(\omega\tau)\,d\omega.
\label{eq:tau2NpsimpsimH}
\end{align}
We can note that
$\lim_{\omega \to \infty}  (|\widehat{\psi}_m|^2)^{(2N_m)}(\omega) \in \RR$
as it is equal to $(|\widehat{\psi}_m|^2)^{(2N_m)}_+(0)+\int_{0}^\infty (|\widehat{\psi}_m|^2)^{(2N_m+1)}(\nu)\,d\nu$
where $(|\widehat{\psi}_m|^2)^{(2N_m)}_+(0)$ denotes the right-hand side derivative of order $2N_m$ of $|\widehat{\psi}_m|^2$ at 0.
Since $(|\widehat{\psi}_m|^2)^{(2N_m)}\in \LL^1([0,\infty))$,
the previous limit is necessarily zero. Using this fact and integrating by part
in \eqref{eq:tau2NpsimpsimH}, we find that, for all $\tau \in \RR$,
\begin{multline}
\tau \int_0^\infty (|\widehat{\psi}_m|^2)^{(2N_m)}(\omega) \sin(\omega\tau)\,d\omega
= (|\widehat{\psi}_m|^2)^{(2N_m)}_+(0)\\ + 
\int_0^\infty (|\widehat{\psi}_m|^2)^{(2N_m+1)}(\omega) \cos(\omega\tau)\,d\omega.
\nonumber
\end{multline}
Combining this expression with \eqref{eq:tau2NpsimpsimH}, we deduce that
\begin{multline}
\forall \tau\in\RR,\\
|\tau|^{2N_m+1} |\gamma_{\psi_m,\psi_m^\HH}(\tau)| \leq
\frac{1}{\pi}
\Big(\int_0^\infty |(|\widehat{\psi}_m|^2)^{(2N_m+1)}(\omega)|\,d\omega \\+
|(|\widehat{\psi}_m|^2)^{(2N_m)}_+(0)|\Big).
\label{eq:boundasymptpsimpsimh}
\end{multline}
% \begin{equation}
% \forall \tau\in\RR,\qquad
% |\tau|^{2N} |\gamma_{\psi_m,\psi_m^\HH}(\tau)| \leq \frac{1}{2\pi}
% \int_{-\infty}^\infty \big|(|\widehat{\psi}_m|^2)^{(2N)}(\omega)\big|\,d\omega.
% \label{eq:boundasymptpsimpsimh}
% \end{equation}

Let us now study the case when $m=0$. Eq. \eqref{eq:gammapsimpsimHfreq} still holds, but as shown
by  \eqref{eq:linkpsi0Hpsi0}, $\alpha$ takes a more complicated form:
\begin{multline}
\forall k \in \ZZ ,\; \forall \omega \in [2k\pi,2(k+1)\pi),\\
\alpha(\omega) =
\begin{cases}
(-1)^k e^{\imath(d+\frac{1}{2})\omega} & \mbox{if $k \geq 0$}\\
(-1)^{k+1} e^{\imath(d+\frac{1}{2})\omega} & \mbox{otherwise.}
\end{cases}
\nonumber
\end{multline}
So, the function $\alpha$ as well as its derivatives of any order
now exhibit discontinuities at $2 k\pi$ where $k\in \ZZ^*$. However, from \eqref{eq:twoscalef} and the low-pass condition
$\widehat{\psi}_0(0) = 1$, we have, for all $m \neq 0$,
\begin{equation}
H_m(\omega) = O(\omega^{N_m}), \qquad \mbox{as $\omega \to 0$.}
\nonumber
\end{equation}
As a consequence of the para-unitary condition \eqref{eq:paraunitarity},
we get
\begin{equation}
\sum_{m=0}^{M-1} |H_m(\omega)|^2 = M
\nonumber
\end{equation}
and
\begin{equation}
\sum_{p=0}^{M-1} |H_0(\omega+p\frac{2\pi}{M})|^2 = M
\nonumber
\end{equation}
which allows to deduce that
\begin{equation}
\forall p \in \NMs,\qquad H_0(\omega+p\frac{2\pi}{M}) =
O(\omega^{N_0}).
\nonumber
\end{equation}
From \eqref{eq:twoscalef}, it can be concluded that
\begin{equation}
\forall k \in \ZZ^*,\; \widehat{\psi}_0(\omega+2k\pi) = O(\omega^{N_0}),
\; \mbox{as $\omega \to 0$.}
\label{eq:equivpsi02kpi}
\end{equation}
The derivatives of order $q \in \{0,\ldots,2N_0+1\}$ of $\alpha |\widehat{\psi}_0|^2$ over
$\RR \setminus \{2k\pi ,k\in \ZZ^*\}$ are given by
\begin{equation}
(\alpha |\widehat{\psi}_0|^2)^{(q)} = \sum_{\ell = 0}^q
\binom{q}{\ell}
(\alpha)^{(\ell)} (|\widehat{\psi}_0|^2)^{(q-\ell)},
\label{eq:derivnpsi0H}
\nonumber
\end{equation}
where
\begin{multline}
\forall k \in \ZZ ,\; \forall \omega \in (2k\pi,2(k+1)\pi),\\
\alpha^{(\ell)}(\omega) =
\begin{cases}
(-1)^k \imath^\ell (d+\frac{1}{2})^\ell\;e^{\imath(d+\frac{1}{2})\omega} & \mbox{if $k \geq 0$}\\
(-1)^{k+1} \imath^\ell (d+\frac{1}{2})^\ell\;e^{\imath(d+\frac{1}{2})\omega} & \mbox{otherwise.}
\end{cases}
\nonumber
\end{multline}
We deduce that, for all $q \in \{0,\ldots,2N_0+1\}$,
$(\alpha |\widehat{\psi}_0|^2)^{(q)} \in \LL^1(\RR)$.
Furthermore, combining \eqref{eq:equivpsi02kpi} with \eqref{eq:derivnpsi0H}
allows us to show that, for all $q \in \{0,\ldots,2N_0-1\}$, the derivative
of order $q$ of
$\alpha |\widehat{\psi}_0|^2$ at $2k\pi$, $k\in \ZZ^*$, is defined and equal to 0. Consequently,
$\alpha |\widehat{\psi}_0|^2$ is $2N_0-1$ times continuously differentiable on $\RR$ while $(\alpha |\widehat{\psi}_0|^2)^{(2N_0-1)}$ is continuously differentiable on
$\cup_{k\in \ZZ} (2k\pi,2(k+1)\pi)$. Similarly to the case $m\neq 0$, this leads to
\begin{align}
& \forall \tau\in\RR,\nonumber \\
& (-\imath \tau)^{2N_0} \gamma_{\psi_0,\psi_0^\HH}(\tau) = \frac{1}{2\pi}
\int_{-\infty}^\infty (\alpha |\widehat{\psi}_0|^2)^{(2N_0)}(\omega)\,e^{\imath \omega\tau}\,d\omega\nonumber \\
& = \frac{1}{2\pi} \sum_{k=-\infty}^\infty
\int_{2k\pi}^{2(k+1)\pi} (\alpha |\widehat{\psi}_0|^2)^{(2N_0)}(\omega)\,e^{\imath \omega\tau}\,d\omega.
\end{align}
By integration by part, we deduce that
\begin{align}
& \forall \tau\in\RR,\nonumber \\
&(-\imath \tau)^{2N_0+1} \gamma_{\psi_0,\psi_0^\HH}(\tau) = \frac{1}{2\pi} \nonumber \\
& \qquad \times \Big(\int_{-\infty}^\infty (\alpha |\widehat{\psi}_0|^2)^{(2N_0+1)}(\omega)\,e^{\imath \omega\tau}\,d\omega
+ \beta\Big)
\label{eq:tauN+1gammapsi0psi0H}\\
&\beta=\sum_{k\in \ZZ^*} \big((\alpha |\widehat{\psi}_0|^2)^{(2N_0)}_+(2k\pi)
- (\alpha |\widehat{\psi}_0|^2)^{(2N_0)}_-(2k\pi)\big)\,e^{\imath 2\pi k\tau},
\label{eq:seriesbeta}
\end{align}
where $(\alpha |\widehat{\psi}_0|^2)^{(2N_0)}_+(\omega_0)$ (resp.
$(\alpha |\widehat{\psi}_0|^2)^{(2N_0)}_-(\omega_0)$) denotes the right-side (resp.
left-side) derivative of order $2N_0$ of $\alpha |\widehat{\psi}_0|^2$ at $\omega_0 \in \RR$.\footnote{The series in \eqref{eq:seriesbeta} is convergent since all the other terms in \eqref{eq:tauN+1gammapsi0psi0H} are finite.}
We conclude that
\begin{multline}
\forall \tau\in\RR, \quad
|\tau|^{2N_0+1}|\gamma_{\psi_0,\psi_0^\HH}(\tau)| =\\ \frac{1}{2\pi}
\Big(\int_{-\infty}^\infty \big|(\alpha |\widehat{\psi}_0|^2)^{(2N_0+1)}(\omega)\big|\,d\omega
 + |\beta|\Big).
\label{eq:boundasymptpsi0psi0h}
\end{multline}
In summary, we have proved that \eqref{eq:decayasymptcor1}
and  \eqref{eq:decayasymptcor2} hold,
the constant $C$ being chosen equal to the maximum value of the left-hand side terms in the inequalities \eqref{eq:boundasymptpsimpsim}, \eqref{eq:boundasymptpsimpsimh} and
\eqref{eq:boundasymptpsi0psi0h}.

\section{Proof of Proposition \ref{prop:decaycov1D}}\label{ap:decaycov1D}
Let $m\in \NM$. Since $\psi_m$ is a unit norm function of
$\LL^2(\RR)$, the function $\gamma_{\psi_m,\psi_m^\HH}$ is upper
bounded by 1. As $\gamma_{\psi_m,\psi_m^\HH}$ further satisfies
\eqref{eq:decayasymptcor2}, it can be deduced that
\begin{equation}
\forall \tau \in \RR,\qquad
|\gamma_{\psi_m,\psi_m^\HH}(\tau)| \leq \frac{1+C}{1+|\tau|^{2N_m+1}}.
\label{eq:upboundbisgammapsimpsimh}
\end{equation}
The same upper bound holds for $\gamma_{\psi_m,\psi_m}$.

For a white noise, the property then appears as a straightforward consequence
of the latter inequality and Eqs.~\eqref{eq:internoiseb1D} and
\eqref{eq:internoise1D}.

Let us next turn our attention to processes with exponentially
decaying covariance sequences. From \eqref{eq:bbh1D},
\eqref{eq:expdecay} and \eqref{eq:upboundbisgammapsimpsimh}, we
deduce that
\begin{multline}
\forall \ell \in \ZZ,\qquad\\ |\Gamma_{n_{j,m},n_{j,m}^\HH}[\ell]|
\leq A (1+C) \int_{-\infty}^\infty \frac{e^{-\alpha
|x|}}{1+|M^{-j} x -\ell|^{2N_m+1}}\,dx. \label{eq:ACmaj}
\end{multline}
As the left-hand side of \eqref{eq:ACmaj} corresponds to an even function of $\ell$, without loss of generality, it can be assumed that
$\ell \geq 0$. We can decompose the above integral as
\begin{multline}
\int_{-\infty}^\infty \frac{e^{-\alpha |x|}}{1+|M^{-j} x -\ell|^{2N_m+1}}\,dx \\
= \int_0^\infty \frac{e^{-\alpha x}}{1+(M^{-j} x +\ell)^{2N_m+1}}\,dx \\
+ \int_0^\infty \frac{e^{-\alpha x}}{1+|M^{-j} x -\ell|^{2N_m+1}}\,dx \,.
\nonumber
\end{multline}
The first integral in the right-hand side can be upper bounded as follows
\begin{multline}
 \int_0^\infty \frac{e^{-\alpha x}}{1+(M^{-j} x +\ell)^{2N_m+1}}\,dx \\
\leq (1+\ell^{2N_m+1})^{-1}\int_0^\infty e^{-\alpha x}\,dx \\ = \alpha^{-1}(1+\ell^{2N_m+1})^{-1}.
\nonumber
\end{multline}
Let $\epsilon \in (0,1)$ be given. The second integral can be decomposed as
\begin{multline}
\int_0^\infty \frac{e^{-\alpha x}}{1+|M^{-j} x -\ell|^{2N_m+1}}\,dx \\
= \int_0^{\epsilon M^j\ell} \frac{e^{-\alpha x}}{1+(\ell-M^{-j} x)^{2N_m+1}}\,dx \\
+ \int_{\epsilon M^j\ell}^\infty \frac{e^{-\alpha x}}{1+|M^{-j} x-\ell|^{2N_m+1}}\,dx.
\nonumber
\end{multline}
Furthermore, we have
\begin{align}
\int_0^{\epsilon M^j\ell} & \frac{e^{-\alpha x}}{1+(\ell-M^{-j} x)^{2N_m+1}}\,dx \nonumber \\
&\leq (1+(1-\epsilon)^{2N_m+1} \ell^{2N_m+1})^{-1} \int_0^{\epsilon M^j\ell} e^{-\alpha x}\,dx\nonumber\\
&\leq \alpha^{-1} (1-\epsilon)^{-2N_m-1} (1+\ell^{2N_m+1})^{-1}\\
\int_{\epsilon M^j\ell}^\infty & \frac{e^{-\alpha x}}{1+|M^{-j} x-\ell|^{2N_m+1}}\,dx \nonumber \\ &\leq \int_{\epsilon M^j\ell}^\infty e^{-\alpha x}\,dx
= \alpha^{-1} e^{-\alpha \epsilon M^j\ell}.
\nonumber
\end{align}
From the above inequalities, we obtain
\begin{multline}
\forall \ell \in \NN^*,\\ 
|\Gamma_{n_{j,m},n_{j,m}^\HH}[\ell]|
\leq A (1+C)
\alpha^{-1}\big((1+(1-\epsilon)^{-2N_m-1})\\\times (1+\ell^{2N_m+1})^{-1}+e^{-\alpha
\epsilon M^j\ell}\big).
\nonumber
\end{multline}
As $\lim_{\ell\to \infty} (1+\ell^{2N_m+1})e^{-\alpha \epsilon M^j\ell}=0$, it readily follows
that there exists $\widetilde{C} \in \RR_+$ such that
\eqref{eq:boundcovannh} holds.

The left-hand side of \eqref{eq:ACmaj} being also an upper bound
for $|\Gamma_{n_{j,m},n_{j,m}}[\ell]|$, $\ell \neq 0$,
\eqref{eq:boundcovann} is proved at the same time.

\section{Proof of Proposition \ref{prop:asymptj}}
\label{ap:asymptj}
Let us prove \eqref{eq:internoise1Dasym}, the proof of
\eqref{eq:internoiseb1Dasym} being quite similar.
We first note that $\widehat{\psi}_m(\widehat{\psi}_{m'}^\HH)^*$ and therefore $\gamma_{\psi_m,\psi_{m'}^\HH}$
belong to $\LL^2(\RR)$ (see footnote \ref{f:L2gamma}).
% , since
% $\psi_m \in \LL^1(\RR)$ and $\psi_{m'}^\HH \in \LL^2(\RR)$, $\gamma_{\psi_m,\psi_{m'}^\HH} \in \LL^2(\RR)$.
Applying Parseval's equality to \eqref{eq:bbh1D}, we obtain
for all $\ell \in \ZZ$,
\begin{align}
& \Gamma_{n_{j,m},n_{j,m'}^\HH}[\ell] \nonumber \\
&= \frac{1}{2\pi}
\int_{-\infty}^\infty
\widehat{\Gamma}_n(\omega)\,M^j\widehat{\psi}_m^*(M^j\omega)
\widehat{\psi}_{m'}^\HH(M^j \omega)e^{\imath M^j  \ell \omega} d\omega\nonumber \\
& = \frac{1}{2\pi} \int_{-\infty}^\infty \widehat{\Gamma}_n\Big(\frac{\omega}{M^j}\Big)\,\widehat{\psi}_m^*(\omega)
\widehat{\psi}_{m'}^\HH(\omega)e^{\imath \ell \omega} d\omega.
\nonumber
\end{align}
As $\Gamma_n \in \LL^1(\RR)$, the spectrum density $\widehat{\Gamma}_n$ is a bounded continuous function. According to Lebesgue dominated convergence theorem,
\begin{align}
& \lim_{j\to\infty}\Gamma_{n_{j,m},n_{j,m'}^\HH}[\ell] \nonumber \\ &=
\frac{1}{2\pi} \int_{-\infty}^\infty
\lim_{j\to\infty}\widehat{\Gamma}_n\Big(\frac{\omega}{M^j}\Big)\,\widehat{\psi}_m^*(\omega)
\widehat{\psi}_{m'}^\HH(\omega)e^{\imath \ell \omega} d\omega\nonumber\\
&= \frac{\widehat{\Gamma}_n(0)}{2\pi} \int_{-\infty}^\infty \widehat{\psi}_m^*(\omega)
\widehat{\psi}_{m'}^\HH(\omega)e^{\imath \ell \omega} d\omega = \widehat{\Gamma}_n(0)
\gamma_{\psi_m,\psi_{m'}^\HH}(-\ell).
\nonumber
\end{align}

\section{Cross-correlations for Meyer wavelets}
\label{ap:Meyer}
Substituting \eqref{eq:Meyer0} in \eqref{eq:gammaphi}, we obtain, for all
$\tau \in \RR$,
\begin{align}
\gamma_{\psi_0,\psi_0^\HH}(\tau)=&\frac{1}{\pi}\Big(\int_{0}^{\pi(1-\epsilon)}\cos{\Big(\omega(d+\frac{1}{2}+ \tau)\Big)} d\omega \nonumber \\ + \int_{\pi(1-\epsilon)}^{\pi(1+\epsilon)} & W^2\big(\frac{\omega}{2\pi\epsilon}-\frac{1-\epsilon}{2\epsilon}\big)\cos{\Big(\omega(d+\frac{1}{2}+ \tau)\Big)}d\omega\Big)\nonumber \\
= &(1-\epsilon)\mathrm{sinc}\Big(\pi(1-\epsilon)(d+\frac{1}{2}+ \tau)\Big)
\nonumber \\ + \epsilon \int_{-1}^1 & 
W^2\Big(\frac{1+\theta}{2}\Big) \cos{\Big(\pi(\epsilon \theta +1)\big(d+\frac{1}{2}+ \tau\big)\Big)} d\theta.
\label{eq:meyer0interm}
\end{align}
Using \eqref{eq:propwindow}, we get
\begin{multline}
\int_{-1}^0
W^2\Big(\frac{1+\theta}{2}\Big) \cos{\Big(\pi(\epsilon \theta +1)\big(d+\frac{1}{2}+ \tau\big)\Big)} d\theta \\ =
\int_0^1  \cos{\Big(\pi(\epsilon \theta -1)\big(d+\frac{1}{2}+ \tau\big)\Big)} d\theta\\
- \int_0^1 W^2\Big(\frac{1+\theta}{2}\Big) \cos{\Big(\pi(\epsilon \theta -1)\big(d+\frac{1}{2}+ \tau\big)\Big)} d\theta.
\label{eq:int-10int01}
\end{multline}
This allows us to rewrite \eqref{eq:meyer0interm} as
\begin{multline}
\gamma_{\psi_0,\psi_0^\HH}(\tau)= \mathrm{sinc}\Big(\pi(d+\frac{1}{2}+ \tau)\Big)
-\sin{\Big(\pi \big(d+\frac{1}{2}+ \tau\big)\Big)} \\
\times I_\epsilon\Big(d+\frac{1}{2}+ \tau\Big).
\end{multline}
After simplification, \eqref{eq_inter_meyer0} follows.

According to \eqref{eq:gammapsi} and \eqref{eq:Meyerm}, we have
for all $m \in \{1,\ldots,M-2\}$ and $\tau \in \RR^*$,
\begin{align}
& \gamma_{\psi_m,\psi_m^\HH}(\tau) \nonumber \\
=&-\frac{1}{\pi}\Big(
\int_{\pi(m-\epsilon)}^{\pi(m+\epsilon)}W^2\Big(
\frac{m+\epsilon}{2\epsilon}-\frac{\omega}{2\pi\epsilon}\Big)\sin(\omega \tau) d\omega \nonumber \\ & +
\int_{\pi(m+\epsilon)}^{\pi(m+1-\epsilon)}\sin(\omega \tau) d\omega \nonumber\\ &+
\int_{\pi(m+1-\epsilon)}^{\pi(m+1+\epsilon)}W^2\Big(\frac{\omega}{2\pi\epsilon}
-\frac{m+1-\epsilon}{2\epsilon}\Big)\sin(\omega \tau) d\omega\Big)\nonumber\\
= & \frac{\cos\big(\pi(m+1-\epsilon)\tau\big)-\cos\big(\pi(m+\epsilon)\tau\big)}{\pi\tau} \nonumber \\ & +\epsilon
\int_{-1}^1 W^2\Big(\frac{1+\theta}{2}\Big)  \sin\big(\pi(\epsilon \theta-m) \tau\big)d\theta\nonumber\\
& -\epsilon\int_{-1}^1 W^2\Big(\frac{1+\theta}{2}\Big) \sin\big(\pi(\epsilon \theta+m+1) \tau\big)d\theta.
\nonumber
\end{align}
By proceeding similarly to \eqref{eq:meyer0interm}-\eqref{eq:int-10int01}, we find
\begin{equation}
\gamma_{\psi_m,\psi_m^\HH}(\tau)
= \big(\cos(\pi(m+1)\tau)-\cos(\pi m\tau)\big)
\Big(\frac{1}{\pi\tau}- I_\epsilon(\tau)\Big).
\nonumber
\end{equation}
When $\tau$ is an integer, this expression further simplifies in \eqref{eq_inter_meyerm}.

Finally, when $m=M-1$, we have, for all $\tau \in \RR^*$,
\begin{align}
& \gamma_{\psi_{M-1},\psi_{M-1}^\HH}(\tau) \nonumber \\
=&-\frac{1}{\pi}\Big(
\int_{\pi(M-1-\epsilon)}^{\pi(M-1+\epsilon)}W^2\Big(
\frac{M-1+\epsilon}{2\epsilon}-\frac{\omega}{2\pi\epsilon}\Big)\sin(\omega \tau) d\omega \nonumber \\ & +
\int_{\pi(M-1+\epsilon)}^{\pi M(1-\epsilon)}\sin(\omega \tau) d\omega \nonumber\\ &+
\int_{\pi M(1-\epsilon)}^{\pi M(1+\epsilon)}W^2\Big(\frac{\omega}{2\pi\epsilon M}
-\frac{1-\epsilon}{2\epsilon}\Big)\sin(\omega \tau) d\omega\Big)\nonumber\\
= & \frac{\cos\big(\pi M(1-\epsilon)\tau\big)-\cos\big(\pi(M-1+\epsilon)\tau\big)}{\pi\tau} \nonumber \\ & + \epsilon \int_{-1}^1 W^2\Big(\frac{1+\theta}{2}\Big) \sin\big(\pi(\epsilon \theta-M+1) \tau\big)d\theta\nonumber\\
& -\epsilon M \int_{-1}^1 W^2\Big(\frac{1+\theta}{2}\Big) \sin\big(\pi M(\epsilon \theta+1) \tau\big)d\theta\nonumber\\
= & \frac{\cos\big(\pi M
\tau\big)-\cos\big(\pi(M-1)\tau\big)}{\pi\tau}+\cos\big(\pi(M-1)\tau\big)
I_\epsilon(\tau) \nonumber \\
 & -\cos(\pi M \tau)  I_{M\epsilon}(\tau).
 \nonumber
\end{align}
This yields \eqref{eq_inter_meyerM}.

\section{Proof of Proposition \ref{prop:recurpacket}}
\label{ap:recurpacket}
Let $m\in \NN^*$.
Given  (\ref{eq:gammapsi}), \eqref{eq:packet} leads to
\begin{align}
-\pi \gamma_{\psi_{2m},\psi_{2m}^\HH}(\tau)= &
\int_{0}^{\infty}|\widehat{\psi}_{2m}(\omega)|^2\;\sin(\omega\tau)\;d\omega\nonumber\\
= & \int_{0}^{\infty}|A_0(\omega)|^2 |\widehat{\psi}_{m}(\omega)|^2\;\sin(2\omega\tau)\;d\omega
\label{eq:packetA0}
\end{align}
Furthermore, we have
\begin{align}
|A_0(\omega)|^2&=\sum_k \gamma_{a_0}[k] \exp(-\imath k \omega)\nonumber\\
&= \gamma_{a_0}[0] + 2 \sum_{k=1}^{\infty} \gamma_{a_0}[k] \cos(k \omega).
\nonumber
\end{align}
Combining this equation with \eqref{eq:packetA0} and using classical trigonometric
equalities, we obtain
\begin{equation}
\begin{split}
-\pi \gamma_{\psi_{2m},\psi_{2m}^\HH}(\tau) = &\, \gamma_{a_0}[0] \int_0^\infty|\widehat{\psi}_{m}(\omega)|^2 \sin(2 \omega \tau)d \omega\\
+ \sum_{k=1}^\infty & \gamma_{a_0}[k] \Big( \int_0^\infty  |\widehat{\psi}_{m}(\omega)|^2 \sin\big((2\tau-k) \omega\big) d \omega \\
&+ \int_0^\infty  |\widehat{\psi}_{m}(\omega)|^2 \sin\big((2\tau+k)\omega\big) d\omega \Big)
\end{split}
\label{eq:paquetimp}
\nonumber
\end{equation}
which, again invoking Relation \eqref{eq:gammapsi}, yields \eqref{eq:interpair}.
Eq. \eqref{eq:interimpair} can be proved similarly starting from \eqref{eq:packetb}.

\section{Cross-correlations for Haar wavelet}
\label{ap:haar}
Knowing the expression of the Fourier transform of the Haar scaling function
in \eqref{eq_ond_haar0} and using the cross-correlation formula \eqref{eq:gammaphi}, we obtain:
\begin{align}
& \forall \tau\in \RR,\quad
\gamma_{\psi_{0},\psi_{0}^\HH}(\tau) \nonumber \\
&=\frac{1}{\pi} \sum_{k=0}^\infty
(-1)^k \int_{2k\pi}^{2(k+1)\pi}
\text{sinc}^2(\frac{\omega}{2})\;\cos\big(\omega\;(\frac{1}{2}+\tau+d)\big)\;d\omega\nonumber\\
&=\frac{2}{\pi} \sum_{k=0}^\infty
(-1)^k \int_{k\pi}^{(k+1)\pi}\frac{\sin^2{(\nu)}}{\nu^2}\cos\big(\nu\;(1+2\tau+2d)\big)\;d\nu.
\label{eq:Haargammapsi0inter}
\end{align}
By integration by part, we find: for all $(\alpha,\beta,\eta)\in \RR^3$,
\begin{align}
& \int_\alpha^\beta\frac{\sin^2(\omega)}{\omega^2}\cos(\eta \omega)d\omega \nonumber \\ =&\frac{\sin^2(\alpha)\cos(\eta \alpha)}{\alpha}-\frac{\sin^2(\beta)\cos(\eta \beta)}{\beta} \nonumber \\ & +\frac{1}{4}(2+\eta)\int_\alpha^\beta \frac{\sin\big((2+\eta)\omega\big)}{\omega}d\omega-\frac{\eta}{2}\int_\alpha^\beta \frac{\sin(\eta \omega)}{\omega}d\omega \nonumber\\& +\frac{1}{4}(2-\eta)\int_\alpha^\beta \frac{\sin\big((2-\eta )\omega\big)}{\omega}d\omega\nonumber\\
=&\frac{\sin^2(\alpha)\cos(\eta \alpha)}{\alpha}-\frac{\sin^2(\beta)\cos(\eta \beta)}{\beta} \nonumber \\ & +\frac{1}{4}(\eta+2)\int_{\alpha(\eta+2)}^{\beta(\eta+2)} \frac{\sin(\nu)}{\nu}d\nu -\frac{\eta}{2}\int_{\alpha\eta}^{\beta\eta} \frac{\sin(\nu)}{\nu}d\nu \nonumber\\ & +\frac{1}{4}(\eta-2)\int_{\alpha(\eta-2)}^{\beta(\eta-2)} \frac{\sin(\nu)}{\nu}d\nu.
\nonumber
\end{align}
Combining this result with \eqref{eq:Haargammapsi0inter} leads to \eqref{eq_inter_haar0}.

On the other hand, according to \eqref{eq_ond_haar1} and \eqref{eq:gammapsi},
we have
\begin{multline}
\forall \tau \in \RR,\\
\gamma_{\psi_{1},\psi_{1}^\HH}(\tau)=-\frac{1}{\pi}
\int_{0}^{\infty}\textrm{sinc}^2(\frac{\omega}{4})\;\sin^2{(\frac{\omega}{4})}\;\sin(\omega\tau)\;d\omega\,.
\nonumber
\end{multline}
In \cite[p.459]{Gradshteyn_I_2000_book_tab_isp},
an expression of $\int_0^{\infty}\frac{\sin^2{(\alpha x)}\sin^2{(\beta x)}\sin{(2\eta x)}dx}{x^2}$
with $(\alpha,\beta,\eta)\in (\RR_+^*)^3$ is given. Using this relation yields
\eqref{eq_inter_haar1} when $\tau > 0$. The general expression for $\tau \in \RR$ follows from the oddness of $\gamma_{\psi_{1},\psi_{1}^\HH}$.

\section{Cross-correlation for the Franklin wavelet}
\label{ap:splines}

We have, for all $\tau \in \RR$,
\begin{align*}
\gamma_{\chi,\chi^\HH}(\tau)&=-\frac{1}{\pi}\int_{0}^{\infty}
|\widehat{\chi}(\omega)|^2 \sin(\omega \tau) d\omega \nonumber \\
&=-\frac{2}{\pi}\int_{0}^{\infty} \frac{\sin^8(\omega)}{\omega^4}
\sin(2\omega \tau) d\omega.
\end{align*}
After two successive integrations by part, we obtain
\begin{align}
\gamma_{\chi,\chi^\HH}(\tau)=&-\frac{4}{3\pi}\Big(4\int_{0}^{\infty}\frac{\sin^7(\omega)\cos(\omega)\sin(2\omega
\tau)}{\omega^3}d\omega \nonumber \\
+ & \tau\int_{0}^{\infty}\frac{\sin^8(\omega)\cos(2\omega
\tau)}{\omega^3}d\omega\Big)\nonumber \\
=&-\frac{2}{3\pi}\Big(28\int_{0}^{\infty}\frac{\sin^6(\omega)\cos^2(\omega)\sin(2\omega
\tau)}{\omega^2}d\omega \nonumber\\
- & 2(2+\tau^2)\int_{0}^{\infty}\frac{\sin^8(\omega)\sin(2\omega
\tau)}{\omega^2}d\omega\nonumber\\
&+16\tau
\int_{0}^{\infty}\frac{\sin^7(\omega)\cos(\omega)\cos(2\omega\tau)}{\omega^2}d\omega\Big).
\label{eq:gachi}
\end{align}
Standard trigonometric manipulations allow us to write:
\begin{align}
\sin^6(\omega)\cos^2(\omega)\sin(2\omega \tau) =
%&\frac{1}{4}\sin^4(\omega) \sin^2(2\omega) \sin(2\omega \tau) \nonumber \\=
&\frac{1}{8} \sin^4(\omega)\Big(\sin(2\tau\omega) \nonumber \\
- \frac{1}{2}
\sin\big(2&(\tau+2)\omega\big) - \frac{1}{2} \sin \big(2(\tau-2)\omega\big)\Big) \nonumber \\
\sin^8(\omega)\sin(2\omega \tau) =
%&\frac{1}{8}\sin^4(\omega) \big(\cos(4\omega)-4\cos(2\omega)+3\big)\sin(2\omega \tau) \nonumber \\=
&\frac{1}{16}\sin^4(\omega) \Big(\sin \big(2(\tau+2)\omega\big)\nonumber \\
+ \sin & \big(2(\tau-2)\omega\big) -
\sin\big(2(\tau+1)\omega\big)\nonumber \\
- &4 \sin \big(2(\tau-1)\omega\big)+ 6\sin(2 \tau\omega)\Big)\nonumber \\
\sin^7(\omega)\cos(\omega)\cos(2\omega\tau) =
%& \frac{1}{8}\sin^4(\omega)\big(2\sin(2\omega)-\sin(4\omega)\big)\cos(2\omega\tau)\nonumber \\=
& \frac{1}{16}\sin^4(\omega) \Big(\sin \big(2(\tau-2)\omega\big) \nonumber \\
 - \sin & \big(2(\tau+2)\omega\big) +
2\sin \big(2(\tau+1)\omega\big)\nonumber\\
- & 2 \sin \big(2(\tau-1)\omega\big)\Big). \nonumber
\end{align}
Inserting these expressions in (\ref{eq:gachi}) yields
\begin{multline}
3\pi \gamma_{\chi,\chi^\HH}(\tau)=
Q_0(\tau) J(\tau) - Q_1(\tau) J(\tau+1)-Q_1(-\tau) J(\tau-1)\\
+ Q_2(\tau) J(\tau+2)+ Q_2(-\tau) J(\tau-2),
\label{eq:gammatildpsi1temp}
\end{multline}
where  (see \cite[p. 459]{Gradshteyn_I_2000_book_tab_isp})
\begin{align}
\forall x \in \RR,\quad J(x)&=2\int_0^\infty \frac{\sin^4(\omega)}{\omega^2}\sin(2\omega x) d\omega \nonumber \\
&=-\frac{3}{2}x\ln|x|+(1+x)\ln|1+x| \nonumber \\
& -(1-x)\ln|1-x|-\frac{2+x}{4}\ln|2+x|\nonumber \\
& +\frac{2-x}{4}\ln|2-x|
\nonumber
\end{align}
% where $\forall c>0$ (see \cite[p. 459]{Gradshteyn2000})
% \begin{align}
% \mathcal{I}(c)&=\int_0^\infty \frac{\sin^4(\omega)}{\omega^2}\sin(c\omega) d\omega \nonumber \\
% &=-\frac{3}{8}c\ln(c)-\frac{4+c}{16}\ln(4+c)+\frac{4-c}{16}\ln|4-c|+\frac{2+c}{4}\ln(2+c)-\frac{2-c}{4}\ln|2-c|,
% \end{align}
and
\begin{equation}
Q_0(\tau) = \frac{3}{4}\tau^2-2,\;
Q_1(\tau) = \frac{\tau^2}{2}+2\tau+1,\;
Q_2(\tau) = \frac{1}{8}(\tau+4)^2.
\nonumber
\end{equation}
Simple algebra allows us to prove that \eqref{eq:gammatildpsi1temp} is equivalent to \eqref{eq:gammatildpsi1}.

On the other hand,
$|\widetilde{A}_1(\omega)|^2$ can be viewed as the frequency response of a
non causal stable digital filter whose transfer function is
\begin{align*}
P_{\widetilde{A}_1}(z) &= \displaystyle \frac{6(2-\frac{z+z^{-1}}{2})}
{\big(1+2\big(\frac{z+z^{-1}}{2}\big)^2)(2+\frac{z+z^{-1}}{2})} \nonumber
\\
&= \frac{2\sqrt{3}}{9}\Big(\frac{4(2+\sqrt{3})}{z+2+\sqrt{3}}-\frac{4(2-\sqrt{3})}{z+2-\sqrt{3}}\nonumber \\
+& \frac{7(2+\sqrt{3})-4(1+\sqrt{3})z}{z^2+2+\sqrt{3}}-\frac{7(2-\sqrt{3})-4(1-\sqrt{3})z}{z^2+2-\sqrt{3}}\Big).
\end{align*}
We next expand $P_{\widetilde{A}_1}(z)$ in Laurent series on
the holomorphy domain containing the unit circle, that is
\begin{equation}
\mathcal{D}_{P_{\widetilde{A}_1}} = \Big\{ z \in \mathbb{C}\;\mid\;
\frac{\sqrt{3}-1}{\sqrt{2}} < |z| < \frac{\sqrt{3}+1}{\sqrt{2}}\Big\}.
\nonumber
\end{equation}
We thus deduce from the partial fraction decomposition of $P_{\widetilde{A}_1}(z)$ that
\begin{multline}
P_{\widetilde{A}_1}(z) =
\frac{2\sqrt{3}}{9}\Big(4\sum_{k=-\infty}^\infty (-1)^k (2-\sqrt{3})^{|k|} z^{-k} \\
+ 7\sum_{k=-\infty}^\infty (-1)^k (2-\sqrt{3})^{|k|} z^{-2k}\\
+ 4(1-\sqrt{3}) \sum_{k=0}^\infty (-1)^k (2-\sqrt{3})^k
\big(z^{2k+1}+z^{-2k-1}\big)\Big).
\nonumber
\end{multline}
By identifiying the latter expression with
$\sum_{k=-\infty}^\infty \gamma_{\tilde{a}_1}[k] z^{-k}$, \eqref{eq:expgammaat1} is obtained.

\end{appendices}

%\bibliographystyle{IEEEtran}
%\bibliography{abbr,Reference_Base_JabRef,Reference_Base_JabRef_Perso_CC}

\onecolumn
\pagebreak
\thispagestyle{empty}
\listoffigures
\listoftables
\addtocounter{page}{-1}

\newpage
%%%%%%%%%%%%%%%%%%%%%%%%%%%%%%%%%%%%%%%%%%%%%%%%%%%%%%%%%%%%%%%%%%%%%%%%%%%%%%%%%%%%%%%%%%%%%%%%%%%%%%%%
%figures and tables
%%%%%%%%%%%%%%%%%%%%%%%%%%%%%%%%%%%%%%%%%%%%%%%%%%%%%%%%%%%%%%%%%%%%%%%%%%%%%%%%%%%%%%%%%%%%%%%%%%%%%%%%

\begin{figure}[htbp]
\centering
\input{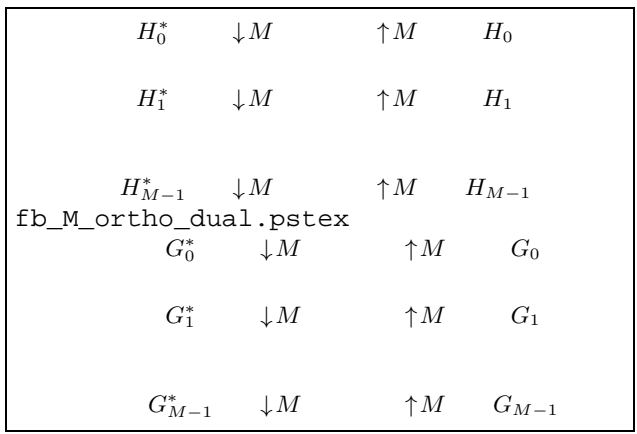}
\caption{A pair of primal (top) and dual (bottom) analysis/synthesis $M$-band para-unitary filter banks.}
\label{fig:Mband}
\end{figure}

\begin{figure}[ht]
\includegraphics[width=16cm]{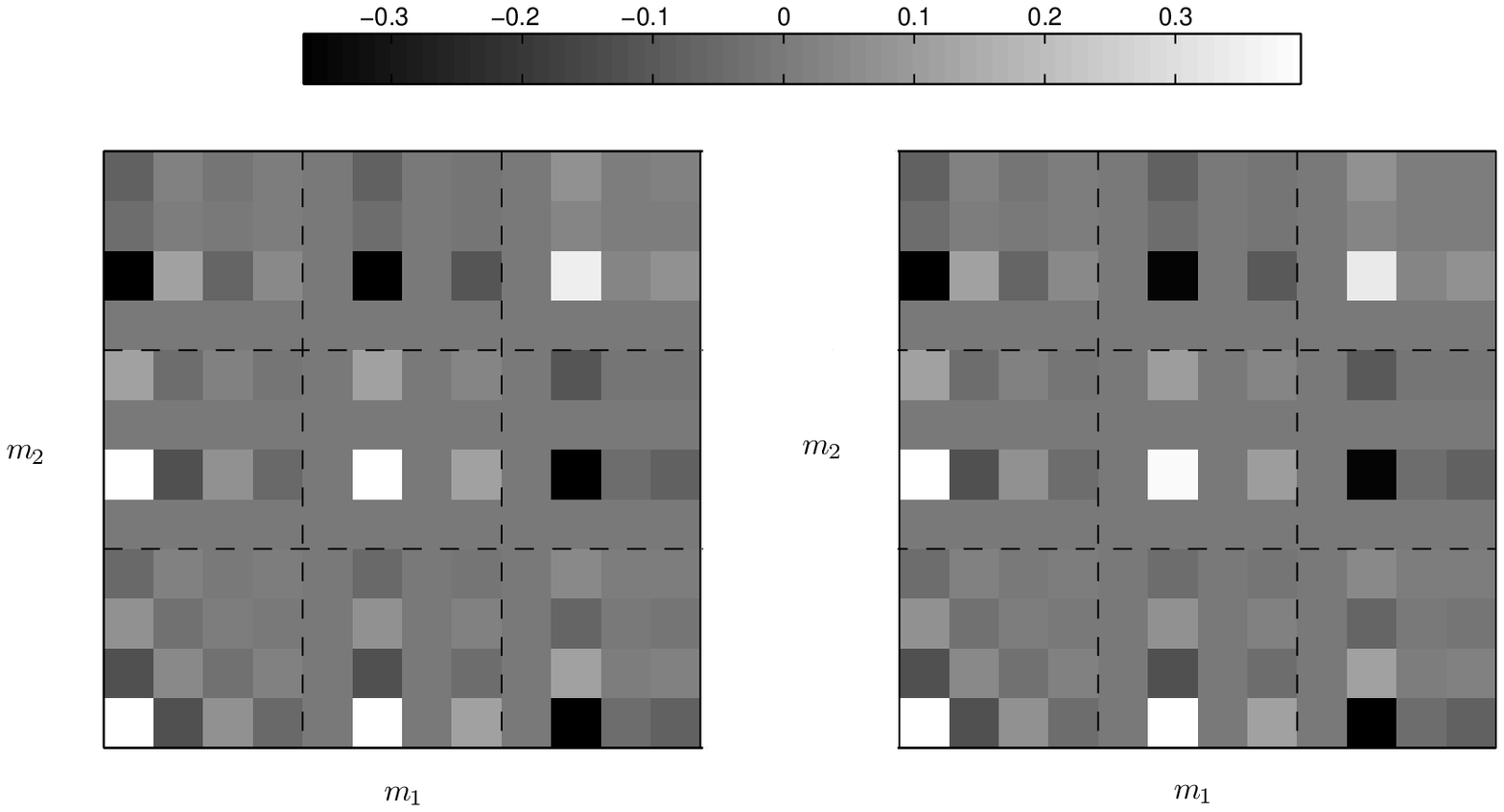}
\caption{$2D$ cross-correlations using $3$-band Meyer wavelets. Theoretical results (left); experimental results (right).\label{fig:inter2D}}
\end{figure}

\begin{table}[htbp]
\hspace{-0.6cm}
%\begin{center}
\begin{tabular}{|c||c|c|c|c|c|c|c|c|c|c|c|c|}
\hline
 $m$  & 1 & 2 & 3 & 4 & 5 & 6 & 7 & 8 & 9 & 10 & 11 & 12\\
\hline
{$\pi \gamma_{\psi_m,\psi_m^\HH}(\tau)$}
& $\frac{1}{2^3 \tau^3}$
& $\frac{1}{2^5 \tau^3}$
& $-\frac{3}{2^7 \tau^5}$
& $\frac{1}{2^7 \tau^3}$
& $-\frac{3}{2^9 \tau^5}$
& $-\frac{3}{2^{11} \tau^5}$
& $\frac{45}{2^{14} \tau^7}$
& $\frac{1}{2^9 \tau^3}$
& $-\frac{3}{2^{11} \tau^5}$
& $-\frac{3}{2^{13} \tau^5}$
& $\frac{45}{2^{16} \tau^7}$
& $-\frac{3}{2^{15} \tau^5}$
\\
\hline
\end{tabular}
\caption{Asymptotic form of $\gamma_{\psi_{m},\psi_{m}^\HH}(\tau)$ as $|\tau|\to \infty$ for Walsh-Hadamard wavelets.}
\label{tab:asympthaar}
%\end{center}
\end{table}

\begin{table*}[h]
%\begin{center}
\hspace*{-0.8cm}
{\footnotesize
\begin{tabular}{| c|| c| c| c| c|| c| c| c| }
\cline{2-8}
\multicolumn{1}{ c |}{ }
 & \multicolumn{4}{|c||}{$\gamma_{\psi_0,\psi_0^\HH}$}  & \multicolumn{3}{|c|}{$\gamma_{\psi_1,\psi_1^\HH}$} \\
\hline
Wavelets\; $\setminus$ \;$\ell$ & 0 & 1  & 2 & 3  & 1  & 2 & 3   \\
\hline
\hline
Shannon & 0.63662 &-0.21221 &0.12732 & $-9.0946\times10^{-2}$ & 0.63662 & 0 & 0.21221  \\
\hline
%Meyer $\epsilon=\frac{1}{20}$ & 0.6365 & -0.2119 & 0.1268 & -0.0902 & 0.6393 & -0.0086 & 0.2202  \\
%\hline
%Meyer  $\epsilon=1/10$ & 0.63622 & -0.21100 & 0.12532 & $-8.8166\times10^{-2}$  & 0.63260 & $-4.7224\times10^{-3}$ &  0.20054  \\
%\hline
%Meyer  $\epsilon=1/6$ & 0.63550  & -0.20887  & 0.12184 & $-8.3418\times10^{-2}$ & 0.62555 & $-1.2581\times10^{-2}$ &  0.18177\\
%\hline
Meyer  $\epsilon=1/3$ & 0.63216 & -0.19916 & 0.10668 & $-6.4166\times10^{-2}$ & 0.59378 & $-4.1412\times10^{-2}$ & 0.11930\\
\hline Splines order 3 & 0.62696  & -0.18538  &
8.8582$\times10^{-2}$  & -4.6179$\times10^{-2}$  & 0.55078 & -5.8322$\times10^{-2}$  & 8.2875$\times10^{-2}$ \\
\hline
Splines order 1 & 0.60142  & -0.12891  & $3.4815\times10^{-2}$  & $-9.2967\times10^{-3}$  & 0.38844  & $-5.7528\times10^{-2}$  & $1.8659\times10^{-2}$ \\
\hline
Haar & 0.51288 & $-1.1338\times10^{-2}$ & $-1.0855\times10^{-3}$ & $-2.6379\times10^{-4}$ & 0.10816 & $5.6994\times10^{-3}$ &  $1.5610\times10^{-3}$  \\
\hline
\end{tabular}
\caption{Theoretical cross-correlation values in the dyadic case
($d=0$).\label{tab:simulsdya}} }
%\end{center}
\end{table*}

\begin{table*}[h]
\begin{center}
{\scriptsize
\begin{tabular}{| c|| c| c| c| c|| c| c| c| }
\cline{2-8}
\multicolumn{1}{ c|}{}
 & \multicolumn{4}{|c||}{$\gamma_{\psi_0,\psi_0^\HH}$}  & \multicolumn{3}{|c|}{$\gamma_{\psi_1,\psi_1^\HH}$} \\
\hline
Wavelets\; $\setminus$ \;$\ell$ & 0 & 1  & 2 & 3  & 1  & 2 & 3   \\
\hline
\hline
Meyer $3$-band $\epsilon=1/4$ & 0.63411 & -0.20478 & 0.11530 & -7.4822$\times10^{-2}$ & 0.62662 & 0 & 0.18391 \\
\hline
%Meyer $3\sim4$-band $\epsilon=1/10$ & 0.63622 & -0.21100 & 0.12532 & -8.8166$\times10^{-2}$ & 0.63501 & 0 & 0.20742 \\
%\hline
Meyer $4$-band $\epsilon=1/5$ & 0.63501 & -0.20742 & 0.11950 & -8.0293$\times10^{-2}$  &  0.63020 & 0 & 0.19367  \\
\hline Meyer $5$-band $\epsilon=1/6$ & 0.63550 & -0.20887 & 0.12184 &  -8.3419$\times10^{-2}$ & 0.63216  & 0  & 0.19917  \\
\hline Meyer $6$-band $\epsilon=1/7$ & 0.63580 & -0.20975 & 0.12327 & -8.5357$\times10^{-2}$ & 0.63334 & 0 &  0.20255  \\
\hline Meyer $7$-band $\epsilon=1/8$ & 0.63599 & -0.21033 & 0.12421 & -8.6637$\times10^{-2}$ & 0.63411 &  0 &  0.20478 \\
\hline Meyer $8$-band $\epsilon=1/9$ &  0.63612 & -0.21072 & 0.12486 &  -8.7525$\times10^{-2}$ & 0.63463& 0 &  0.20632 \\
%\hline Meyer $12$-band $\epsilon=1/13$ & 0.63638 & -0.21149 & 0.12614 & -8.9291$\times10^{-2}$ & 0.63567 & 0 & 0.20936 \\
%\hline Meyer $16$-band $\epsilon=1/17$ & 0.63648 & -0.21170 & 0.12663 &  -8.9975$\times10^{-2}$ & 0.63606 & 0 & 0.21054 \\
\hline
%Meyer $4$-band $\epsilon=1/10$ & 0.63622 & -0.21100 & 0.12532 & -8.8166$\times10^{-2}$  &  0.63501 & 0 & 0.20742  \\
\end{tabular}
 \caption{Theoretical values for the first two cross-correlation sequences in the $M$-band Meyer case ($d=0$).\label{tab:simulsM1}} }
\end{center}
\end{table*}

\begin{table*}[h]
\begin{center}
{\footnotesize

\begin{tabular}{| c|| c| c| c| }
\cline{2-4}
\multicolumn{1}{ c|}{}
 & \multicolumn{3}{|c|}{$\gamma_{\psi_{M-1},\psi_{M-1}^\HH}$} \\
\hline
Wavelets\; $\setminus$ \;$\ell$  & 1  & 2 & 3 \\
\hline
\hline
%Shannon $4$-band   & 0.63662 & 0 & 0.21221 \\
%\hline
Meyer $3$-band $\epsilon=1/4$  & -0.58918 &  -6.0378$\times10^{-2}$ & -0.11965 \\
\hline
%Meyer $3$-band $\epsilon=1/10$  & -0.62864 &  -1.2302$\times10^{-2}$ & -0.19002 &  - & - & - \\
%\hline
Meyer $4$-band $\epsilon=1/5$ & 0.58555 & -7.0840$\times10^{-2}$ &  0.11961  \\
\hline
%Meyer $4$-band $\epsilon=1/10$ & -0.63501 & 0 & -0.20742  & 0.62316 & -2.2327$\times10^{-2}$ &  0.17705  \\
%\hline
Meyer $5$-band $\epsilon=1/6$ &  -0.58278 & -7.7359$\times10^{-2}$ & -0.11940 \\
\hline
Meyer $6$-band $\epsilon=1/7$ &  0.58063 & -8.1773$\times10^{-2}$ & 0.11914 \\
\hline
Meyer $7$-band $\epsilon=1/8$ &  -0.57893 & -8.4944$\times10^{-2}$ & -0.11888 \\
\hline
Meyer $8$-band $\epsilon=1/9$ &  0.57755 & -8.7324$\times10^{-2}$ & 0.11863 \\
\hline
%Meyer $12$-band $\epsilon=1/13$ & 0.57396 & -9.2855$\times10^{-2}$ & 0.11783 \\
%\hline
%Meyer $16$-band $\epsilon=1/17$ &  0.57195 & -9.5587$\times10^{-2}$ & 0.11730 \\
%\hline
\end{tabular}
\caption{Theoretical values for the last 
cross-correlation sequence in the
$M$-band Meyer case ($d=0$).\label{tab:simulsM2}}
 \vskip 0.3cm
\begin{tabular}{| c|| c| c| c| }
\hline
$\ell$ & 1  & 2 & 3  \\
\hline
\hline
$\gamma_{\psi_2,\psi_2^\HH}$ & $6.0560\times10^{-2}$ & $1.5848\times10^{-3}$ & $4.0782\times10^{-4}$ \\
\hline
$\gamma_{\psi_3,\psi_3^\HH}$ & $-4.9162\times10^{-2}$  &  $-3.0109\times10^{-4}$ & $-3.4205\times10^{-5}$  \\
\hline
$\gamma_{\psi_4,\psi_4^\HH}$ & 3.2069$\times 10^{-2}$ &  4.0952$\times 10^{-4}$  & 1.0319$\times10^{-4}$\\
\hline
$\gamma_{\psi_5,\psi_5^\HH}$ & -2.8899$\times 10^{-2}$ & -8.0753$\times 10^{-5}$ & -8.7950$\times10^{-6}$\\
\hline
$\gamma_{\psi_6,\psi_6^\HH}$ & -2.4899$\times 10^{-2}$ & -2.6077$\times 10^{-5}$ & -2.4511$\times 10^{-6}$ \\
\hline
$\gamma_{\psi_7,\psi_7^\HH}$ & 2.4297$\times 10^{-2}$ &  1.0608$\times 10^{-5}$ & 4.8118$\times10^{-7}$\\
\hline
%$\gamma_{\psi_8,\psi_8^\HH}$ & 1.6496$\times 10^{-2}$  & 1.0330$\times 10^{-4}$ & 2.5877$\times10^{-5}$ \\
%\hline
%$\gamma_{\psi_9,\psi_9^\HH}$ & -1.5677$\times 10^{-2}$ & -2.0585$\times 10^{-5}$  & -2.2149$\times10^{-6}$\\
%\hline
%$\gamma_{\psi_{10},\psi_{10}^\HH}$ & -1.4535$\times 10^{-2}$ & -6.6739$\times 10^{-6}$ & -6.1769$\times 10^{-7}$ \\
%\hline
%$\gamma_{\psi_{11},\psi_{11}^\HH}$ & 1.4373$\times 10^{-2}$ & 2.7471$\times 10^{-6}$ & 1.2193$\times10^{-7}$\\
%\hline
%$\gamma_{\psi_{12},\psi_{12}^\HH}$ & -1.2477$\times 10^{-2}$ & -1.8280$\times 10^{-6}$ & -1.5939$\times 10^{-7}$ \\
%\hline
%$\gamma_{\psi_{13},\psi_{13}^\HH}$ & 1.2425$\times 10^{-2}$ & 7.8561$\times 10^{-7}$  & 3.2144$\times10^{-8}$\\
%\hline
%$\gamma_{\psi_{14},\psi_{14}^\HH}$ & 1.2159$\times 10^{-2}$ & 3.0568$\times 10^{-7}$ & 9.7868$\times10^{-9}$\\
%\hline
%$\gamma_{\psi_{M-1},\psi_{M-1}^\HH}$ & -1.2138$\times 10^{-2}$ & -1.8744$\times 10^{-7}$ & -3.2532$\times10^{-9}$\\
%\hline
\end{tabular}
\caption {Theoretical cross-correlation values in the Walsh-Hadamard case.}\label{tab:simulsM3}}
\end{center}
\end{table*}

\begin{table*}[h]
%\begin{center}
\hspace*{-0.2cm}
{\scriptsize
\begin{tabular}{| c| c|| c| c| c| c|| c| c| c| c| }
\cline{3-10}
\multicolumn{2}{c|}{}
& \multicolumn{4}{|c||}{$\gamma_{\psi_0,\psi_0^\HH}$}  & \multicolumn{4}{|c|}{$\gamma_{\psi_1,\psi_1^\HH}$} \\
\hline
Wavelets & $j$\,$\setminus$\,$\ell$ & 0 & 1  & 2 & 3 & 0 & 1  & 2 & 3   \\
\hline
\hline 
& $ 1$ & 0.63538 & -0.21134 & 0.12586 & -9.1515$\times 10^{-2}$ & 9.97$\times 10^{-6}$ & 0.63680 & -1.7137$\times 10^{-4}$ & 0.21165 \\
\cline{2-10}
Shannon & $ 2$ & 0.63558 & -0.21347  &  0.12970 & -8.7908$\times 10^{-2}$ & 2.6426$\times 10^{-6}$& 0.63404 & 7.0561$\times 10^{-4}$ & 0.21210 \\
\cline{2-10}
 & $ 3$ & 0.63467 & -0.20732 & 0.13168 & -9.0116$\times 10^{-2}$ & -1.0078$\times 10^{-4}$ &  0.63846 & -1.2410$\times 10^{-3}$ &  0.20975\\
%\cline{2-10} & $ 4$ & 0.63821 & -0.20390 & 0.12547 &-8.8057$\times 10^{-2}$ & 1.5294$\times 10^{-4}$ &  0.63309 &3.8740$\times 10^{-3}$ & 0.20538\\
%\hline \hline & $ 1$ & 0.63492 & -0.21008 & 0.12380 & -8.8689$\times10^{-2}$ & 7.3183$\times 10^{-5}$ & 0.63433 & -1.7092$\times10^{-3}$ & 0.20456\\
%\cline{2-10}
%Meyer & $ 2$ & 0.63510 & -0.21220 & 0.12763 & -8.5061$\times10^{-2}$ & -4.5612$\times 10^{-5}$ & 0.62985 & -4.0012$\times 10^{-3}$ & 0.20031 \\
%\cline{2-10} $\epsilon=1/10$ & $ 3$ & 0.63437 & -0.20621 & 0.12975 & -8.7416$\times10^{-2}$ & -3.5753$\times 10^{-4}$ & 0.63444 & -6.2461$\times 10^{-3}$ & 0.19799\\
%\cline{2-10} & $ 4$ & 0.63735 & -0.20225 &  0.12303 & -8.4845$\times10^{-2}$ & 8.0967$\times 10^{-4}$ & 0.62882 & -4.7238$\times 10^{-4}$ & 0.19335 \\
%\hline \hline
%& $ 1$ & 0.63415 & -0.20790 & 0.12028 & -8.3897$\times10^{-2}$ & 1.2349$\times 10^{-4}$ & 0.63003 & -4.2080$\times 10^{-3}$ & 0.19303\\
%\cline{2-10}
%Meyer & $ 2$ & 0.63432 & -0.21001 & 0.12409 & -8.0273$\times10^{-2}$ & -7.2837$\times10^{-5}$ & 0.62263 & -1.1889$\times10^{-2}$ & 0.18143 \\
%\cline{2-10}
% $\epsilon=1/6$ & $ 3$ & 0.63355 & -0.20400 &  0.12618 & -8.2600$\times10^{-2}$ & -4.098$\times10^{-4}$ & 0.62718 & -1.4093$\times10^{-2}$ & 0.17914\\
%\cline{2-10}
%& $ 4$ & 0.63682 & -0.20033 & 0.11976 & -8.0314$\times10^{-2}$ & 4.3778$\times10^{-4}$ & 0.62178 & -8.6652$\times10^{-3}$ & 0.17475 \\
\hline 
\hline
& $ 1$ & 0.63091 & -0.19828 & 0.10517 & -6.4650$\times10^{-2}$ & 1.8257$\times10^{-5}$ & 0.61092 & -1.2433$\times10^{-2}$ &  0.15307 \\
\cline{2-10}
Meyer & $ 2$ & 0.63112  & -0.20043 & 0.10903 & -6.1060$\times10^{-2}$ & -7.5431$\times10^{-6}$ & 0.59115 & -4.0881$\times10^{-2}$ &  0.11888 \\
\cline{2-10}
$\epsilon=1/3$ & $ 3$ & 0.62971 & -0.19391 & 0.11084 & -6.3378$\times10^{-2}$ & 4.0868$\times10^{-4}$ & 0.59522 & -4.2624$\times10^{-2}$ & 0.11651 \\
%\cline{2-10} & $ 4$ & 0.63419 &  -0.19138  &  0.10535 & -6.1768$\times10^{-2}$ & -1.2745$\times10^{-3}$ & 0.59018 & -3.8016$\times10^{-2}$ & 0.11381\\
\hline \hline & $ 1$ & 0.62587 & -0.18459 & 8.7088$\times10^{-2}$
&
-4.6635$\times10^{-2}$ & -1.4511$\times10^{-4}$ & 0.58458 & -1.2651$\times10^{-2}$ & 0.12557 \\
\cline{2-10} Splines & $ 2$ & 0.62606 & -0.18679 & 9.1068$\times10^{-2}$ & -4.3124$\times10^{-2}$ & 1.9483$\times10^{-4}$ & 0,54841 & -5.8083$\times10^{-2}$ & 8.2386$\times10^{-2}$\\
\cline{2-10}
 order $3$ & $ 3$ & 0.62398 & -0.17984 & 9.2793$\times10^{-2}$ &  -4.5682$\times10^{-2}$& 1.2400$\times10^{-3}$ & 0.55204 & -5.9368$\times10^{-2}$ &  8.0105$\times10^{-2}$\\
%\cline{2-10} & $ 4$ & 0.62945 & -0.17798 & 8.7595$\times10^{-2}$ & -4.4227$\times10^{-2}$ & -2.8437$\times10^{-3}$ &  0.54744 & -5.4354$\times10^{-2}$ & 7.8836$\times10^{-2}$\\
\hline \hline & $ 1$ & 0.60016 & -0.12749  &  3.2975$\times10^{-2}$ & -9.7419$\times10^{-3}$ & -4.5287$\times10^{-4}$ & 0.47691 & 1.6224$\times10^{-2}$ & 6.9681$\times10^{-2}$ \\
\cline{2-10} Splines& $ 2$ &  0.60059 & -0.13045  & 3.7613$\times10^{-2}$ &  -6.5441$\times10^{-3}$ & 6.6358$\times10^{-4}$ & 0.38507 & -5.7502$\times10^{-2}$ & 1.8042$\times10^{-2}$ \\
\cline{2-10}
order $1$ & $ 3$ & 0.59771 & -0.12303  &  3.9388$\times10^{-2}$ &  -9.3208$\times10^{-3}$ & 2.2725 $\times10^{-3}$ &  0.38958 & -5.8143$\times10^{-2}$ & 1.6160$\times10^{-2}$ \\
%\cline{2-10} & $ 4$ & 0.60524 & -0.12249  & 3.4723$\times10^{-2}$ & -8.8249$\times10^{-2}$ & -5.4485$\times10^{-3}$ &  0.38649 & -5.3529$\times10^{-2}$ & 1.6641$\times10^{-2}$ \\
\hline \hline &
$ 1$ & 0.50297 & -3.3557$\times10^{-3}$ & -1.1706$\times10^{-3}$ & 2.7788$\times10^{-4}$ & 3.8368$\times10^{-4}$ &  0.22455 & 7.2451$\times10^{-2}$ & 4.6418$\times10^{-2}$\\
\cline{2-10}
Haar& $ 2$ & 0.50966 & -1.0083$\times10^{-2}$ & 7,2357$\times10^{-6}$ & 1.5087$\times10^{-3}$ & -1.2135$\times10^{-3}$ & 9.9745$\times10^{-2}$ & 5.1371$\times10^{-3}$ & 1.0847$\times10^{-3}$ \\
\cline{2-10}
& $ 3$ & 0.51023 & -8.3267$\times10^{-3}$ & 2.7936$\times10^{-3}$ & 7.0343$\times10^{-5}$ & 1.2329$\times10^{-3}$ &  0.10703 & 6.7651$\times10^{-3}$ & 2.2422$\times10^{-3}$ \\
%\cline{2-10} & $ 4$ & 0.51266 & -6.5010$\times10^{-3}$ & 1.4258$\times10^{-4}$ & -1.7978$\times10^{-3}$ & 8.2842$\times10^{-4}$ & 0.10734 & 7.0549$\times10^{-3}$ & 3.4539$\times10^{-4}$\\
\hline \hline  & $ 1$ & 0.59822 & -0.12059 &  2.3566$\times10^{-2}$  & -3.3325$\times10^{-3}$ & -5.0189$\times10^{-4}$  & 0.46392 & 2.1155$\times10^{-2}$ & 6.1137$\times10^{-2}$ \\
\cline{2-10} Symlets 8 & $ 2$ & 0.59899 & -0.12432 & 2.8865$\times10^{-2}$ & -2.8960$\times10^{-4}$ & 6.7795$\times10^{-4}$  & 0.36368  & -5.7692$\times10^{-2}$  & 9.7533$\times10^{-3}$ \\
\cline{2-10}
& $ 3$ & 0.59654 & -0.11703 & 3.0357$\times10^{-2}$ & -2.8071$\times10^{-3}$ & 1.8568$\times10^{-3}$ &  0.37012 & -5.8376$\times10^{-2}$ & 6.9416$\times10^{-3}$\\
 %\cline{2-10}
 %& $ 4$ & 0.60533 & -0.11777 &  2.6040$\times10^{-2}$ & -2.2219$\times10^{-3}$ & -6.1397$\times10^{-3}$ & 0.36796 & -5.4626$\times10^{-2}$ & 7.8162$\times10^{-3}$\\
  \hline
\end{tabular}
\caption{Cross-correlation estimates
in the dyadic case ($d=0$).\label{tab:numdya}} }
%\end{center}
\end{table*}
\begin{table*}[h]
%\begin{center}
\hspace*{-0.7cm}
{\scriptsize
\begin{tabular}{| c| c|| c| c| c| c|| c| c| c| c| }
\cline{3-10}
\multicolumn{2}{c|}{}
& \multicolumn{4}{|c||}{$\gamma_{\psi_0,\psi_0^\HH}$}  & \multicolumn{4}{|c|}{$\gamma_{\psi_1,\psi_1^\HH}$} \\
\hline
Wavelets& $j$\,$\setminus$\,$\ell$ & 0 & 1  & 2 & 3 & 0 & 1  & 2 & 3   \\
\hline
\hline Meyer & 1 & 0.63337 & -0.20549 & 0.11431 & -7.1877$\times10^{-2}$ & -6.8977$\times10^{-4}$ & 0.62533 & -1.3630$\times10^{-4}$ & 0.18236\\
\cline{2-10}  $3$-band & 2 & 0.63284 & -0.19932 & 0.11938 & -7.5331$\times10^{-2}$ & -1.7781$\times10^{-4}$ & 0.63013 & 1.2830$\times10^{-3}$ & 0.18409 \\
\cline{2-10} $\epsilon=1/4$ & 3 & 0.63886 & -0.19987 & 0.11763 & -6.6380$\times10^{-2}$ & -3.9622$\times10^{-4}$ & 0.61503 & 8.4042$\times10^{-4}$ & 0.17519\\
%\cline{2-10}  & 4 & 0.64835 & -0.19520 & 0.12189 & -6.5060$\times10^{-2}$ & -3.7264$\times10^{-4}$ & 0.62701 & 3.4949$\times10^{-3}$ & 0.17445\\
%\cline{2-10} & 4 & 0.62927 & -8.7242 & 1.9135$\times10^{-2}$ & 1.1516$\times10^{-3}$ & 1.2756$\times10^{-2}$ & 0.37426 & -3.7663$\times10^{-2}$ & 6.2396$\times10^{-3}$\\
 \hline \hline
Meyer  & 1 & 0.63383 & -0.20856 & 0.12176 & -7.7150$\times10^{-2}$
& 2.1961$\times10^{-5}$ & 0.62739 & 7.6636$\times10^{-4}$ &
0.19339\\ 
\cline{2-10}
  $4$-band & 2 & 0.63648 & -0.19903 & 0.11757 & -7.7337$\times10^{-2}$ & 4.8821$\times10^{-4}$ & 0.62676 & 3.8876$\times10^{-3}$ & 0.18683 \\
\cline{2-10} $\epsilon=1/5$ & 3 & 0.64642 & -0.19651 & 0.12202 & -6.9984$\times10^{-2}$ & 2.3054$\times10^{-3}$ & 0.63384 & -1.6254$\times10^{-3}$ & 0.19233\\
%\cline{2-10}& 4 & 0.66629 & -0.18141 & 0.12252 & -8.6081$\times10^{-2}$ & 7.1824$\times10^{-4}$ & 0.61528 & 2.4230$\times10^{-2}$ & 0.19744 \\
\hline \hline
 Meyer & 1 & 0.63338 & -0.20818 & 0.12534 & -8.0594$\times10^{-2}$ & 8.6373$\times10^{-4}$ & 0.62902 & 8.3871$\times10^{-4}$ & 0.1981\\ \cline{2-10}
  $5$-band &  2 & 0.64020 & -0.20288 & 0.12135 & -7.3844$\times10^{-2}$& 5.3607$\times10^{-4}$ & 0.62230 & 4.6651$\times10^{-4}$ & 0.19093 \\
\cline{2-10}  $\epsilon=1/6$ & 3 & 0.6566 & -0.19609 & 0.12891 & -7.6061$\times10^{-2}$ & -2.8654$\times10^{-3}$ & 0.62281 & -4.7324$\times10^{-3}$ & 0.19364 \\
%\cline{2-10} & 4 & 0.69661 & -0.21187 & 0.12674 & -4.7990$\times10^{-2}$ & -8.1119$\times10^{-4}$ & 0.60454 & -2.0467$\times10^{-2}$ & 0.15662 \\
\hline \hline Meyer & 1 & 0.63403 & -0.20818 & 0.12711 &
-8.2124$\times10^{-2}$ & 4.5293$\times10^{-4}$ & 0.63229 &
-1.9919$\times10^{-3}$ & 0.20228 \\ \cline{2-10}
   $6$-band & 2 &  0.64471 & -0.20716 & 0.13141 & -8.4914$\times10^{-2}$& 3.7150$\times10^{-4}$ & 0.62450 & 6.5942$\times10^{-4}$ & 0.20313\\
\cline{2-10} $\epsilon=1/7$ & 3 & 0.66409 & -0.19532 & 0.14401 & -9.3486$\times10^{-2}$ & 2.0490$\times10^{-3}$ & 0.63619 & 1.5614$\times10^{-2}$ & 0.17595 \\
%\cline{2-10}  & 4 & 0.70801 & -0.17813 & 0.13165 & -3.2517$\times10^{-2}$ & -1.3687$\times10^{-2}$ &  0.63063 & 6.4898$\times10^{-3}$ & 0.12762 \\
\hline  \hline Meyer & 1 & 0.63323 & -0.20781 & 0.12663 & -8.3335$\times10^{-2}$ & 1.5731$\times10^{-3}$ & 0.63528 & -8.6821$\times10^{-4}$ & 0.20509\\
 \cline{2-10}
$7$-band & 2 &  0.64286 & -0.20057 & 0.12881 & -8.1995$\times10^{-2}$ & -1.6505$\times10^{-4}$ & 0.62782 & -7.9119$\times10^{-3}$ & 0.20007 \\
\cline{2-10} $\epsilon=1/8$   & 3 & 0.68445 & -0.1845 & 0.12065 & -9.0295$\times10^{-2}$ & -5.9955$\times10^{-3}$ & 0.62572 & -5.3033$\times10^{-2}$ & 0.17409 \\
%\cline{2-10} & 4 & 0.74529 & -7.2234$\times10^{-2}$ & 2.4492$\times10^{-2}$ & -1.5505$\times10^{-2}$ & -4.0564$\times10^{-2}$ & 0.52884 & 8.3525$\times10^{-3}$ & 1.1663$\times10^{-2}$ \\
\hline  \hline Meyer & 1 & 0.63426 & -0.20592 & 0.12928 & -8.6766$\times10^{-2}$ & -2.1756$\times10^{-4}$ & 0,63658 & -1.3977$\times10^{-3}$ & 0.20385\\
 \cline{2-10}
$8$-band  & 2 & 0.64743 & -0.19970 & 0.12725 & -7.7096$\times10^{-2}$ & 1.4856$\times10^{-3}$ &  0.63725 & -2.4313$\times10^{-3}$ &  0.20396 \\
\cline{2-10} $\epsilon=1/9$ & 3 & 0.69342 & -0.20505 & 0.11257 & -6.0075$\times10^{-2}$ & -3.6363$\times10^{-3}$ & 0.61590 & 1.3830$\times10^{-2}$ & 0.22112 \\
%\cline{2-10}  & 4 & 0.72053 & -8.9478$\times10^{-2}$ & -6.8857$\times10^{-2}$ & 0.15407 & 2.7255$\times10^{-2}$ & 0.39475 & -3.7583$\times10^{-3}$ & -0.13973 \\
 \hline \hline
  & 1 & 0.59148 & -0.11001 & 1.9635$\times10^{-2}$ & 2.4318$\times10^{-3}$
  &-6.6559$\times10^{-6}$  &  0.36856 & -6.0858$\times10^{-2}$ &  8.4608$\times10^{-5}$ \\ 
\cline{2-10}
 AC & 2 & 0.59855 & -0.10412  & 1.6012$\times10^{-2}$ & 1.8921$\times10^{-4}$& -7.1462$\times10^{-3}$ & 0.37379 & -5.8026$\times10^{-2}$ &  -4.4309$\times10^{-3}$ \\
\cline{2-10} $4$-band & 3 & 0.60057 & -9.5335$\times10^{-2}$ & 2.0094$\times10^{-2}$  & 7.6430$\times10^{-3}$ & 2.5313$\times10^{-3}$ & 0.37514 & -5.6207$\times10^{-2}$ & 6.8164$\times10^{-3}$\\
\hline  \hline
 & & \multicolumn{4}{|c||}{$\gamma_{\psi_2,\psi_2^\HH}$}  & \multicolumn{4}{|c|}{$\gamma_{\psi_3,\psi_3^\HH}$} \\
 \hline
\hline
  & 1 & -1.9012$\times10^{-4}$ & -0.34054 & 5.5692$\times10^{-2}$ & 4.6899$\times10^{-5}$& -5.5011$\times10^{-5}$ & 0.36755 & 4.1274$\times10^{-2}$ & 5.6594$\times10^{-2}$ \\ \cline{2-10}
 AC & 2 &  1.0139$\times10^{-3}$ & -0.32275 & 5.4137$\times10^{-2}$ & -6.7903$\times10^{-3}$& 3.6460$\times10^{-3}$ & 0.18371 & -4.1645$\times10^{-2}$ & 6.8637$\times10^{-3}$ \\
\cline{2-10} $4$-band & 3 & 6.8587$\times10^{-3}$ & -0.32199 & 4.5083$\times10^{-2}$ & -9.7023$\times10^{-3}$ & 8.3037$\times10^{-3}$  & 0.19070 &  -3.7675$\times10^{-2}$ & -4.5919$\times10^{-4}$ \\
%\cline{2-10} & 4 & -1.9111$\times10^{-2}$  &  -0.32216 & 5.8864$\times10^{-2}$ & -1.5327$\times10^{-2}$ &-9.3368$\times10^{-3}$ & 0.17971 & -2.4932$\times10^{-2}$ & 1.3781$\times10^{-3}$ \\
\hline
\hline
   & 1 & 2.4712$\times 10^{-4}$ & 0.20479& 6.9476$\times 10^{-2}$ & 4.4200$\times 10^{-2}$ & -1.8669$\times 10^{-4}$&	-6.1810$\times 10^{-2}$ & -1.2677$\times 10^{-3}$&	2.4199$\times 10^{-5}$\\
\cline{2-10}
 Hadamard &
   2 & 3.5680$\times 10^{-3}$&	5.9530$\times 10^{-2}$& -5.3171$\times 10^{-3}$&4.3827$\times 10^{-3}$ & 6.2437$\times 10^{-4}$&-5.0635$\times 10^{-2}$&4.6773$\times 10^{-3}$&	-8.7358$\times 10^{-3}$ \\
\cline{2-10}
   & 3 & 1.1391$\times 10^{-2}$&5.9541$\times 10^{-2}$&	8.3376$\times 10^{-4}$&	-1.4604$\times 10^{-3}$ & 1.9009$\times 10^{-3}$&-5.5798$\times 10^{-2}$&	-5.7086$\times 10^{-3}$&-1.1253$\times 10^{-2}$ \\ 
\hline
\end{tabular}
\caption{Cross-correlation estimates
in the $M$-band case ($d=0$).\label{tab:numM1}}}
%\end{center}
\end{table*}
\begin{table*}[h]
\begin{center}
{\scriptsize
\begin{tabular}{| c| c|| c| c| c| c| }
\cline{3-6}
\multicolumn{2}{c|}{}
& \multicolumn{4}{|c|}{$\gamma_{\psi_{M-1},\psi_{M-1}^\HH}$} \\
\hline
Wavelet & $j$\,$\setminus$\,$\ell$ & 0 & 1  & 2 & 3 \\
\hline
\hline & 1 & 5.2467$\times10^{-6}$ & 0.63606 & 2.0952$\times10^{-3}$ & 0.21261 \\
\cline{2-6} Shannon  & 2 & 5.9145$\times10^{-6}$ & 0.63592 & -4.1893$\times10^{-3}$ & 0.21083 \\
\cline{2-6} $4$-band & 3 & -1.2667$\times10^{-4}$ & 0.62746 & -5.7616$\times10^{-3}$ & 0.2020 \\
%\cline{2-6} & 4 & -1.9758$\times10^{-3}$ & 0.62216 & 1.0411$\times10^{-2}$ & 0.19837 \\
\hline  \hline Meyer & 1 & 4.1334$\times10^{-4}$ & -0.60986 & -2.5395$\times10^{-2}$ & -0.16095 \\
\cline{2-6} $3$-band & 2 & 3.9059$\times10^{-4}$ & -0.58694 & -6.1089$\times10^{-2}$ & -0.11754 \\
\cline{2-6} $\epsilon=1/4$ & 3 & 3.5372$\times10^{-3}$ & -0.5879 & -5.1057$\times10^{-2}$ & -0.11499 \\
%\cline{2-6}  & 4 & -2.6603$\times10^{-3}$ & -0.57888 & -6.0512$\times10^{-2}$ & -0.11594 \\
 \hline \hline Meyer & 1 & 3.9730$\times10^{-4}$ & 0.60845 & -2.9936$\times10^{-2}$ & 0.16111 \\
\cline{2-6} $4$-band & 2 & -1.3788$\times10^{-3}$ & 0.58530 & -7.5797$\times10^{-2}$ &  0.11985 \\
\cline{2-6}  $\epsilon=1/5$ & 3 & 1.0644$\times10^{-3}$ &  0.57418 & -7.6790$\times10^{-2}$ & 0.10690 \\
%\cline{2-6}  & 4 & -6.6879$\times10^{-3}$ & 0.57189 & -6.4296$\times10^{-2}$ & 0.11240 \\
\hline  \hline Meyer & 1 & -7.2077$\times10^{-6}$  & -0.60862  &  -3.4588$\times10^{-2}$ &  -0.16162\\
\cline{2-6} $5$-band  & 2 & -3.2301$\times10^{-3}$ & -0.58482 & -8.6826$\times10^{-2}$ & -0.11844 \\
\cline{2-6} $\epsilon=1/6$ & 3 & -8.8877$\times10^{-3}$ & -0.56937 & -9.3811$\times10^{-2}$ & -0.11512 \\
%\cline{2-6}  & 4 & 4.0974$\times10^{-3}$ & -0.58715 & -7.9756$\times10^{-2}$ & -0.11369 \\
\hline \hline Meyer & 1 & 8.2632$\times10^{-4}$ & 0.60806 & -3.7209$\times10^{-2}$ & 0.16215\\
\cline{2-6} $6$-band  & 2 & -1.2448$\times10^{-3}$ & 0.58023 & -8.3257$\times10^{-2}$ &  0.11022 \\
\cline{2-6} $\epsilon=1/7$ & 3 & 5.5425$\times10^{-3}$ & 0.58196 & -8.4671$\times10^{-2}$ &  0.12368 \\
%\cline{2-6}  & 4 & -2.1938$\times10^{-2}$ & 0.59596 & -7.0229$\times10^{-2}$ & 4.9730$\times10^{-2}$ \\
\hline \hline Meyer & 1 & 2.7863$\times10^{-4}$ & -0.60863 & -3.9804$\times10^{-2}$ & -0.16443 \\
\cline{2-6} $7$-band & 2 & -5.9703$\times10^{-3}$ & -0.57749 & -9.9056$\times10^{-2}$ & -0.11228 \\
\cline{2-6}  $\epsilon=1/8$& 3 & 1.8490$\times10^{-3}$ & -0.58901 & -6.4289$\times10^{-2}$ & -0.13516 \\
%\cline{2-6}  & 4 & -5.2757$\times10^{-3}$ & -0.48108 & -3.2683$\times10^{-2}$ & -0,025974 \\
\hline \hline Meyer & 1 & -2.5084$\times10^{-4}$ & 0.60811  & -4.1611$\times10^{-2}$ & 0.16612 \\
\cline{2-6}  $8$-band & 2 & 1.0345$\times10^{-3}$ &  0.57216  & -9.4172$\times10^{-2}$ &  0.12014 \\
\cline{2-6} $\epsilon=1/9$ & 3 & -1.0777$\times10^{-2}$ &  0.56259  & -0.12183 & 0.10776 \\
%\cline{2-6}  & 4 & 1.3256$\times10^{-2}$  &  0.30836 & -5.5157$\times10^{-2}$ & -0.1311 \\
\hline
\end{tabular}
\caption{Estimation of the last 
cross-correlation sequence for
$M$-band Shannon and Meyer wavelets.\label{tab:numM2}} }
\end{center}
\end{table*}

\begin{table*}[h]
%\begin{center}
\hspace*{-0.8cm}
{\footnotesize
\begin{tabular}{| c| c| c| c| c| c| c| c| c| }
\hline
\multicolumn{1}{| r}{Wavelets}  &\multicolumn{1}{r |}{$\setminus$ \;$\ell$} & -3 & -2 & -1 & 0 & 1  & 2 & 3   \\
\hline
\hline
%Mey 2
%Test majorant OK
%ans =
Meyer 2-band & $\gamma_{\psi_0,\psi_1^\HH}(\ell)$ &   9.1502$\times10^{-2}$ &  -0.10848 &   0.11800 &  -0.11800  &  0.10848 &  -9.1502$\times10^{-2}$  &  7.0491$\times10^{-2}$\\
\cline{2-9}
$\epsilon = 1/3$ & $\gamma_{\psi_1,\psi_0^\HH}(\ell)$ &  -8.1258$\times10^{-2}$  &  0.10073 &  -0.11434  &  0.11924 &  -0.11434 &   0.10073 &  -8.1258$\times10^{-2}$\\
\hline
\hline
%Splines cub
%Test majorant OK
%ans =
Splines  & $\gamma_{\psi_0,\psi_1^\HH}(\ell)$ &  -8.2660$\times10^{-2}$  &  0.13666 &  -0.18237  &  0.18237 &  -0.13666  &  8.2660$\times10^{-2}$  & -4.5433$\times10^{-2}$\\
\cline{2-9}
order $3$  & $\gamma_{\psi_1,\psi_0^\HH}(\ell)$ &  6.1604$\times10^{-2}$ &  -0.10838  &  0.16319 &  -0.18941 &   0.16319 &  -0.10838  &  6.1604$\times10^{-2}$\\
\hline
\hline
%Splines Frank
%Test majorant OK
%ans =
%Splines $N=1$ &  -3.3440$\times10^{-2}$ &   0.13632 &  -0.30451 &   0.30451 &  -0.13632 &  3.3440$\times10^{-2}$ &  -9.7915$\times10^{-3}$\\
%   & 1.7422$\times10^{-2}$ &  -6.9831$\times10^{-2}$  &  0.22495 &  -0.33682 &   0.22495 &  -6.9831$\times10^{-2}$  &  1.7422$\times10^{-2}$\\
%\hline
%Haar
%Test majorant OK
%ans =
Haar    & $\gamma_{\psi_0,\psi_1^\HH}(\ell)$ & -9.2323$\times10^{-3}$ &  -2.2034$\times10^{-2}$  & -0.16656  &  0.44127 &  -0.16656 &  -2.2034$\times10^{-2}$ &  -9.2323$\times10^{-3}$\\
\cline{2-9}
  & $\gamma_{\psi_1,\psi_0^\HH}(\ell)$ & -3.1567$\times10^{-3}$  & -1.9621$\times10^{-2}$  &  0.35401 &  -0.35401  &  1.9621$\times10^{-2}$ &  3.1567$\times10^{-3}$  &  1.0758$\times10^{-3}$\\
\hline
\hline
%Mey 3
%1
%2 
Meyer  & $\gamma_{\psi_0,\psi_1^\HH}(\ell)$ & -8.4807$\times10^{-2}$ & 8.8904$\times10^{-2}$ & -8.8904$\times10^{-2}$ & 8.4807$\times10^{-2}$ & -7.7120$\times10^{-2}$ & 6.6763$\times10^{-2}$ & -5.4904$\times10^{-2}$\\
\cline{2-9}	
$3$-band & $\gamma_{\psi_1,\psi_0^\HH}(\ell)$ & 6.0944$\times10^{-2}$ & -7.2206$\times10^{-2}$ & 8.1363$\times10^{-2}$ & -8.7347$\times10^{-2}$ & 8.9428$\times10^{-2}$ & -8.7347$\times10^{-2}$ & 8.1363$\times10^{-2}$\\
\cline{2-9}
%2
%3
$\epsilon=1/4$ &$\gamma_{\psi_1,\psi_2^\HH}(\ell)$ & -6.3891$\times10^{-2}$& -7.4738$\times10^{-2}$ & -8.3192$\times10^{-2}$ & -8.8252$\times10^{-2}$ & -8.9297$\times10^{-2}$ & -8.6196$\times10^{-2}$ & -7.9333$\times10^{-2}$ \\
\hline
\hline
%Mey 4
%1
%2
%ans =
Meyer & $\gamma_{\psi_0,\psi_1^\HH}(\ell)$ & 6.5090$\times10^{-2}$ & -6.9156$\times10^{-2}$ & 7.1274$\times10^{-2}$ & -7.1274$\times10^{-2}$ & 6.9156$\times10^{-2}$	& -6.5090$\times10^{-2}$ &5.9394$\times10^{-2}$\\
\cline{2-9}	
$4$-band & $\gamma_{\psi_1,\psi_0^\HH}(\ell)$ & -6.2421$\times10^{-2}$ & 6.7350$\times10^{-2}$ & -7.0473$\times10^{-2}$ & 7.1543$\times10^{-2}$ & -7.0473$\times10^{-2}$ & 6.7350$\times10^{-2}$ & -6.2421$\times10^{-2}$	\\
\cline{2-9}
%2
%3
%ans =
$\epsilon=1/5$ & $\gamma_{\psi_1,\psi_2^\HH}(\ell)$ & 6.0949$\times10^{-2}$&6.6274$\times10^{-2}$&	6.9878$\times10^{-2}$&	7.1475$\times10^{-2}$&	7.0939$\times10^{-2}$&	6.8312$\times10^{-2}$&	6.3804$\times10^{-2}$\\
\cline{2-9}
%3
%4
%ans =
& $\gamma_{\psi_2,\psi_3^\HH}(\ell)$ & -6.5090$\times10^{-2}$&6.9156$\times10^{-2}$&	-7.1274$\times10^{-2}$&	7.1274$\times10^{-2}$&	-6.9156$\times10^{-2}$&	6.5090$\times10^{-2}$&	-5.9394$\times10^{-2}$\\	
\hline
%1 2
%Hadamard & -9.2323$\times10^{-3}$ & -2.2034$\times10^{-2}$ & -0.16656 & 0.44127 & -0.16656 & -2.2034$\times10^{-2}$ & -9.2323$\times10^{-3}$\\	
%$M=4$ & -3.1567$\times10^{-3}$ & -1.9621$\times10^{-2}$ & 0.35401 & -0.35401 & 1.9621$\times10^{-2}$ & 3.1567$\times10^{-3}$ & 1.0758$\times10^{-3}$\\
%1 3
%\cline{2-8}
%& -6.6339$\times10^{-3}$ & -1.6017$\times10^{-2}$ & -0.15546 & 0.38850 & -0.15546 & -1.6017$\times10^{-2}$ & -6.6339$\times10^{-3}$\\
%& -4.1816$\times10^{-3}$ & -2.6186$\times10^{-2}$ & 0.31505 & -0.31505 & 2.6186$\times10^{-2}$ & 4.1816$\times10^{-3}$ & 1.4226$\times10^{-3}$\\
%1 4
%\cline{2-8}
%& 1.1607$\times10^{-3}$ & 4.2425$\times10^{-3}$ & 0.11124 & 1.1589$\times10^{-18}$ & -0.11124 & -4.2425$\times10^{-3}$ & -1.1607$\times10^{-3}$\\
%& -1.4907$\times10^{-2}$ & -5.3095$\times10^{-2}$ & 9.6254$\times10^{-2}$ & 9.6254$\times10^{-2}$ & -5.3095$\times10^{-2}$ & -1.4907$\times10^{-2}$ & -7.1950$\times10^{-3}$\\
%2 3
%\cline{2-8}
%& -1.1607$\times10^{-3}$ & -4.2425$\times10^{-3}$ & -0.11124 & 2.9669$\times10^{-18}$ & 0.11124 & 4.2425$\times10^{-3}$ & 1.1607$\times10^{-3}$\\
%2 4
%\cline{2-8}
%& 4.6490$\times10^{-4}$ & 1.6069$\times10^{-3}$ & 9.3135$\times10^{-2}$ & 0.85959 & 9.3135$\times10^{-2}$ & 1.6069$\times10^{-3}$ & 4.6490$\times10^{-4}$\\
%3 4
%\cline{2-8}
%& 3.3095$\times10^{-4}$	 & 1.2354$\times10^{-3}$ & 0.10659& -0.21631 & 0.10659 & 1.2354$\times10^{-3}$ & 3.3095$\times10^{-4}$\\
%\hline
\end{tabular}
\caption{Inter-band cross-correlation values for some wavelet families. We recall that Property \eqref{eq:symmmplmc} holds and
that, for $M$-band Meyer wavelets $\gamma_{\psi_m,\psi_{m'}^\HH}$ is zero when $|m-m'|> 1$.
%For dyadic and Meyer $M$-band wavelets ($M=\{3,4\}$), first line $\gamma_{\psi_0,\psi_1^\HH}$  and second line $\gamma_{\psi_1,\psi_0^\HH}$. In addition, for $M$-band case, $\gamma_{\psi_m,\psi_{m'}^\HH}$ with $m'\neq0$, $m'>m$ and $m'-m=1$, $m \in \{1,...,M-2\}$. 
%For Hadamard $4$-bands wavelets, the three firt double lines, we display $\gamma_{\psi_0,\psi_{m'}^\HH}$ with $m'\neq0 \,,\,m'\in\{1,M-1\}$  and $\gamma_{\psi_m,\psi_0^\HH}$ with $m\neq0 \,,\,m\in\{1,M-1\}$ respectively. We then show,  $\gamma_{\psi_m,\psi_{m'}^\HH}$ with $m'\neq0$, $m'>m$, $m \in \{1,M-2\}$. 
\label{tab:simulsinterb}}}
%\end{center}
\end{table*}

\end{document}